\documentclass[11pt]{article}
\usepackage{amssymb, amsfonts, latexsym, amsthm, amsmath, verbatim, framed,cite,mathrsfs}
\usepackage{graphicx}
\usepackage{graphics}
\usepackage{calligra}
\usepackage{url}
\usepackage[margin=1in]{geometry}
\usepackage{tikz}
\usepackage{tabmac}
\usepackage{overpic}
\usepackage{titling}
\usepackage{mathtools}

\newcommand{\CC}{\mathbb{C}}
\newcommand{\RR}{\mathbb{R}}
\newcommand{\NN}{\mathbb{N}}
\newcommand{\ZZ}{\mathbb{Z}}

\newcommand{\anTLn}{n\widehat{\text{TL}_n}}
\newcommand{\kk}{m}
\newcommand{\anTLkn}{n\widehat{\text{TL}_{\kk n}}}
\newcommand{\inv}{\text{Inv}}
\newcommand{\len}{\ell}
\newcommand{\LL}{a}
\newcommand{\kconj}{k}
\newcommand{\bin}{\operatorname{b}}
\newcommand{\iso}{\chi}
\newcommand{\sd}{\iota}
\newcommand{\pp}{a}
\newcommand{\qq}{b}
\newcommand{\rr}{c}
\newcommand{\mm}{d}

\theoremstyle{plain}

\newtheorem{claim}{Claim}[section]
\newtheorem{lemma}[claim]{Lemma}

\newtheorem{thm}[claim]{Theorem}
\newtheorem{prop}[claim]{Proposition}

\newtheorem{cor}[claim]{Corollary}

\theoremstyle{definition}
\newtheorem{Def}[claim]{Definition}
\newtheorem{rmk}[claim]{Remark}
\newtheorem{ex}[claim]{Example}
\newtheorem*{exnonum}{Example}


\title
{Applying Parabolic Peterson: Affine Algebras and the Quantum Cohomology of the Grassmannian}

\makeatletter
\renewcommand\@date{{%
		\vspace{-\baselineskip}%
		\large\centering
		\begin{tabular}{@{}c@{}}
			Jonathan Cookmeyer\\
			University of California\\
			366 LeConte Hall MC 7300\\
			Berkeley, CA 94720\\
			jcookmeyer@berkeley.edu
		\end{tabular}%
		\qquad\qquad
		\begin{tabular}{@{}c@{}}
			Elizabeth Mili\'cevi\'c\\
			Haverford College\\
			370 Lancaster Avenue\\
			Haverford, PA 19041\\
			emilicevic@haverford.edu
		\end{tabular}

}}
\makeatother

\providecommand{\keywords}[1]{\textbf{Keywords.} #1}
\providecommand{\ack}[1]{\textbf{Funding.} #1}
\providecommand{\subjclass}[1]{\textbf{2010 MSC Codes.} #1}

\begin{document}

\maketitle

\begin{abstract}{
	The Peterson isomorphism relates the homology of the affine Grassmannian to the quantum cohomology of any flag variety.  In the case of a partial flag, Peterson's map is only a surjection, and one needs to quotient by a suitable ideal on the affine side to map isomorphically onto the quantum cohomology.  We provide a detailed exposition of this parabolic Peterson isomorphism in the case of the Grassmannian of $m$-planes in complex $n$-space, including an explicit recipe for doing quantum Schubert calculus in terms of the appropriate subset of non-commutative $k$-Schur functions.  As an application, we recast Postnikov's affine approach to the quantum cohomology of the Grassmannian as a consequence of parabolic Peterson by showing that the affine nilTemperley-Lieb algebra arises naturally when forming the requisite quotient of the homology of the affine Grassmannian.}
\end{abstract}


\tableofcontents

\vskip 12pt

{

\noindent
\keywords{Peterson isomorphism, quantum cohomology, non-commutative $k$-Schur functions, Grassmannian, affine nilTemperley-Lieb algebra, Schubert calculus}
\\

\noindent
\subjclass{ Primary 14M15, 05E05; Secondary 20F55, 14N15, 14N35}
\\

\noindent
\ack{The second author was partially supported by NSF grant DMS--1600982.}

}

\section{Introduction}

The theory of quantum cohomology was developed in the early 1990s by physicists working in the field of superstring theory, who were able to provide a partial answer to the Clemens conjecture counting the number of rational curves of a given degree on a general quintic threefold. A mathematical formulation of quantum cohomology pioneered by Givental formally proved the answer proposed by the physicists \cite{Givental}, and simultaneously introduced the Gromov-Witten invariants, which encode all such statistics in enumerative geometry, in joint work with Kim \cite{GiventalKim}. The (small) quantum cohomology ring is a deformation of the classical cohomology by a sequence of quantum parameters $q$, and the Schubert basis elements then form a $\ZZ[q]$-basis for the quantum cohomology.  The theory of quantum Schubert calculus seeks non-recursive, positive combinatorial formulas for expressing the quantum product of two Schubert classes in terms of the Schubert basis.

The Grassmannian has emerged as a favorite test case among all homogeneous spaces for carrying out the quantum Schubert calculus program.  In this paper, we specialize to the case of the type $A$ Grassmannian $Gr(m,n)$, where the Schubert classes are indexed by minimal length coset representatives in $S_n/(S_m\times S_{n-m})$, or equivalently by Young diagrams fitting inside an $m\times (n-m)$ rectangle.  The product of two quantum Schubert classes is a $\ZZ_{\geq 0}[q]$-linear combination of the Schubert basis elements, and the associated quantum Littlewood-Richardson coefficients have been well-studied. Already by the late 1990s, various recursive and signed rules for computing quantum Schubert products appeared, such as the quantum Pieri rule of Bertram \cite{Bertram} and the rim hook rule of Bertram, Ciocan-Fontanine, and Fulton \cite{BCFF}. A positive combinatorial formula for all quantum Littlewood-Richardson coefficients has now been proved by Buch, Kresch, Purbhoo, and Tamvakis via counting puzzles in a related two-step flag variety \cite{BKPT}.

In a series of lectures delivered at MIT in the late 1990s, Peterson developed a deep connection between the equivariant quantum cohomology of any partial flag variety $G/P$ and the equivariant homology of the affine Grassmannian $\mathcal{G}r_G$ \cite{Pet}.  This result, later proved by Lam and Shimozono \cite{LS10}, says that there is a surjective homomorphism of Hopf algebras 
\[ H_*^T(\mathcal{G}r_G) \twoheadrightarrow QH^*_T(G/P),\]
up to localization.  In the case of the complete flag variety $G/B$, the map above is in fact an isomorphism, and otherwise, one needs to mod out by a suitable ideal $J_P$ on the affine side in order to map injectively onto the smaller ring $QH^*_T(G/P)$.  Somewhat surprisingly, Peterson's isomorphism also admits a non-equivariant shadow, and it is one primary goal of this paper to provide a detailed account of the parabolic Peterson isomorphism for the type $A$ Grassmannian, which says that after a suitable localization,
\[ H_*(\mathcal{G}r_{SL_n})/J_P \cong QH^*(Gr(m,n)).\]

In the type $A$ setting, the Schubert classes in the homology of the affine Grassmannian were shown by Lam to be represented by the $k$-Schur functions of Lapointe and Morse \cite{LMkSchur}.  The approach in \cite{LamkSchur} proves that the non-commutative $k$-Schur functions defined in \cite{LamAJM} span the affine Fomin-Stanley subalgebra $\mathbb B$ of the nilHecke ring of Kostant and Kumar \cite{KK}, and in turn that $H_*(\mathcal{G}r_{SL_n}) \cong \mathbb B$.  The non-commutative $k$-Schur functions are indexed by minimal length elements in the quotient of the affine symmetric group $\tilde S_n/S_n$.  Any such $w \in \tilde S_n/S_n$ corresponds bijectively to a $k$-bounded partition $\lambda(w)$ having first part no larger than $k=n-1$.  Affine Schubert calculus, the fundamental goal of which is to combinatorially describe the multiplication of two affine Schubert classes, as modeled by the non-commutative $k$-Schur functions ${\bf s}_{\lambda(w)}^{(k)}$ or some other variation, has been a booming mathematical enterprise for well over a decade; see the book \cite{kschur} for a comprehensive survey of this literature.

Despite the splash the Peterson isomorphism originally made by bridging the areas of affine and quantum (co)homology, much of the subsequent Schubert calculus literature has remained squarely \emph{either} affine \emph{or} quantum, with comparatively fewer results utilizing the precise connection between the two.  Even in cases where the Peterson isomorphism plays a prominent role, such as in Lam and Shimozono's rational substitution from quantum Schubert polynomials to $k$-Schur functions via the Toda lattice \cite{LSToda}, or Kirillov and Maeno's interpretation of quantum Schubert calculus in terms of a family of braided differential operators \cite{KMbraid}, the focus is often on the case of the complete flag variety, for which the Peterson isomorphism is simplest to state.  Beyond its original appearance in \cite{LS10}, the parabolic Peterson isomorphism plays a more selective role; see \cite{LNSSS}, \cite{HeLam}, and \cite{CMPparpet} for several works which do prominently directly feature the parabolic version of the ``quantum equals affine'' phenomenon.

\subsection{Statement of the main results}

In this paper, we provide a concrete recipe for doing quantum Schubert calculus in the type $A$ Grassmannian in terms of the $j$-basis of non-commutative $k$-Schur functions in a quotient of the affine Fomin-Stanley algebra.  We also present examples of results in the literature on the quantum cohomology of the Grassmannian which can then be viewed as direct consequences of the parabolic Peterson isomorphism.

The first part of this paper is devoted to explaining how to perform calculations in the ring $QH^*(Gr(m,n))$ using the parabolic Peterson isomorphism, which we suitably rephrase in Theorem \ref{mainintro} for this purpose.  Because the quantum cohomology of the Grassmannian is especially well-understood, our hope is that this paper can serve as a self-contained introduction to the Peterson isomorphism for readers less familiar with the language of affine flag varieties, affine Weyl groups, affine root systems, and general reductive algebraic groups.  We provide a comprehensive user's manual for the case of the Grassmannian as a means to pave the way for similar applications to other flag varieties, in which many interesting questions in quantum Schubert calculus remain open.  The following statement combines Propositions \ref{braid} and \ref{piP} with Theorem \ref{GrPeterson}.

\begin{thm}\label{mainintro}
	Let $P$ be the maximal parabolic subgroup such that $SL_n(\CC)/P \cong Gr(m,n)$, and let $v$ be any minimal length coset representative in $S_n/(S_m \times S_{n-m})$.  The Peterson isomorphism gives a correspondence between (representatives of) non-commutative $k$-Schur functions and quantum Schubert classes:
	\begin{equation}
	\begin{aligned}
	\Psi_P : ( H_*(\mathcal{G}r_{SL_n})/ J_P)[ ({\bf \overline{s}}_{\lambda(u)}^{(k)})^{-1}]& \longrightarrow QH^*(Gr(\kk,n))[q^{-1} ] \\
	{\bf \overline{s}}_{\lambda(v u^r)}^{(k)} &\longmapsto q^{-r} \sigma_v,
	\end{aligned}
	\end{equation}
	where 
	\begin{equation}
	u = s_{\kk,\kk+1,...,n-1,\kk-1,...,1,0}
	\end{equation}
	is an element of the affine symmetric group $\tilde{S}_n$ whose corresponding $k$-bounded partition $\lambda(u)$ is a hook shape.  Here, $r$ is any positive integer such that $vu^r$ is a minimal length coset representative in $\tilde S_n/S_n$.  Moreover, if $w \in \tilde S_n/S_n$ supports a braid relation, then ${\bf s}_{\lambda(w)}^{(k)} \in J_P$ so that ${\bf \overline{s}}_{\lambda(w)}^{(k)} = 0$.\end{thm}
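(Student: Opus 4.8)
The plan is to assemble the theorem from the three results it collects, pinning down the element $u$ and the power of $q$ along the way. The substantive input is the parabolic Peterson isomorphism itself, Theorem \ref{GrPeterson}: it produces the map $\Psi_P$ between the two localized rings and, for each affine Grassmannian coset $w$ whose non-commutative $k$-Schur representative survives the quotient by $J_P$, identifies $\Psi_P({\bf \overline{s}}_{\lambda(w)}^{(k)})$ as a power $q^{d_P(w)}$ times the single quantum Schubert class $\sigma_{\pi_P(w)}$, where $\pi_P$ is the parabolic projection of Proposition \ref{piP} and $d_P$ is the attached ``$q$-shift''. Taking all of this as given, the work is to make the bookkeeping explicit and to read off the displayed formula.

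First I would analyze $u = s_{m}s_{m+1}\cdots s_{n-1}\,s_{m-1}\cdots s_{1}s_{0}$. Its given word uses each of the $n$ affine simple reflections exactly once, so it is a Coxeter element, hence reduced, and $\ell(u) = n$; consequently the associated $(n-1)$-bounded partition $\lambda(u)$ has size $n$, and running the word through the standard bijection between minimal length cosets in $\tilde{S}_n/S_n$ and $(n-1)$-bounded partitions (via $n$-cores) identifies $\lambda(u)$ as a hook. The point to check is that $u$ is a minimal length coset representative that survives the quotient, with $\pi_P(u) = e$ and $d_P(u) = -1$, so that $\Psi_P({\bf \overline{s}}_{\lambda(u)}^{(k)}) = q^{-1}$. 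Since $q^{-1}$ is a unit, ${\bf \overline{s}}_{\lambda(u)}^{(k)}$ is a non-zero-divisor in $H_*(\mathcal{G}r_{SL_n})/J_P$, the localization on the left of the display is legitimate, and $\Psi_P$ extends to the asserted isomorphism onto $QH^*(Gr(m,n))[q^{-1}]$.

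Next I would establish ${\bf \overline{s}}_{\lambda(vu^r)}^{(k)} \mapsto q^{-r}\sigma_v$ for $v$ a minimal length representative of $S_n/(S_m\times S_{n-m})$ and $r \geq 1$ chosen so that $vu^r$ is a minimal length coset representative in $\tilde{S}_n/S_n$. Here Proposition \ref{piP} supplies what is needed: right multiplication by $u$ increments by one the ``number of wraps'' recorded by $d_P$ while leaving $\pi_P$ fixed, so $\pi_P(vu^r) = v$ and $d_P(vu^r) = -r$ (equivalently $\ell(vu^r) = \ell(v) + rn$). Feeding these into the formula from Theorem \ref{GrPeterson}, together with $\Psi_P({\bf \overline{s}}_{\lambda(u)}^{(k)}) = q^{-1}$ from the previous step, yields the displayed correspondence. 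The final assertion is exactly Proposition \ref{braid}: if $w \in \tilde{S}_n/S_n$ supports a braid relation then ${\bf s}_{\lambda(w)}^{(k)} \in J_P$, hence ${\bf \overline{s}}_{\lambda(w)}^{(k)} = 0$.

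Granting Theorem \ref{GrPeterson}, I expect the main obstacle to be the combinatorial reconciliation in the middle steps: verifying that $u$ — named here by an explicit affine word — really is the element realizing $q^{-1}$ under Peterson (equivalently, that $\lambda(u)$ is the expected hook and that iterating $u$ matches Postnikov's cylindric rim-hook operation on $(n-1)$-bounded partitions), and that the correspondence $v \leftrightarrow \lambda(v)$ on the quantum side is compatible with $\pi_P$. Were Theorem \ref{GrPeterson} itself not available, the genuine difficulty would instead be proving the parabolic Peterson isomorphism — identifying $J_P$ as the kernel of Peterson's surjection and establishing the isomorphism of localized rings — which rests on the nilHecke ring and the affine Fomin--Stanley algebra rather than on the bookkeeping above.
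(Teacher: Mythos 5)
Your proposal is correct and follows essentially the same route as the paper, which obtains Theorem \ref{mainintro} precisely by combining Theorem \ref{GrPeterson} (the substantive input), Proposition \ref{piP} (identifying $u=\pi_P(t_{-\theta})$ with the stated reduced word and hook shape), and Proposition \ref{braid} (braids lie in $J_P$), together with the identification $j^0_w={\bf s}^{(k)}_{\lambda(w)}$ from Theorem \ref{jBasiskSchur}, which you use implicitly. The only cosmetic difference is your ad hoc notation $d_P(w)$, $\pi_P(w)$ for the $q$-exponent and finite part, which repackages rather than changes the paper's bookkeeping.
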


We illustrate the ideal $J_P$ corresponding to $Gr(1,3)$ in Figure \ref{Fig:J_PExIntro}. See Section \ref{Sec:review} for the definitions of the terms appearing in the statement of Theorem \ref{mainintro}, the proof of which is found in Section \ref{PetIso}.

\begin{figure}[h]
	\begin{centering}
\begin{overpic}[width=0.62\linewidth]{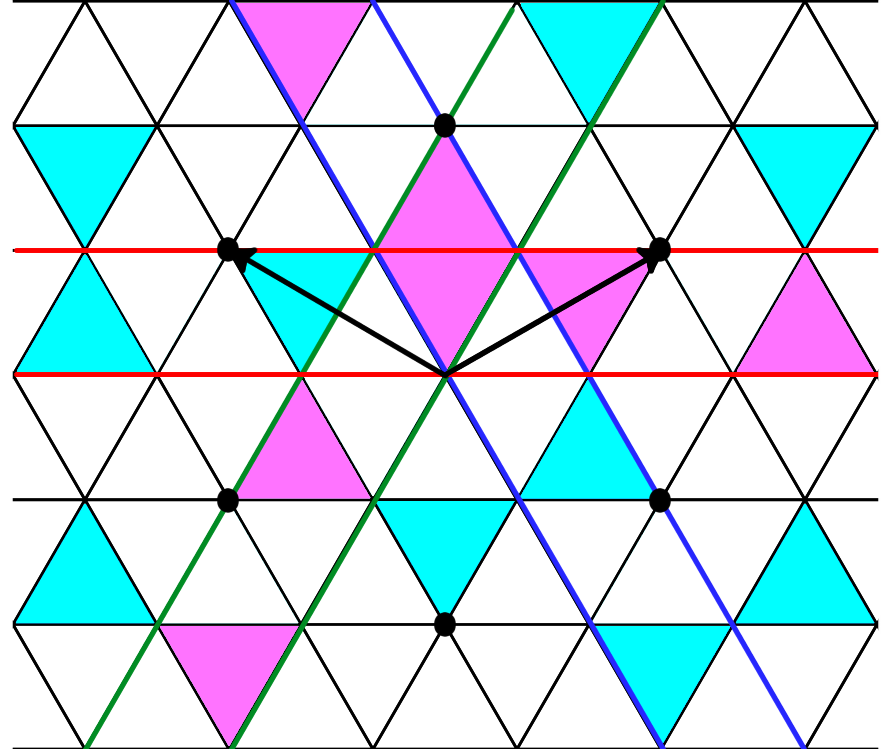}
	\put(49,50){\bf \large $1$}
	\put(40,50){\bf \large $s_1$}
	\put(56,50){\bf \large $s_2$}
	\put(39,37){\bf \large $s_{12}$}
	\put(47,32){\bf \large $s_{121}$}
	\put(46,24){\bf \large $s_{1210}$}
	\put(56,37){\bf \large $s_{21}$}
	\put(29,58){\bf \large $ \alpha_2$}
	\put(78,58){\bf \large $ \alpha_1$}
	\put(48,60){\bf \large $ s_0$}
	\put(-12,55){\bf \LARGE $s_0  \circlearrowright$}
	\put(67,-6){\bf \LARGE $ s_1  \circlearrowright$}
	\put(16,-6){\bf \LARGE $ s_2  \circlearrowright$}
	\put(34,53){\bf \large $ s_{10}$}
	\put(61,53){\bf \large $ s_{20}$}
	\put(31,32){\bf \large $ s_{120}$}
\end{overpic}
\vskip 30pt
\caption{The elements of $J_P$ for the parabolic $P$ corresponding to $Gr(1,3)$ are colored blue. The minimal length coset representatives which do not lie in $J_P$ are colored pink.}\label{Fig:J_PExIntro} 
\end{centering}
\end{figure}

 We now demonstrate how to use Theorem \ref{mainintro} by computing a product in $QH^*(Gr(1,3))$ for illustration purposes; this is an abbreviated version of Example \ref{PetEx} in the body of the paper, where all the relevant terminology has been properly defined.

\begin{exnonum} We use the parabolic Peterson isomorphism to calculate the product $\sigma_{\tiny \tableau[Ys]{& \ }}*\sigma_{\tiny \tableau[Ys]{ & \ }}$ in $QH^*(Gr(1,3))$.  First, note that
\begin{equation*}
\overline{\bf s}_{\tiny \tableau[Ys]{\ }}^{(k)} = \overline{\bf s}^{(k)}_{\lambda(s_{0})}=\overline{\bf s}^{(k)}_{\lambda(s_{21} \cdot s_{120})} \longmapsto q^{-1}\sigma_{\tiny \tableau[Ys]{& \ }}.
\end{equation*}
Therefore, we should compute the following product of non-commutative $k$-Schur functions:
\begin{equation*}
\begin{aligned}
{\bf s}_{\tiny \tableau[Ys]{\ }}^{(k)} \cdot {\bf s}_{\tiny \tableau[Ys]{\ }}^{(k)} &= (A_{0} + A_1 + A_2)^2 \\&=
A_{01}+A_{02}+A_{10} + A_{12}+ A_{20} + A_{21}  \\&= 
{\bf s}^{(k)}_{\lambda(s_{10})} + 
{\bf s}^{(k)}_{\lambda(s_{20})} \\
\implies \overline{\bf s}_{\tiny \tableau[Ys]{\ }}^{(k)} \cdot \overline{\bf s}_{\tiny \tableau[Ys]{\ }}^{(k)} & \equiv \overline{\bf s}_{\lambda(s_{20})}^{(k)} \operatorname{mod} J_P \ = \overline{\bf s}_{\tiny \tableau[Ys]{\\ \ }}^{(k)}.
\end{aligned}
\end{equation*}
The pink alcoves in Figure \ref{Fig:J_PExIntro} are the elements of $\tilde S_3/S_3$ which lie outside of $J_P$ and can thus be used to calculate in $QH^*(Gr(1,3))$.  Figure \ref{Fig:J_PExIntro} also shows those elements of $\tilde S_3/S_3$ which are in $J_P$ in blue; note that one of the two terms in the expansion of this product, namely ${\bf s}_{\lambda(s_{10})}^{(k)} \in J_P$, does not survive the quotient in the parabolic Peterson.  

Finally, we need only map this product of $k$-Schurs under the Peterson isomorphism:
\begin{equation*}
\overline{\bf s}^{(k)}_{\tiny \tableau[Ys]{\\ \ }} =\overline{\bf s}^{(k)}_{\lambda(s_{20})}= \overline{\bf s}^{(k)}_{\lambda(s_1 \cdot s_{120})} \longmapsto q^{-1}\sigma_{\tiny \tableau[Ys]{ \ }}.
\end{equation*}
Putting these results together, we have shown that
\begin{equation*}
q^{-1}\sigma_{\tiny \tableau[Ys]{& \ } }* q^{-1}\sigma_{\tiny \tableau[Ys]{& \ }} \longmapsto \overline{\bf s}_{\tiny \tableau[Ys]{\ }}^{(k)} \cdot \overline{\bf s}_{\tiny \tableau[Ys]{\ }}^{(k)}\equiv \overline{\bf s}_{\tiny \tableau[Ys]{\\ \ }}^{(k)}\longmapsto q^{-1} \sigma_{\tiny \tableau[Ys]{\ }}.
\end{equation*}
Multiplying by $q^2$ to clear denominators, we have calculated that in $QH^*(Gr(1,3)),$
\begin{equation*} \sigma_{\tiny \tableau[Ys]{& \ }}*\sigma_{\tiny \tableau[Ys]{ & \ }}  = q \sigma_{\tiny \tableau[Ys]{\ }}.
\end{equation*}
\end{exnonum}

Our primary motivation in providing a careful analysis of the parabolic Peterson isomorphism, however, was not simply to reproduce formulas for Schubert products in $QH^*(Gr(m,n))$.  We further aim to demonstrate that many existing results about the quantum cohomology of the Grassmannian which were discovered independently can actually be reinterpreted as corollaries of the parabolic Peterson isomorphism.  To provide a concrete example, our primary application in Theorem \ref{PostIntro} recasts Postnikov's affine approach to quantum Schubert calculus from \cite{P05} as a consequence of Theorem \ref{mainintro}.  

Inspired by the interpretation of a localization of the quantum cohomology of the Grassmannian as a representation of the affine Lie algebra $\widehat{\mathfrak{sl}}_n$, Postnikov proves in \cite{P05} that a localized subalgebra of the affine nilTemperley-Lieb algebra, generated by certain distinguished elements ${\bf \tilde e}^n_r$ and ${\bf \tilde h}^n_r$, is isomorphic to the localization $QH^*(Gr(m,n))[q^{-1}]$.  Moreover, under this isomorphism, the generators ${\bf \tilde e}^n_r$ and ${\bf \tilde h}^n_r$ map to the Schubert classes $\sigma_{(1^r)}$ and $\sigma_{(r)}$, respectively. In Section \ref{Sec:nTL}, we prove that Postnikov's isomorphism is actually the composition of the parabolic Peterson isomorphism in Theorem \ref{mainintro}, followed by two duality isomorphisms, which should be thought of as corrections addressing the fact that the parabolic Peterson isomorphism itself does \emph{not} actually directly map ${\bf \tilde h}_r \mapsto \sigma_{(r)}$.  The following result combines Theorems \ref{modpet} and \ref{algiso} with Corollary \ref{decomp} in the body of the paper.

\begin{thm}\label{PostIntro}
	Let $P$ be the maximal parabolic subgroup such that $SL_n(\CC)/P \cong Gr(m,n)$.  The subalgebra $\mathbb X$ of the localized affine nil-Templerley-Lieb algebra $\anTLkn$ generated by ${\bf \tilde e}^n_r$ and/or ${\bf \tilde h}^n_r$ is isomorphic to the quotient of the affine Fomin-Stanley algebra $\mathbb B$ appearing in Theorem \ref{mainintro}:
	\begin{equation}
	\iso: ( H_*(\mathcal{G}r_{SL_n})/ J_P)[ ({\bf \overline{s}}_{\lambda(u)}^{(k)})^{-1}] \longrightarrow \mathbb X,
	\end{equation}
	and the isomorphism $\iso$ is induced by the natural projection $\mathbb B \longrightarrow \mathbb X$ killing any element which supports a braid relation.
	
	Let $T$ be the transpose map and $\sd$ the strange duality involution reviewed in Theorem \ref{SD}. The following composition is also an isomorphism and maps the generators as follows:
	\begin{equation}
	\begin{aligned}
	T \circ \sd \circ \Psi_P : ( H_*(\mathcal{G}r_{SL_n})/ J_P)[ ({\bf \overline{s}}_{\lambda(u)}^{(k)})^{-1}] &\longrightarrow QH^*(Gr(n-\kk,n))[q^{-1}]\\
	{\bf \tilde e}_r &\longmapsto \sigma_{(1^r)}\\
	{\bf \tilde h}_r &\longmapsto \sigma_{(r)}.
	\end{aligned}
	\end{equation}
	Therefore, if instead $P'$ is the maximal parabolic subgroup such that $SL_n(\CC)/P' \cong Gr(n-m,n)$, then Postnikov's isomorphism is in fact the composition of these isomorphisms
	\begin{equation}
	T \circ \sd \circ \Psi_{P'} \circ \iso^{-1}: \mathbb X \longrightarrow QH^*(Gr(\kk,n))[q^{-1}],
	\end{equation}
	providing an independent proof of \cite[Proposition 8.5]{P05}, which we restate in Theorem~\ref{PostIso}.
\end{thm}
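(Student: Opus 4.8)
The plan is to prove the three assertions of Theorem~\ref{PostIntro} in sequence: (1) that the localized quotient $(H_*(\mathcal{G}r_{SL_n})/J_P)[(\overline{\bf s}^{(k)}_{\lambda(u)})^{-1}]$ is realized inside the localized affine nilTemperley--Lieb algebra $\anTLkn$ via an isomorphism $\iso$ coming from a projection that kills braid-supporting elements; (2) that post-composing the parabolic Peterson isomorphism with the transpose $T$ and the strange duality involution $\sd$ sends ${\bf \tilde e}^n_r \mapsto \sigma_{(1^r)}$ and ${\bf \tilde h}^n_r \mapsto \sigma_{(r)}$ inside $QH^*(Gr(n-\kk,n))[q^{-1}]$; and (3) that these two facts together force Postnikov's isomorphism to be the stated composition. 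The strategy is to push as much as possible to the affine side, where everything is governed by reduced words in $\tilde S_n$.

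\textbf{Step 1.} There is a canonical algebra surjection $\pi$ from the affine nilCoxeter algebra onto $\anTLn$ obtained by imposing $A_iA_{i\pm1}A_i=0$; it sends $A_w$ to $0$ exactly when $w$ supports a braid relation and sends the remaining $A_w$ to a linear basis of $\anTLn$. I would restrict $\pi$ to the affine Fomin--Stanley subalgebra $\mathbb B\cong H_*(\mathcal{G}r_{SL_n})$ and show that $\ker(\pi|_{\mathbb B})=J_P$. The inclusion $J_P\subseteq\ker(\pi|_{\mathbb B})$ amounts to checking that $\pi({\bf s}^{(k)}_{\lambda(w)})=0$ whenever $w$ supports a braid relation; this uses the unitriangular expansion of the non-commutative $k$-Schur functions on the $A_w$ together with the vanishing ${\bf \overline s}^{(k)}_{\lambda(w)}=0$ already recorded in Theorem~\ref{mainintro}. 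The reverse inclusion then follows because, by the same unitriangularity, the images $\{\pi({\bf s}^{(k)}_{\lambda(w)}):w\text{ not supporting a braid}\}$ are linearly independent in $\anTLn$, so $\ker(\pi|_{\mathbb B})$ can be no larger than the span of the braid-supporting $k$-Schur functions, which is $J_P$. This identifies $\mathbb B/J_P$ with the (unlocalized) image $\pi(\mathbb B)$. Since $\mathbb B$ is generated by the non-commutative homogeneous functions ${\bf h}^{(k)}_1,\dots,{\bf h}^{(k)}_{n-1}$, each a sum of the $A_w$ over cyclically decreasing $w$ of the appropriate length, and $\pi$ carries these precisely to Postnikov's generators ${\bf \tilde h}^n_r$, the image $\pi(\mathbb B)$ is the subalgebra generated by the ${\bf \tilde h}^n_r$, equivalently by the ${\bf \tilde e}^n_r$. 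Finally, I would check that $\pi$ sends $\overline{\bf s}^{(k)}_{\lambda(u)}$ to the distinguished central element of $\anTLn$ whose inversion defines $\anTLkn$, so that localizing the source yields precisely $\mathbb X$; the resulting map is the isomorphism $\iso$, manifestly induced by $\pi$ and killing braid-supporting elements.

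\textbf{Step 2.} Here I would track the generators through $T\circ\sd\circ\Psi_P$. Using $\iso^{-1}$, the elements ${\bf \tilde e}^n_r$ and ${\bf \tilde h}^n_r$ pull back to the classes of the hook-indexed non-commutative $k$-Schur functions ${\bf e}^{(k)}_r$ and ${\bf h}^{(k)}_r$ in $\mathbb B/J_P$; writing these as $\overline{\bf s}^{(k)}_{\lambda(vu^r)}$ for the appropriate minimal coset representatives $v$ (the hook $\lambda(u)$ and its single-row and single-column relatives) and applying Theorem~\ref{mainintro} yields $q$-power multiples of the Schubert classes $\sigma_{(1^r)}$ and $\sigma_{(r)}$ in $QH^*(Gr(\kk,n))[q^{-1}]$ --- but with the roles of rows and columns interchanged relative to what Postnikov wants, which is exactly why the transpose $T$ and the strange duality $\sd$ of Theorem~\ref{SD} must be applied, after which one lands in $QH^*(Gr(n-\kk,n))[q^{-1}]$ with ${\bf \tilde e}^n_r\mapsto\sigma_{(1^r)}$ and ${\bf \tilde h}^n_r\mapsto\sigma_{(r)}$. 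This is a finite bookkeeping computation, carried out in Theorems~\ref{modpet} and~\ref{algiso}.

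\textbf{Step 3.} With Steps 1 and 2 in hand, the conclusion is essentially formal. Postnikov's isomorphism of Theorem~\ref{PostIso} is the unique algebra homomorphism $\mathbb X\to QH^*(Gr(\kk,n))[q^{-1}]$ sending ${\bf \tilde e}^n_r\mapsto\sigma_{(1^r)}$ and ${\bf \tilde h}^n_r\mapsto\sigma_{(r)}$, since these elements generate both source and target; hence it must coincide with $T\circ\sd\circ\Psi_{P'}\circ\iso^{-1}$, where $P'$ is the maximal parabolic with $SL_n(\CC)/P'\cong Gr(n-\kk,n)$, giving an independent proof of \cite[Proposition 8.5]{P05}. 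I expect the main obstacle to lie in Step~1, namely pinning down that $J_P$ is exactly the span of the braid-supporting $k$-Schur functions and verifying the unitriangularity that makes $\iso$ both well-defined and injective, together with the duality bookkeeping of Step~2: it is precisely the mismatch between $\sigma_{(r)}$ and $\sigma_{(1^r)}$, and the passage from $Gr(\kk,n)$ to $Gr(n-\kk,n)$, that the corrections $T$ and $\sd$ exist to repair, so these identifications must be made with care.
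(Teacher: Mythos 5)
There is a genuine gap in your Step 1, and it sits exactly where you predicted the main obstacle would be. You propose to show that the kernel of the projection $\pi:\mathbb B\to\anTLn$ (the \emph{unlocalized} affine nilTemperley--Lieb algebra) equals $J_P$, on the premise that $J_P$ is the span of the braid-supporting non-commutative $k$-Schur functions. That premise is false: Proposition \ref{braid} only gives one inclusion (braid-supporting $\Rightarrow$ not in $\tilde S_n^P$), and $J_P$ is strictly larger. Already for $Gr(1,3)$ the element $s_{10}$ has length two, supports no braid relation, yet lies in $\tilde S_3^0\setminus\tilde S_3^P$, so $j^0_{s_{10}}={\bf \tilde h}_2\in J_P$ while $\pi({\bf \tilde h}_2)=a_1a_0+a_2a_1+a_0a_2\neq 0$ in $n\widehat{\text{TL}_3}$ (see Figure \ref{Fig:J_PExIntro} and Lemma \ref{hJP}, which shows more generally that ${\bf \tilde h}_r\in J_P$ for all $r>m$). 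Hence $\ker(\pi|_{\mathbb B})\neq J_P$, and the unitriangularity argument you sketch cannot close this gap because it only detects the braid relations, not the rest of $J_P$.

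What actually makes the paper's Theorem \ref{algiso} work is the \emph{localization}: the surjection $p$ is taken into $n\widehat{TL_{n-\kk,n}}=\anTLn[{\bf z}^{-1}]$, where the extra elements ${\bf \tilde h}_r^n$ (for $r$ out of range) and ${\bf \tilde e}_r^n$ become zero, and the claim is $\ker(p)=J_P[(j^0_{\pi_P(t_{-\theta})})^{-1}]$. The easy inclusion $\ker(p)\subseteq J_P[\cdots]$ does follow from unitriangularity of the $j$-basis plus Proposition \ref{braid}, as you suggest. But the reverse inclusion $J_P[\cdots]\subseteq\ker(p)$ is the hard content: the paper proves it by contradiction on a minimal-length $j^0_w\in J_P$ not in $\ker(p)$, splitting into three cases according to the rightmost occurrence of $s_\kk$ in a reduced word for $w$ (guaranteed to exist by Lemma \ref{nosk}), and in each case exhibiting $j^0_w$ as a combination of terms divisible by some ${\bf \tilde h}_r$ or ${\bf \tilde e}_r$ that dies in the localization (via Lemmas \ref{hdecomp}, \ref{hJP}, \ref{wred} and $k$-conjugation). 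None of this machinery appears in your proposal, and without it the isomorphism $\iso$ is not established. Your Steps 2 and 3 are essentially sound in outline and match Theorem \ref{modpet} and Corollary \ref{decomp} (modulo the small inaccuracy that $\Psi_P$ sends ${\bf \tilde h}_r$ to a $q^{-1}$-multiple of a \emph{hook} class, not merely a row/column swap, which is precisely why $\sd$ is needed before $T$), but they both depend on Step 1 being repaired.
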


\subsection{Directions for future work}

We provide several examples of results concerning $QH^*(Gr(m,n))$ which would be interesting to understand through the Peterson lens, and for which we expect that the machinery developed in this paper might be especially useful.  In \cite{P05}, Postnikov argues that the affine nilTemperley-Lieb algebra is a Grassmannian analog of Fomin-Kirillov's quadratic algebra from \cite{FK}, in which the Dunkl elements encode the multiplication in $QH^*(SL_n(\CC)/B)$.  Reverse engineering the analog of the subalgebra of the affine nilTemperley-Lieb algebra considered in Theorem \ref{PostIntro} is thus a likely first step toward extending Fomin and Kirillov's approach to other partial flags. 

Lee has proved  that cylindric skew Schur functions are cylindric Schur positive, and that these expansions contain all Gromov-Witten invariants, using a result of Lam from \cite{LamAJM} that the cylindric skew Schur functions are precisely those affine Stanley symmetric functions which are indexed by permutations which do not support braid relations \cite{LeeCylindric}. It would be interesting to further explore these connections between the cylindric Schur functions of Postnikov and the affine Stanley symmetric functions indexed by 321-avoiding affine permutations, perhaps using the crystal structure developed by Morse and Schilling \cite{MorseSchilling}.

\subsection{Organization of the paper}

Section \ref{Sec:review} provides a self-contained introduction to the Peterson isomorphism, including all necessary background on the relevant quantum and affine Schubert calculus, the combinatorics of the associated Weyl groups, and finite and affine root systems. The reader familiar with this background may prefer to consult the published version which provides a condensed treatment of Section \ref{Sec:review}.

We open in Section \ref{Sec:QSch} with a review of quantum Schubert calculus for the Grassmannian, including indexing the Schubert basis and the Pieri rule for multiplying quantum Schubert classes.  We also define the duality isomorphisms appearing in Theorem \ref{PostIntro}. In Section \ref{Sec:HomAffGr}, we review affine Schubert calculus as modeled by the non-commutative $k$-Schur functions.  Following \cite{LamkSchur}, we discuss how the affine Fomin-Stanley algebra $\mathbb B$ generated by the non-commutative homogeneous symmetric functions ${\bf \tilde h_r}$ is isomorphic to the homology of the affine Grassmannian, and that the $k$-Schur basis maps to the affine Schubert basis in $H_*(\mathcal{G}r_{SL_n})$.  Section \ref{Sec:Peterson} provides a self-contained exposition of the Peterson isomorphism in type $A$ both for the complete flag variety and for any partial flag, including the relevant affine root system vocabulary.  

Our results are both formally stated and proved in Sections \ref{PetIso} and \ref{Sec:nTL}.  In Section \ref{PetIso}, we prove Theorem \ref{mainintro}, which specializes the statement of the parabolic Peterson isomorphism to specifically recover the quantum cohomology of the Grassmannian.  As an application, we prove Theorem \ref{PostIntro} in Section \ref{Sec:nTL}, revealing that Postnikov's approach to the quantum cohomology of the Grassmannian via the affine nilTemperley-Lieb algebra is a direct consequence of Theorem \ref{mainintro}.

\subsection*{Acknowledgments}

This paper is an outgrowth of the senior thesis research project conducted by the first author and directed by the second author at Haverford College.  The authors thank Kaisa Taipale for useful discussions about parabolic Peterson and helpful comments on an earlier version of this manuscript, as well as Thomas Lam for a clarification about the ideal $J_P$. The authors wish to thank the anonymous referee for comments which helped to greatly streamline the exposition in the published version.  This project was conceived while the second author was in residence at the Max-Planck-Institut f\"{u}r Mathematik, and she is grateful to the institute for fostering excellent research conditions. The second author was partially supported by NSF grant DMS--1600982.


\section{Background on Schubert Calculus} \label{Sec:review}

The majority of the material reviewed here is discussed in more detail in the book \cite{kschur}, which presents a comprehensive exposition based on the original papers. We provide references to the corresponding material in \cite{kschur} when possible throughout the section, and we refer the reader to the exposition in \cite{kschur} for the original citations.  For a more streamlined treatment of this background, we invite the reader familiar with the material in Section \ref{Sec:review} to consult the version of this paper published in the {\it Journal of Combinatorics}.


\subsection{Quantum cohomology of the Grassmannian}\label{Sec:QSch}

The \textbf{Grassmannian} of $m$-planes in $\CC^n$ is the set $Gr(m,n)$ of all $m$-dimensional subspaces of $\CC^n$.  This set has the geometric structure of a projective algebraic variety, as well as the topological structure of a CW-complex.  The decomposition of $Gr(m,n)$ into Schubert cells arises from a variation of the Bruhat decomposition on $SL_n(\CC)$.  In this case, the Schubert cells are indexed by minimal length coset representatives in the quotient $S_n/(S_m \times S_{n-m})$ of the symmetric group on $n$ letters, as are their closures.  The cohomology $H^*(Gr(m,n))$ admits a $\ZZ$-basis given by the classes of these Schubert varieties, and the multiplication in this ring carries information enumerating subvarieties satisfying certain incidence conditions.  The \textbf{quantum cohomology} of the Grassmannian is a ring $QH^*(Gr(m,n))$ which admits a $\ZZ[q]$-basis of Schubert classes again indexed by the elements of $S_n/(S_m \times S_{n-m})$.  The variable $q$ serves as a formal parameter which captures the degree of the curves passing through certain prescribed subvarieties.  In particular, setting $q=0$ recovers the classical cohomology $H^*(Gr(m,n))$.

\subsubsection{Schubert classes in the Grassmannian}

There are many equivalent ways to index the Schubert classes in the Grassmannian.  The symmetric group $S_n$ is generated by the {\bf simple transpositions} $s_i$ which interchange $i$ and $(i+1)$, which means that every element $w \in S_n$ can be written as a product $w = s_{i_1}\cdots s_{i_r}$ where $i_j \in \{1, \dots, n-1\}$.  These generators also satisfy the following relations:
\begin{equation}
\begin{aligned}
s_i^2 &=1\\
s_is_j &= s_js_i \ \text{ if } |i-j| >1, \\
s_is_{i+1}s_i &= s_{i+1}s_is_{i+1}. 
\end{aligned}
\end{equation}
When the product $w=s_{i_1}\cdots s_{i_r}$ is written using the minimum possible number of generators, we say this is a {\bf reduced expression} for $w$, and we define the {\bf length} of $w$ to be $\ell(w) = r$.  We frequently abbreviate $w=s_{i_1}\cdots s_{i_r}$ by recording only the subscripts $w = s_{i_1i_2\cdots i_r}$.

In the quotient $S_n/(S_m \times S_{n-m})$, generators of the subgroup $S_m \times S_{n-m}$ appearing on the right are absorbed, which means that minimal length coset representatives have the property that all reduced expressions must end in $s_m$.  As permutations in {\bf one-line notation} recording only the images of the elements $[n] = \{1, 2, \dots, n\}$ under the bijection on $[n]$, the minimal length coset representatives of $S_n/(S_m \times S_{n-m})$ will have the form $[a_1\ a_2\ \cdots \ a_m\ \vert\ a_{m+1} \cdots \ a_n]$, where $a_1 < a_2 < \cdots a_m$ and $a_{m+1} < \cdots < a_n$, with a single (possible) descent at $a_{m+1}> a_m$ in the $m^{\text{th}}$ position.  To each such permutation in one-line notation, we can associate a {\bf binary string}, or a {\bf $01$-word}, by recording whether each element of $[n]$ is in the right or left batch in the one-line notation.  More concretely, going through the elements of $[n]$ in order, record a 0 if the number is to the right of the vertical line at position $m$, and record a 1 if the number is to the left. Equivalently, this $01$-word traces out a {\bf Young diagram} which fits inside an $m \times (n-m)$ rectangle by starting in the bottom left corner and taking a right step for each 0 and an upward step for each 1, reading the $01$-word left to right.  A Young diagram is typically recorded as a {\bf partition}, or a weakly decreasing sequence of positive integers $\lambda \in \NN^m$ which lists the number of left-justified boxes in each row of the diagram recorded from top to bottom.

\begin{ex} Let $m=5$ and $n=9$.  Here is an example which illustrates the correspondences between the reduced expression, the one-line notation, the binary string recorded by the $01$-word, the Young diagram, and the partition for a permutation in $S_9/(S_5 \times S_4)$:
\begin{equation*}
\text{\Large $s_{124354765}$}  \ \ \longleftrightarrow \ \  [\textcolor{red}{2\ 3\ 5\ 6\ 8}\ \vert\ \textcolor{blue}{1\ 4\ 7\ 9} ] \ \  \longleftrightarrow \ \  \textcolor{blue}{0}\textcolor{red}{11}\textcolor{blue}{0}\textcolor{red}{11}\textcolor{blue}{0}\textcolor{red}{1}\textcolor{blue}{0} \ \  \longleftrightarrow \ \  {\scriptsize \tableau[Ys]{ & & \\ &  \\ & \\ \\   \ }}
\ \  \longleftrightarrow \quad (3,2,2,1,1). 
\end{equation*}
\end{ex}

The most common way to represent an arbitrary Schubert class in $QH^*Gr(m,n)$ is certainly as a Young diagram fitting inside an $m\times (n-m)$ rectangle or equivalently by the corresponding partition.  We thus denote by $\mathcal{P}_{mn}$ the set of all such diagrams.  Given $\lambda \in \mathcal P_{mn}$, when necessary we denote the parts of the corresponding partition by $\lambda = (\lambda_1, \dots, \lambda_m) \in \NN^m$, where the last few parts are permitted to equal zero.

\subsubsection{Quantum Schubert calculus}

In quantum Schubert calculus, the primary goal is to understand the product structure in the ring $QH^*(Gr(m,n))$.  Namely, given any two partitions $\lambda, \mu \in \mathcal P_{mn}$, we aim to expand the product
\begin{equation}
\sigma_\lambda * \sigma_\mu = \sum\limits_{\substack{\nu \in \mathcal P_{mn},\\ d \in \ZZ_{\geq 0}}} c_{\lambda, \mu}^{\nu, d}q^d\sigma_\nu
\end{equation}
in terms of the Schubert basis.  The {\bf quantum Littlewood-Richardson coefficients} $c_{\lambda, \mu}^{\nu, d} \in \ZZ_{\geq 0}$ are three-point genus zero Gromov-Witten invariants and are thus known to be nonnegative integers due to the questions in enumerative geometry which they answer.  A complete combinatorial quantum Littlewood-Richardson rule was given by Buch, Kresch, Purbhoo, and Tamvakis in \cite{BKPT} by counting the number of puzzles corresponding to a certain 2-step flag variety.  

For easy comparison with the examples discussed in this paper, we briefly review the quantum Pieri formula of \cite{Bertram}, which yields a recursive method for calculating any quantum product in $QH^*(Gr(m,n))$.  For the reformulation restated below, as well as for various discussions throughout the paper, we require the combinatorial language of hook shapes.  Any box in a Young digram has an associated \textbf{hook}, consisting of all boxes to the right and below the given box, including the box itself; see the diagram on the left in Figure \ref{fig:hooks}.  
\begin{figure}[h]
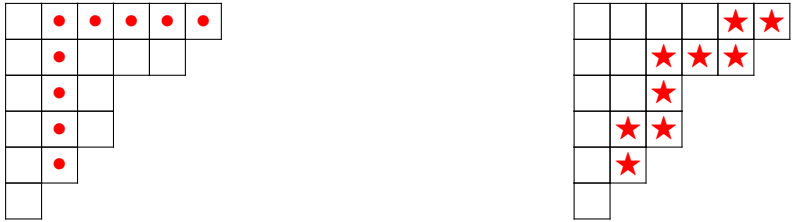

\begin{minipage}[c]{0.45\textwidth}
\[ \tableau[Ys]{ & \textcolor{red}{\bullet} & \textcolor{red}{\bullet} & \textcolor{red}{\bullet} &\textcolor{red}{\bullet} & \textcolor{red}{\bullet} \\ & \textcolor{red}{\bullet} & & &   \\  & \textcolor{red}{\bullet} &   \\ &\textcolor{red}{\bullet}  &  \\ &\textcolor{red}{\bullet}  \\   \\ } 
\]
\end{minipage}
\begin{minipage}[c]{0.45\textwidth}
\[ \tableau[Ys]{& & & &\textcolor{red}{\bigstar} & \textcolor{red}{\bigstar} \\ & & \textcolor{red}{\bigstar}  & \textcolor{red}{\bigstar}  & \textcolor{red}{\bigstar}  \\  & &\textcolor{red}{\bigstar}  \\ &\textcolor{red}{\bigstar}  &\textcolor{red}{\bigstar}  \\ &\textcolor{red}{\bigstar}  \\  \\  } 
\]
\end{minipage}
\caption{A 9-hook and the corresponding 9-rim hook.}\label{fig:hooks}
\end{figure}
The {\bf hook length} equals the number of boxes in the hook, and if there are $n$ such boxes we refer to the shape as an \textbf{$n$-hook}.  Each $n$-hook also corresponds to an \textbf{$n$-rim hook}, consisting of the $n$ contiguous boxes running along the border of the diagram which connect the top rightmost and bottom leftmost boxes of the $n$-hook; see the right diagram in Figure \ref{fig:hooks}.

In the case of the Grassmannian, knowing how to multiply each of the Schubert classes indexed by a single row of boxes $(r)$ for all $1 \leq r \leq n-m$ completely determines the multiplication in $QH^*(Gr(m,n))$; similarly for the column shapes.  By combining the classical Pieri rule for multiplying by classes of row shapes with the rim hook rule of \cite{BCFF}, one obtains the following restatement of the quantum Pieri rule originally due to Bertram.

\begin{thm}[Quantum Pieri Rule \cite{Bertram}]\label{qPieri}
Let $\lambda \in \mathcal P_{mn}$ and fix an integer $1 \leq r \leq n-m$.  In $QH^*(Gr(m,n))$,
\begin{equation}
\sigma_{(r)} * \sigma_\lambda = \sum\sigma_\mu + q \sum \sigma_\nu,
\end{equation}
where the first sum is over all partitions $\mu\in \mathcal P_{mn}$ such that $\mu$ is obtained from $\lambda$ by adding exactly $r$ boxes, no two of which were added in the same column; and the second sum is over all partitions $\nu \in\mathcal P_{mn}$ such that $\nu$ is obtained from $\lambda$ by adding exactly $r$ boxes, none of which are added to the right of column $n-m$ and no two in the same column, and then removing a single $n$-rim hook.
\end{thm}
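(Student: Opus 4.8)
The result is due to \cite{Bertram}, so one legitimate route is simply to cite it; the plan I would instead follow is the derivation indicated in the text, combining the classical Pieri rule for $H^*(Gr(m,n))$ with the rim hook rule of \cite{BCFF}. I would treat the $q^{0}$- and $q^{1}$-parts of the product separately, the first being routine and the second carrying the real content.

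For the constant term, the excerpt already notes that setting $q=0$ recovers $H^*(Gr(m,n))$, so the $q^{0}$-part of $\sigma_{(r)}*\sigma_\lambda$ is the ordinary cup product $\sigma_{(r)}\smile\sigma_\lambda$, which by the classical Pieri rule equals $\sum_\mu\sigma_\mu$ over $\mu\in\mathcal P_{mn}$ obtained from $\lambda$ by adding a horizontal strip of $r$ boxes -- precisely the first sum. For the quantum correction I would invoke the rim hook rule: compute $s_{(r)}\cdot s_\lambda$ in the ring $\Lambda_m$ of symmetric polynomials in $m$ variables, obtaining a sum of Schur functions $s_\mu$ over size-$r$ horizontal strips $\mu/\lambda$, and then reduce each term with $\mu_1>n-m$ by removing $n$-rim hooks, each removal contributing a power of $q$ together with a sign. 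Under the hypothesis $r\le n-m$ one has $\mu_1\le 2(n-m)$, so at most one $n$-rim hook is ever removed, and the quantum correction is produced as a signed combination $\sum\pm q\,\sigma_{\mu'}$ of Schubert classes. The task is then to reorganize this combination into the all-positive second sum of the statement, which instead records the $q$-terms by first adding a horizontal strip that occupies row $m+1$ while staying within the first $n-m$ columns, and then removing a single $n$-rim hook. The dictionary between the two descriptions is the cylindric symmetry of the $m\times(n-m)$ rectangle: on the cylinder $\mathbb{Z}^2/\mathbb{Z}(-m,\,n-m)$, a horizontal strip that spills past column $n-m$ in the top row is the same as one that grows into row $m+1$, and contracting the corresponding $n$-rim hook is the same operation either way. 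Establishing this dictionary -- and checking that it is a bijection onto the index set of the second sum, with each $\nu$ landing in $\mathcal P_{mn}$ because $\nu_1\le n-m$ and the $(m+1)$st row is deleted -- is the combinatorial heart of the proof.

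The step I expect to be the main obstacle is exactly the sign bookkeeping inside that reorganization: the rim hook rule yields the quantum part only as an \emph{a priori} signed sum, and one must show the signs collapse so that every surviving Schubert class occurs with coefficient exactly $+1$. Concretely this amounts to tracking the sign $(-1)^{\,\mathrm{height}-1}$ attached to each $n$-rim hook removal against the sign $(-1)^{m-1}$ in the presentation $QH^*(Gr(m,n))\cong\Lambda_m[q]/(h_{n-m+1},\dots,h_{n-1},\,h_n-(-1)^{m-1}q)$, and verifying that any contribution with the ``wrong'' sign either cancels in pairs or does not survive the reduction. A cleaner way to sidestep the signs altogether is to carry out the whole computation inside the cylindric (toric-Schur) model of $QH^*(Gr(m,n))$, which is manifestly positive and in which the second sum is visibly the ``once-wrapped'' part of a cylindric Pieri rule; since this is essentially the machinery developed in the later sections of the paper, in a review section one might reasonably prefer to record the statement as in \cite{Bertram} and defer this derivation until that framework is available.
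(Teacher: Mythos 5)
The paper gives no proof of this statement: it is quoted as a known result of \cite{Bertram}, with only the one-sentence remark that the stated form is obtained by combining the classical Pieri rule for row shapes with the rim hook rule of \cite{BCFF}. Your proposal follows exactly that indicated route, and correctly locates where the remaining combinatorial work would lie (the sign cancellation for the single $n$-rim hook removal and the dictionary between the wide-shape and tall-shape descriptions of the degree-one terms), so it is consistent with the paper's treatment.
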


\subsubsection{Duality isomorphisms in quantum cohomology}

In order to make a precise connection later between the affine Fomin-Stanley algebra and the affine nilTemperley-Lieb algebra, we will  require several isomorphisms on the quantum cohomology of the Grassmannian arising from various types of duality, which we now review.

First, the cohomology of the Grassmannian admits a natural duality isomorphism arising from the fact that $Gr(m,n) \cong Gr(n-m,n)$.  Given a partition $\lambda \in \mathcal P_{mn}$, define the {\bf transpose} of $\lambda$ by interchanging the role of the rows and columns, and denote it by $\lambda^T \in \mathcal{P}_{n-m,n}.$
 The map 
\begin{equation}\label{transpose}
\begin{aligned}
T: QH^*(Gr(m,n)) & \longrightarrow QH^*(Gr(n-m,n)) \\
 \sigma_\lambda & \longmapsto \sigma_{\lambda^T}
 \end{aligned}
\end{equation}
is an isomorphism.  Note in particular that the transpose interchanges the classes $\sigma_{(r)}$ and $\sigma_{(1^r)}$.

Second, the Schubert basis is self-dual with respect to the Poincar\'{e} pairing, which induces an automorphism on cohomology.  More specifically, if $\lambda = (\lambda_1, \dots, \lambda_m) \in \mathcal P_{mn}$, denote by 
\begin{equation}
\lambda^\vee = (n-m-\lambda_m, \dots, n-m-\lambda_1)
\end{equation}
 the {\bf complement} of $\lambda$, which is the shape given by looking at the complement of $\lambda$ in the full $m\times (n-m)$ rectangle, and then rotating the shape 180 degrees.  The following map is an automorphism
\begin{equation}\label{transpose}
\begin{aligned}
QH^*(Gr(m,n)) & \longrightarrow QH^*(Gr(m,n)) \\
 \sigma_\lambda & \longmapsto \sigma_{\lambda^\vee}.
 \end{aligned}
\end{equation}

Finally, we review the {\bf strange duality} involution appearing in Postnikov \cite{P05}, which was discovered independently in the special case of $q=1$ by Hengelbrock \cite{Hengel}, and later generalized by Chaput, Manivel, and Perrin to other (co)miniscule homogeneous varieties \cite{CMP07}.

\begin{thm}[Theorem 6.5 \cite{P05}] There is an involution
\begin{equation}
\begin{aligned}
\sd: QH^*(Gr(\kk,n)) & \longrightarrow QH^*(Gr(\kk,n)) \quad \text{by} \\
\sigma_\lambda & \longmapsto q^{-\text{diag}_\kk(\lambda)}\sigma_{\mu} \\
 q\ &\longmapsto q^{-1}
 \end{aligned}
\end{equation}
where if the $01$-word corresponding to $\lambda$ is $\operatorname{b} = i_1i_2\cdots i_n$, then the $01$-word corresponding to $\mu$ is $\bin' = i_\kk i_{\kk-1}\cdots i_1 i_n i_{n-1}\cdots i_{\kk+1}$, and $\text{diag}_\kk(\lambda)$ is the number of boxes on the main diagonal. \label{SD}
\end{thm}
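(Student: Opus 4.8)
The statement to prove is Theorem~\ref{SD}, the strange duality involution on $QH^*(Gr(m,n))$. Let me think about how one would prove this.

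\textbf{Setup and strategy.} The map $\sd$ is defined combinatorially on $01$-words by a ``reverse-and-swap'' operation: if $\operatorname{b} = i_1\cdots i_n$ corresponds to $\lambda$, then $\sd(\sigma_\lambda) = q^{-\operatorname{diag}_m(\lambda)}\sigma_\mu$ where $\mu$ has $01$-word $i_m i_{m-1}\cdots i_1\, i_n i_{n-1}\cdots i_{m+1}$. The plan is to verify three things: (1) $\sd$ is a well-defined involution as a $\ZZ[q,q^{-1}]$-linear map (i.e., doing the operation twice returns the original $01$-word and the diagonal-exponent corrections cancel); (2) $\sd$ is a ring homomorphism for the quantum product; and (3) since $\sd$ is its own inverse by (1), it is an isomorphism, hence an involutive automorphism. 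Step (1) is essentially bookkeeping on words. Step (2) is the substance.

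\textbf{Reducing the homomorphism property to a Pieri check.} Since $QH^*(Gr(m,n))$ is generated as a $\ZZ[q]$-algebra by the special Schubert classes $\sigma_{(r)}$ for $1 \le r \le n-m$ (this follows from the quantum Pieri rule, Theorem~\ref{qPieri}, which determines all products recursively), it suffices to check that $\sd(\sigma_{(r)} * \sigma_\lambda) = \sd(\sigma_{(r)}) * \sd(\sigma_\lambda)$ for all $\lambda \in \mathcal{P}_{mn}$ and all $r$. First I would compute $\sd(\sigma_{(r)})$ directly from the $01$-word of the single row $(r)$, which is $\underbrace{0\cdots 0}_{n-m-r}\underbrace{1\cdots 1}_{m}\underbrace{0\cdots 0}_{r}$ wait — I need to be careful about the word-to-diagram convention. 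The $01$-word of $(r)$, a single row of length $r$ (with $r \le n-m$), read left to right tracing the boundary from bottom-left, is $r$ zeros, then $m$ ones after accounting for... Actually the precise form of $\sd(\sigma_{(r)})$ will come out to be $q^{-\epsilon}\sigma_{(1^r)^\vee}$ or a column-type class up to a complement, and this is where I'd expect the statement connects to the transpose/complement maps. Then, using the quantum Pieri rule on both sides, I would match terms: adding $r$ boxes no two in a column, possibly removing an $n$-rim hook, corresponds on $01$-words to a local move, and the reverse-and-swap operation should intertwine ``add a horizontal strip'' with ``add a vertical strip'' (accounting for the $Gr(m,n) \cong Gr(n-m,n)$ flavor built into the word reversal). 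The $q$-power shift $\sd(q) = q^{-1}$ must exactly absorb the difference between the classical part and the quantum ($n$-rim-hook) part of the Pieri expansion; tracking $\operatorname{diag}_m$ before and after adding a strip and removing a rim hook is the key numerical identity.

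\textbf{The main obstacle.} The hardest part will be the careful combinatorial verification in Step (2): showing that the reverse-and-swap operation on $01$-words, together with the diagonal correction $\operatorname{diag}_m$, transforms the quantum Pieri rule into itself (or into its transpose-complement conjugate). Concretely, one must show that if $\mu$ is obtained from $\lambda$ by adding a horizontal $r$-strip (and possibly removing an $n$-rim hook with the attendant factor of $q$), then $\sd(\mu)$ is obtained from $\sd(\lambda)$ by the corresponding operation coming from multiplication by $\sd(\sigma_{(r)})$, with the powers of $q$ matching after the substitution $q \mapsto q^{-1}$. This requires understanding how $\operatorname{diag}_m(\lambda)$ changes under adding a strip versus removing an $n$-rim hook — an $n$-rim hook removal should change the diagonal count by exactly $\pm 1$ relative to the strip addition in a way that produces the needed $q^{\pm 1}$. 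I would organize this by first proving it for $q$-specialization or the classical ($q=0$) truncation to isolate the horizontal/vertical strip duality (which is essentially the classical statement $Gr(m,n)\cong Gr(n-m,n)$ combined with Poincaré duality), and then handling the rim-hook correction term separately. Alternatively — and this may be cleaner — one can cite that $\sd = (\text{complement}) \circ (\text{transpose})$ up to the $q$-substitution and powers of $q$, reducing everything to the already-established maps in equations~(\ref{transpose}); but verifying that this composition produces exactly the reverse-and-swap word operation with exponent $\operatorname{diag}_m$ is itself the same combinatorial computation, so there is no free lunch. Since this is a restatement of \cite[Theorem 6.5]{P05}, in the actual paper I would give the proof in the streamlined combinatorial form above, or simply refer to Postnikov's argument and Hengelbrock's independent treatment of the $q=1$ case.
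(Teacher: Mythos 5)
First, a point of comparison: the paper does not prove this statement at all --- it is quoted as Theorem 6.5 of \cite{P05}, and the only surrounding material is a description of the map in terms of Durfee squares (complementing the southwest and northeast sections separately) and one worked example. So your proposal cannot be checked against an argument in the paper; it has to stand on its own as a reconstruction of Postnikov's proof, and as written it is a strategy outline rather than a proof.

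The two places where you defer the work are exactly where the content lives, and the one concrete computation you begin goes astray. (1) You never finish computing $\sd(\sigma_{(r)})$. In the paper's convention the $01$-word of the single row $(r)\in\mathcal P_{\kk n}$ is $1^{\kk-1}0^r10^{n-\kk-r}$ (up-steps for the $\kk-1$ empty rows come first), and reversing the first $\kk$ and the last $n-\kk$ letters separately yields $01^{\kk-1}0^{n-\kk-r}10^{r-1}$, i.e.\ the hook $(n-\kk-r+1,1^{\kk-1})$ with $\text{diag}_\kk((r))=1$, so $\sd(\sigma_{(r)})=q^{-1}\sigma_{(n-\kk-r+1,1^{\kk-1})}$. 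Your guess that the image is ``$q^{-\epsilon}\sigma_{(1^r)^\vee}$ or a column-type class'' is not correct, and your proposed shortcut $\sd=(\text{complement})\circ(\text{transpose})$ is also false --- the transpose lands in $QH^*(Gr(n-\kk,n))$ while $\sd$ is an automorphism of $QH^*(Gr(\kk,n))$, and the actual operation complements the two off-Durfee rectangles \emph{separately}, which is not the global complement. This matters downstream: the Pieri verification you propose would have to be run against multiplication by a hook class, not against another instance of Theorem \ref{qPieri}. (2) The ``key numerical identity'' governing how $\text{diag}_\kk$ changes under adding a horizontal strip versus removing an $n$-rim hook is asserted to exist but never stated or checked, and since the entire role of the prefactor $q^{-\text{diag}_\kk(\lambda)}$ together with $q\mapsto q^{-1}$ is to make that bookkeeping close up, the homomorphism property remains unproved. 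Even your Step (1) silently uses the unverified fact that $\text{diag}_\kk(\mu)=\text{diag}_\kk(\lambda)$, which is what makes $\sd^2(\sigma_\lambda)=q^{\text{diag}_\kk(\lambda)-\text{diag}_\kk(\mu)}\sigma_\lambda$ equal to $\sigma_\lambda$. Your closing instinct --- that one should simply cite Postnikov, and Hengelbrock for the $q=1$ case --- is in fact exactly what the authors do.
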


In the involution $\sd$, the map on the shapes $\lambda \mapsto \mu$ is given by taking the complement of the southwest and northeast sections of $\lambda$ separately. Here, we define the southwest section of $\lambda$ to be the rectangle below the largest {\bf Durfee square} (meaning upper-left justified)  that fits in the Young diagram, and the northeast section is the rectangle to the right of the largest Durfee square. Note that  $\text{diag}_\kk(\lambda)$ is the size of this Durfee square by definition.  We illustrate this comparatively strange involution with an example below.

\begin{ex} Consider $\text{Gr}(5,9)$ and the partition $\lambda = (3,2,2,1,1)$.
The 01-word for $\lambda$ is given by $\bin=\textcolor{red}{01101}\textcolor{blue}{1010}$, where we have separated the first $m=5$ and last $n-m=4$ bits by the colors red and blue, respectively.  Therefore, the image $\mu$ has the 01-word $\textcolor{red}{10110}\textcolor{blue}{0101}$ given by reversing the order of each of the red and blue strings. The corresponding shape is $\mu = (4,3,1,1,0)$, so we can compare the two Young diagrams:
\begin{equation}\lambda = {\scriptsize
\tableau[Ys]{ \times & \times & & \fl\\ \times &\times & \fl &\fl \\ & \\ & \fl \\ & \fl \ }} \qquad\qquad \mu = {\scriptsize \tableau[Ys]{ \times&\times & &\\\times & \times& &\fl \\ &\fl \\ & \fl \\ \fl & \fl	}}
\end{equation}
We have marked by $\times$ the boxes corresponding to the largest Durfee square in each of the shapes.  We have also added black squares to fill the southwest  $3 \times 2$ and northeast $2 \times 2$ rectangles below and to the right of the $2 \times 2$ Durfee square. We can then see that the shape in $\mu$'s southwest section is the complement in the $3 \times 2$ rectangle of the shape in $\lambda$'s southwest section, and similarly for the northwest section. Through the involution, we would thus have $\sigma_\lambda \mapsto q^{-2} \sigma_\mu$.
\end{ex}


\subsection{Homology of the affine Grassmannian}\label{Sec:HomAffGr}

Let $F = \CC((t))$ be the field of Laurent series with complex coefficients, and let $\mathcal{O} = \CC[[t]]$ be the ring of integers in $F$.  The \textbf{type $A$ affine Grassmannian} is defined to be \[\mathcal{G}r = SL_n(F)/SL_n(\mathcal{O}),\] and this space admits a decomposition into affine Schubert cells which are indexed by minimal length coset representatives in the quotient of the affine symmetric group $\tilde S_n$ by the symmetric group $S_n$.  As in the non-affine case, the (co)homology classes of these affine Schubert cells form a $\ZZ$-basis for the (co)homology of the affine Grassmannian.  The Peterson isomorphism says that, up to localization, the affine Schubert classes in the homology of the affine Grassmannian map to the quantum Schubert classes for the complete complex flag variety.  We review the precise statement in Section \ref{Sec:Peterson}, but before this isomorphism can be useful for doing calculations in quantum cohomology, we need to review the algebra and combinatorics of affine Schubert calculus.  

Bott showed that the homology of the type $A$ affine Grassmannian $H_*(\mathcal{G}r)$ is isomorphic to a subring of the ring of symmetric functions \cite{Bott}, and Lam proved in \cite{LamkSchur} that the $k$-Schur functions of Lapointe and Morse \cite{LMkSchur} represent the Schubert classes in $H_*(\mathcal{G}r)$ under Bott's isomorphism.  Lam's approach identifies both the homology of the affine Grassmannian and the subalgebra of symmetric functions spanned by the $k$-Schur functions with the affine Fomin-Stanley subalgebra of Kostant and Kumar's nilHecke ring \cite{KK}.  Following this approach, in this section we define a basis of the affine Fomin-Stanley subalgebra which can be identified through Lam's isomorphism with the Schubert basis for $H_*(\mathcal{G}r)$.

\subsubsection{Affine Schubert classes}\label{Sec:AffSchClasses}

For the remainder of this paper, we fix an integer $n \in \NN$ and define $k = n-1$.  In this section, we develop the algebra and combinatorics of the indexing sets for the affine Schubert cells in $\mathcal{G}r$.

The {\bf affine symmetric group}, denoted $\tilde S_n$, is obtained from $S_n$ by adding a generator
$s_0$ that satisfies the following relations with the simple transpositions of $S_n$
\begin{equation}
\begin{aligned}
s_0^2 &= 1\\
s_0 s_i & = s_i s_0 \text{ \ \ if $i \not \in \{1,n-1\}$}\\
s_0 s_{\pm 1}s_0 &= s_{\pm 1} s_0 s_{\pm 1}
\end{aligned}
\end{equation}
where the indices are taken modulo $n$. The definitions of reduced expressions for elements $w \in  \tilde S_n$ and the length $\ell(w)$ naturally extend to $\tilde S_n$.  We can consider the quotient 
$\tilde S_n / S_n$, and our standard coset representatives will be those of minimal length; namely, the elements $w \in \tilde S_n$ whose reduced expressions $w = s_{i_1,i_2,...,i_r}$ are forced to end in $s_0 = s_{i_r}$. Denote by $\tilde S_n^0$ this set of minimal length coset representatives in $\tilde S_n/S_n$, and note that we will sometimes abuse notation and write $\tilde S^0_n \subset \tilde S_n$.

There are several alternative indexing sets for the affine Schubert cells involving partitions satisfying certain properties.  The {\bf $k$-bounded partitions} are all Young diagrams with width less than $k=n-1$.  The {\bf n-cores} are all Young diagrams with no removable $n$-rim hook. 

\begin{thm}[Section 1.2 \cite{kschur}]\label{corekbounded}
The elements of $\tilde S_n^0$ are in bijection with both the set of $k$-bounded partitions, as well as the set of $n$-cores.
\end{thm}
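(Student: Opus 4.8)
The statement to prove is Theorem~\ref{corekbounded}: the minimal-length coset representatives $\tilde S_n^0$ are in bijection with $k$-bounded partitions and with $n$-cores. The plan is to establish the two bijections through an intermediate combinatorial gadget, rather than directly matching $k$-bounded partitions to $n$-cores. First I would set up the bijection $\tilde S_n^0 \longleftrightarrow \{n\text{-cores}\}$ via the standard action of $\tilde S_n$ on $n$-cores by adding and removing rim hooks (equivalently, the action on the set of ``bead configurations'' or abacus diagrams with $n$ runners). Concretely, to an element $w = s_{i_1}\cdots s_{i_r} \in \tilde S_n^0$ one associates the $n$-core obtained by starting from the empty partition and applying the sequence of operators $s_{i_j}$, where $s_i$ acts on an $n$-core by adding all addable cells of residue $i$ (or removing all removable cells of residue $i$) — the residue of cell $(a,b)$ being $b - a \bmod n$. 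One checks this action is well-defined on $\tilde S_n$, that $S_n$ is exactly the stabilizer of the empty core, and that the orbit map descends to a bijection on the quotient; the minimal-length condition (reduced expressions ending in $s_0$) corresponds precisely to cores that are nonempty unless $w = e$, and more importantly to the fact that each application genuinely adds cells.

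**The core-to-partition bijection.** Second, I would recall the classical bijection between $n$-cores and $k$-bounded partitions (with $k = n-1$), which is the heart of the matter. Given an $n$-core $\kappa$, one reads off the hook lengths of the cells in its first column (or uses the abacus): a cell of $\kappa$ survives into the associated $k$-bounded partition with the row length obtained by deleting from each row all cells whose hook length exceeds $n-1$ — equivalently, $\lambda_i = \#\{\text{cells }(i,j)\in\kappa : h(i,j) \le n-1\}$. The inverse map takes a $k$-bounded partition and ``inflates'' it by inserting the forced cells to make all $n$-rim hooks non-removable. The key lemma to verify is that this deletion map lands in $k$-bounded partitions and is invertible: since an $n$-core has no two cells in the same row with hook lengths differing by exactly $n$ in the relevant way, at most $n-1$ cells per row have hook length $\le n-1$, giving the width bound; and the abacus picture makes invertibility transparent because the $n$-core is determined by the position of the lowest gap on each of its $n$ runners, which is exactly the data of a $k$-bounded partition after normalization.

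**Composition and the main obstacle.** Third, I would compose the two bijections to get $\tilde S_n^0 \longleftrightarrow \{k\text{-bounded partitions}\}$, and note that under this composition $\ell(w)$ equals the number of cells of the corresponding $n$-core, which is a useful compatibility to record even though it is not part of the statement. Since Theorem~\ref{corekbounded} is cited from \cite[Section 1.2]{kschur}, in practice I expect the proof in the paper to simply invoke that reference rather than reproduce these constructions; if a self-contained argument is wanted, the bulk of the work is purely bookkeeping with abaci. The main obstacle is verifying that the $\tilde S_n$-action on $n$-cores by residue-$i$ cell addition/removal is genuinely well-defined and descends to a bijection $\tilde S_n/S_n \to \{n\text{-cores}\}$ — i.e., checking the braid and commutation relations of $\tilde S_n$ are respected by these operators, and that length is additive so that reduced words give a consistent answer. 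This is the step where one must be careful about the residue combinatorics (the interaction between addable/removable cells of residues $i$ and $i\pm 1$ encodes exactly the affine braid relation $s_i s_{i\pm1} s_i = s_{i\pm1} s_i s_{i\pm1}$), whereas the core-to-partition correspondence, once phrased via the abacus, is essentially immediate.
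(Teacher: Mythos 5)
Your proposal is correct and matches the paper, which does not prove Theorem~\ref{corekbounded} but cites \cite[Section 1.2]{kschur} and merely reviews the same two constructions you describe: the hook-length deletion/row-sliding correspondence between $n$-cores and $k$-bounded partitions, and the residue-labelled box-removal algorithm relating $n$-cores to reduced words in $\tilde S_n^0$ (your orbit--stabilizer packaging of the $\tilde S_n$-action on cores is just a reformulation of the latter). You even correctly anticipated that the paper would defer the verification of well-definedness to the cited reference.
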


We briefly review the bijections used in the proof of Theorem \ref{corekbounded}. To find the corresponding $n$-core from a $k$-bounded partition, remove any box with hook length greater than $n$. Then slide the partition to be left-justified. To reverse the procedure, start at the bottom row of the Young diagram. For each row, slide the row (and those above) to the right until no box in the row has a hook length that is greater than $k$.

To find the $n$-core $\mu$ corresponding to an element $w=s_{i_1}s_{i_2}\cdots s_{i_m}s_0 \in  \tilde S_n^0$, first draw a grid of boxes as below:
\begin{equation}
\begin{array}{cccc}
0 & 1 & 2 & \cdots \\
n-1 & 0 & 1 & \cdots \\
n-2 & n-1 & 0 & \ddots \\
\vdots & \vdots & \ddots & \ddots
\end{array}\label{grid}
\end{equation}
Reading right to left, place a box around the indices $0$, $i_m$, $i_{m-1}$, etc. At each step, place \emph{all} possible boxes with the same label which keep the shape a Young diagram. To go from an $n$-core $\mu$ to an element of $\tilde S_n^0$, overlay this grid on the diagram $\mu$. At the $j^{\text{th}}$ step, find an index $i_j$ such that removing all boxes on the border of the Young diagram $\mu^j$ labeled by $i_j$ gives a new Young diagram $\mu^{j+1}$, where $\mu^0 = \mu$. Once all boxes have been removed at the $r^{\text{th}}$ step, the element $w=s_{i_1,i_2,...,i_{r-1},0} \in \tilde S_n^0$ is the reduced expression corresponding to $\mu$. Note that $w$ necessarily ends in $s_0$ by construction, since 0 is the label of the box in the upper leftmost corner of $\mu$.

\begin{ex}
Let $n=4$ so that $k=3$, and let $\lambda=(2,1,1)$ be a $3$-bounded partition. Since the top left box has a hook length of four, we must slide the top row to the right until there are no hook lengths of four or more to obtain the corresponding $4$-core $\mu = (3,1,1)$. Now, removing boxes:
\begin{equation*}
{\small \tableau[Ys]{0  & 1 & 2  & \cdots \bl \\
		3 & 0 \bl & 1 \bl & \cdots \bl \\
		2 & 3 \bl & 0 \bl & \ddots \bl \\
		\vdots \bl & \vdots \bl & \ddots \bl & \ddots \bl}
}
\xrightarrow{\ s_2\ }
{\small \tableau[Ys]{0  & 1 & 2 \bl & \cdots \bl \\
		3 & 0 \bl & 1 \bl & \cdots \bl \\
		2 \bl & 3 \bl & 0 \bl & \ddots \bl \\
		\vdots \bl & \vdots \bl & \ddots \bl & \ddots \bl}
}
\xrightarrow{s_2s_1}
{\small \tableau[Ys]{0  & 1 \bl & 2 \bl & \cdots \bl \\
		3 & 0 \bl & 1 \bl & \cdots \bl \\
		2 \bl & 3 \bl & 0 \bl & \ddots \bl \\
		\vdots \bl & \vdots \bl & \ddots \bl & \ddots \bl}
}
\xrightarrow{s_2s_1 s_3}
{\small \tableau[Ys]{0  & 1 \bl & 2 \bl & \cdots \bl \\
		3 \bl & 0 \bl & 1 \bl & \cdots \bl \\
		2 \bl & 3 \bl & 0 \bl & \ddots \bl \\
		\vdots \bl & \vdots \bl & \ddots \bl & \ddots \bl}
}
\xrightarrow{s_2s_1s_3s_0}
{\small \tableau[Ys]{0 \bl & 1 \bl & 2 \bl & \cdots \bl \\
		3 \bl & 0 \bl & 1 \bl & \cdots \bl \\
		2 \bl & 3 \bl & 0 \bl & \ddots \bl \\
		\vdots \bl & \vdots \bl & \ddots \bl & \ddots \bl}
}
\end{equation*}
The $4$-core $\mu= (3,1,1)$ thus corresponds to $s_2 s_1 s_3 s_0 \in \tilde S_n^0$.  To go from $s_2 s_1 s_3 s_0$ back to $\mu$, read the arrows backwards to add the boxes corresponding to each letter in $s_2 s_1 s_3 s_0$ from right to left. 
\end{ex}

Finally, the transpose map on $n$-cores induces a bijection on $k$-bounded partitions. Letting ${\bf c}(\lambda)$ denote the $n$-core corresponding to a $k$-bounded partition $\lambda$, define the  {\bf $k$-conjugate} of 
$\lambda$ to be the $k$-bounded partition $\lambda^{\omega_k} = {\bf c}^{-1}[({\bf c(\lambda)})^T]$. Note that $k$-conjugation maps the reduced expression $w =s_{i_1,i_2,...,i_r,0}\in \tilde S_n^0$ to $w'=s_{i_1',i_2',...,i_r',0}$
where $i_j' = n-i_j$ for all $1 \leq j\leq r$.

\begin{ex} We compute the $4$-conjugate of $\lambda = (4,3,2,2)$. We label the boxes with their hook lengths for clarity. 
	\begin{equation*}\scriptsize
	\tableau[Ys]{7& 6& 3& 1\\ 5& 4& 1\\ 3& 2 \\ 2& 1 } \quad {\LARGE \xrightarrow{\ {\bf c}\ }} \quad 
	\tableau[Ys]{\fl &\fl & \fl & \fl & \fl &4& 3& 2& 1 \\ \fl & \fl & 3& 2& 1\\ 3& 2 \\ 2& 1\ } \quad{\LARGE \xrightarrow{\ T\ }} \quad
	\tableau[Ys]{\textcolor{red}{12} &\textcolor{red}{7} &3 &2 \\\textcolor{red}{11} &\textcolor{red}{6} &2 &1 \\\textcolor{red}{8} & 3 \\\textcolor{red}{7} & 2\\\textcolor{red}{6} & 1\\ 4\\ 3\\ 2\\ 1 } \quad {\LARGE \xrightarrow{\ {\bf c}^{-1} }} \quad
	\tableau[Ys]{10& 2\\9 & 1 \\7 \\6 \\5 \\ 4\\ 3\\ 2\\ 1}
	\normalsize
	\end{equation*}
	The black squares show the ones that appear from sliding to obtain the 5-core, and the red numbers get removed in the passage back to the 4-bounded partition after the transpose. 
\end{ex}

\subsubsection{$k$-Schur functions}

The ring of symmetric functions $\Lambda \subset \ZZ[x_1,x_2,...]$ is generated by the
{\bf homogeneous symmetric functions} which are defined by
\begin{equation}
h_r = \sum_{\footnotesize 0\le i_1 \le i_2 \le \cdots \le i_r} x_{i_1}x_{i_2}\cdots x_{i_r}.
\end{equation}
The ring $\Lambda$ also has a basis of {\bf Schur functions} denoted $s_\lambda$, which are indexed by a partition
$\lambda$ and follow a Pieri rule like the Schubert classes $\sigma_{\lambda} \in H^*(Gr(\kk,n))$.
We can consider instead the set $\Lambda^{(k)} = \ZZ[h_1,...,h_k]$, which has a basis given by the {\bf $k$-Schur functions} denoted $s_{\lambda}^{(k)}$.  As noted in \cite{LM07}, in the limit $k\to \infty$, the $k$-Schur functions become the Schur functions. We will define the $k$-Schur functions recursively via the weak Pieri rule below.

\begin{thm}[Weak Pieri rule, Section 2.2 \cite{kschur}] \label{WeakPieri}
	Let $\lambda$ be a $k$-bounded partition, and fix an integer $1 \le r \le k$. Then
	\begin{equation}
	h_r \cdot s_{\lambda}^{(k)} = \sum s_{\nu}^{(k)}
	\end{equation}
	where the sum is over all $k$-bounded partitions $\nu$ satisfying two conditions: $\nu$ is obtained from $\lambda$ by adding exactly $r$ boxes, no two of which were added in the same column, and $\nu^{\omega_k}$ is obtained from $\lambda^{\omega_k}$ by adding exactly $r$ boxes, no two of which were added to the same row.
\end{thm}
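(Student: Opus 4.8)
The surrounding text makes clear that the weak Pieri rule is to be read as a recursive \emph{definition} of the family $\{s_\lambda^{(k)}\}$, so the content of the statement is the assertion that this recursion is consistent and produces a basis of $\Lambda^{(k)}=\ZZ[h_1,\dots,h_k]$. Rather than argue well-posedness of the bare recursion, I would adopt the combinatorial definition of the $k$-Schur functions from \cite{kschur} --- either as the functions dual, under the Hall pairing, to the affine Schur functions $\sum_T x^{\mathrm{wt}(T)}$ summed over weak (equivalently, $k$-)tableaux of shape the $n$-core ${\bf c}(\lambda)$, or via Lam's identification of $\Lambda^{(k)}$ with the affine Fomin--Stanley algebra $\mathbb{B}$ --- and then derive the displayed identity as a reformulation of the already-established affine Pieri rule for $H_*(\mathcal{G}r)$.

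The plan would proceed in three steps. First, I would record the identity $h_r=s_{(r)}^{(k)}$ for $1\le r\le k$: taking the source partition in the rule to be $\emptyset$ forces the single row $(r)$ to be the only term, since from the empty diagram a column-disjoint addition of $r$ boxes forming a partition must be one row, and its $k$-conjugate $(1^r)$ is obtained from $\emptyset$ by a row-disjoint addition, so both conditions hold. Second, I would invoke the affine Pieri rule for $H_*(\mathcal{G}r)$, equivalently for the $k$-Schur functions (due to Lam, Lapointe, Morse, and Shimozono; see \cite{kschur}): multiplication by $h_r=s_{(r)}^{(k)}$, which is the class of Lam's non-commutative homogeneous symmetric function $\tilde h_r$ in $\mathbb{B}$, is multiplicity-free, with $h_r\cdot s_\lambda^{(k)}=\sum_\nu s_\nu^{(k)}$ over those $\nu$ for which ${\bf c}(\nu)/{\bf c}(\lambda)$ is a ``weak horizontal strip'' of size $r$. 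Third, I would prove the dictionary lemma translating this core condition into the two conditions of the statement: applying the core/bounded-partition bijections of Theorem \ref{corekbounded} to ${\bf c}(\lambda)$ and, after transposing, to its conjugate core, one checks that ${\bf c}(\nu)/{\bf c}(\lambda)$ is a weak horizontal $r$-strip exactly when $\nu$ is obtained from $\lambda$ by adding $r$ boxes with no two in a common column \emph{and} $\nu^{\omega_k}$ is obtained from $\lambda^{\omega_k}$ by adding $r$ boxes with no two in a common row. Combining the second and third steps yields the rule, and that the $s_\lambda^{(k)}$ indexed by $k$-bounded $\lambda$ form a basis of $\Lambda^{(k)}$ comes with the combinatorial setup of the first step.

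I expect the main obstacle to be the affine Pieri rule itself --- in particular its multiplicity-freeness --- together with the strip-dictionary lemma; both are combinatorial and are proved either through a weight-preserving ``weak insertion'' bijection on weak tableaux or through a Chevalley-type formula in the nilHecke ring, as carried out in \cite{kschur}. Should one instead want a self-contained proof inside $\Lambda^{(k)}$, the difficulty simply moves to showing the recursion is well posed: with $s_\emptyset^{(k)}=1$ and induction on $d=|\lambda|$, for each $k$-bounded $\nu$ of size $d$ one must find a predecessor $(\mu,r)$ for which $\nu$ is the unique term of $h_r\cdot s_\mu^{(k)}$ that is maximal in a fixed total order refining dominance --- so that $s_\nu^{(k)}:=h_r\,s_\mu^{(k)}-\sum_{\nu'\prec\nu}s_{\nu'}^{(k)}$ is forced --- and then check that the outcome is independent of that choice, that every degree-$d$ instance of the rule holds, and that $s_\nu^{(k)}\in\Lambda^{(k)}$; this consistency check again reduces to the strip combinatorics above.
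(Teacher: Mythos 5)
The paper does not actually prove Theorem \ref{WeakPieri}: it is imported verbatim from Section 2.2 of \cite{kschur}, and the surrounding text uses it (together with the base case $s_{(r)}^{(k)}=h_r$ and the recursion of Lemma \ref{hdecomp}) as the \emph{working definition} of the $k$-Schur functions, so there is no in-paper argument to compare yours against. That said, your reconstruction is a faithful outline of how the result is established in the cited source: one takes an independent definition of $s_\lambda^{(k)}$ (via weak tableaux dual to the affine Schur functions, or via Lam's identification of $\Lambda^{(k)}$ with the affine Fomin--Stanley algebra $\mathbb B$, which is exactly the setting the paper itself adopts in Proposition \ref{AffFominStanley} and Theorem \ref{jBasiskSchur}), invokes the multiplicity-free affine Pieri rule for multiplication by ${\bf \tilde h}_r$, and translates the resulting weak-horizontal-strip condition on $n$-cores into the two conditions on $\lambda$ and $\lambda^{\omega_k}$ via the bijections of Theorem \ref{corekbounded}. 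Your base-case check that $\lambda=\emptyset$ forces $\nu=(r)$ is correct, and you rightly identify the two genuine burdens --- the multiplicity-freeness of the affine Pieri rule and the strip dictionary --- as the places where the real work lives; neither is done in this paper, and both are exactly what \cite{kschur} supplies. Your closing observation, that a self-contained treatment inside $\Lambda^{(k)}$ would instead have to establish well-posedness of the recursion, accurately describes the gap one would have to fill if one refused to import the affine machinery; the paper sidesteps this entirely by citation, so your proposal is, if anything, more careful about the logical status of the statement than the paper is.
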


To begin the recursion, first note that  $s_{(r)}^{(k)} =h_r$ for all $r\le k$. To compute other $k$-Schur functions, we use the following lemma, the inductive proof for which is a straightforward generalization of Example 2.11 in \cite{kschur}.

\begin{lemma}[Section 2.2 \cite{kschur}]\label{hdecomp} 
Let $\lambda = (\lambda_1,...,\lambda_p)$ be a $k$-bounded partition. Then,
	\begin{equation}\label{hdecomp2}
	s_\lambda^{(k)}= h_{\lambda_1} \cdot s_{(\lambda_2,...,\lambda_p)}^{(k)}+ \sum_{i = 1}^{n-1-\lambda_1} h_{i+\lambda_1} \cdot\left(\sum_{j} c_{ij} s_{\mu_{ij}}^{(k)}\right)
	\end{equation}
	for some finite number of k-bounded partitions $\mu_{ij}$ and constants $c_{ij} \in \ZZ$. 
\end{lemma}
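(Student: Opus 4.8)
The plan is to induct on the length $p$ of the partition $\lambda = (\lambda_1, \dots, \lambda_p)$, with the base case $p = 1$ handled by the identity $s_{(r)}^{(k)} = h_r$ together with the weak Pieri rule. For the inductive step, the natural starting point is the weak Pieri product $h_{\lambda_1} \cdot s_{(\lambda_2, \dots, \lambda_p)}^{(k)}$. By Theorem~\ref{WeakPieri}, this product expands as a sum of $s_{\nu}^{(k)}$ over all $k$-bounded $\nu$ obtained from $(\lambda_2, \dots, \lambda_p)$ by adding $\lambda_1$ boxes in distinct columns, subject also to the $k$-conjugate condition. The key observation is that $\lambda = (\lambda_1, \lambda_2, \dots, \lambda_p)$ itself is one such $\nu$: adding a full new top row of length $\lambda_1$ certainly adds $\lambda_1$ boxes in distinct columns, and one checks (this is where the $k$-bounded and $k$-conjugate bookkeeping enters) that the second Pieri condition on $k$-conjugates is satisfied precisely because $\lambda_1 \ge \lambda_2$, i.e., prepending a longest row is always Pieri-admissible. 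So $s_{\lambda}^{(k)}$ appears with coefficient $1$ in $h_{\lambda_1} \cdot s_{(\lambda_2, \dots, \lambda_p)}^{(k)}$, and we may write
\begin{equation*}
s_{\lambda}^{(k)} = h_{\lambda_1} \cdot s_{(\lambda_2, \dots, \lambda_p)}^{(k)} - \sum_{\nu \neq \lambda} s_{\nu}^{(k)},
\end{equation*}
where the remaining $\nu$ all have first part $\nu_1 > \lambda_1$ (since $\nu \neq \lambda$ forces at least one of the $\lambda_1$ added boxes to sit above row $2$, making $\nu_1 \geq \lambda_1 + 1$), and $\nu_1 \le k = n-1$ by $k$-boundedness.

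Next I would rewrite each correction term $s_{\nu}^{(k)}$ with $\nu_1 = i + \lambda_1$ for some $1 \le i \le n-1-\lambda_1$. Applying the inductive hypothesis to $\nu$ is not quite enough by itself, because the hypothesis only gives a decomposition of $s_{\nu}^{(k)}$ in terms of $h_{\nu_1} = h_{i+\lambda_1}$ times lower $k$-Schurs plus $h_{i'+\nu_1}$ times still-lower ones — and $i' + \nu_1$ could exceed $n-1$. The resolution is that in fact the inductive hypothesis should be strengthened, or rather read carefully: the claim (\ref{hdecomp2}) only asserts the existence of \emph{some} expansion with the $h$-indices lying in $\{\lambda_1, \lambda_1+1, \dots, n-1\}$, and when we feed in a partition $\nu$ with larger first part $\nu_1$, the indices appearing are in $\{\nu_1, \dots, n-1\} \subseteq \{\lambda_1+1, \dots, n-1\}$, which is exactly the range allowed on the right-hand side of (\ref{hdecomp2}) for $\lambda$. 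Thus substituting the inductive expansions of all the correction terms $s_\nu^{(k)}$ into the displayed formula, and collecting the $h_{\lambda_1} \cdot s_{(\lambda_2,\dots,\lambda_p)}^{(k)}$ term separately, yields exactly an expression of the form (\ref{hdecomp2}), with the constants $c_{ij} \in \ZZ$ obtained by combining the $\pm 1$'s from the Pieri corrections with the integer constants supplied by induction.

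The main obstacle I anticipate is the careful verification of two finiteness/range claims that make the bookkeeping close up cleanly: first, that every $\nu \ne \lambda$ occurring in $h_{\lambda_1} \cdot s_{(\lambda_2,\dots,\lambda_p)}^{(k)}$ genuinely has $\nu_1 \ge \lambda_1 + 1$ (so it falls under the range $i \ge 1$) and $\nu_1 \le n-1$ (so $i \le n-1-\lambda_1$) — the first is immediate from the distinct-columns condition plus $\nu \ne \lambda$, the second from $k$-boundedness of $\nu$; and second, that the inductive expansions of those $s_\nu^{(k)}$ only ever introduce $h_j$ with $j \le n-1$, which is guaranteed by the form of (\ref{hdecomp2}) applied to $\nu$. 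Neither is deep, but both require being precise about what "width less than $k = n-1$" means versus "first part $\le n-1$", and about the exact shape of the upper index in the sum. The rest is routine: reindex the double sum, absorb signs into the $c_{ij}$, and note finiteness since there are only finitely many $k$-bounded partitions of each size. This is essentially the inductive argument sketched in Example~2.11 of \cite{kschur}, carried out in the stated generality.
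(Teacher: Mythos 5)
Your proposal takes the same route as the paper, which does not spell out a proof at all: it simply cites Example~2.11 of \cite{kschur} and describes exactly the recursion you use, namely expand $h_{\lambda_1}\cdot s^{(k)}_{(\lambda_2,\dots,\lambda_p)}$ by the weak Pieri rule, observe that $s^{(k)}_\lambda$ occurs with coefficient $1$ and that every other term has strictly larger first part, and recurse. The substance of your argument is right, but two points in the write-up need repair before it is a proof.

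First, the induction cannot be on the length $p$. A correction term $\nu$ obtained by adding a horizontal strip of $\lambda_1$ boxes to $(\lambda_2,\dots,\lambda_p)$ can again have length $p$ (for instance $\nu=(3,1,1)$ arises in $h_2\cdot s^{(k)}_{(2,1)}$ when computing $s^{(k)}_{(2,2,1)}$), so induction on $p$ does not terminate. What does terminate --- and what your own range bookkeeping already supplies --- is downward induction on the first part: every $\nu\neq\lambda$ in the Pieri expansion has $\lambda_1<\nu_1\le k=n-1$, and in the base case $\lambda_1=k$ there are no correction terms at all, so $s^{(k)}_\lambda=h_k\cdot s^{(k)}_{(\lambda_2,\dots,\lambda_p)}$. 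Second, your parenthetical justification that $\nu\neq\lambda$ forces $\nu_1\ge\lambda_1+1$ is garbled: ``at least one added box above row $2$'' describes $\lambda$ itself, all of whose added boxes lie in row $1$. The correct argument is that, writing $\mu=(\lambda_2,\dots,\lambda_p)$, the horizontal strip condition $\nu_j\le\mu_{j-1}$ forces rows $2,3,\dots$ to receive at most $\sum_{j\ge2}(\mu_{j-1}-\mu_j)=\mu_1=\lambda_2$ boxes in total, hence row $1$ receives at least $\lambda_1-\lambda_2$ boxes and $\nu_1\ge\lambda_1$, with equality only if every row $j\ge2$ is filled up to $\nu_j=\mu_{j-1}=\lambda_j$, i.e.\ only if $\nu=\lambda$. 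With these two corrections your bookkeeping on the index range $\{\lambda_1+1,\dots,n-1\}$ closes the argument as you describe; the remaining unproved assertion, that prepending the largest part is always weak-Pieri-admissible (the $k$-conjugate condition), is also left implicit by the paper and by \cite{kschur}.
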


The above lemma gives a definition of the $k$-Schur functions as some polynomial
in the $h_r$, which can be determined by computing specific products and then rearranging the
expressions. If we want to compute $s_\lambda^{(k)}$, we first compute the product
$h_{\lambda_1} s_{(\lambda_2,...,\lambda_p)}^{(k)}$. The $k$-Schur function $s_\lambda^{(k)}$ will appear in this product, as well as other $k$-Schur functions, which we proceed to compute recursively in the same manner.  We illustrate this recursion via example.

\begin{ex}
Let $n=5$ so that $k=4$, and let $\lambda = (4,3,1,1)$. We will compute $s_{(4,3,1,1)}^{(4)}$ in terms of the $h_r$.  Lemma \ref{hdecomp} says that we should first compute the product 
	$h_4 \cdot s_{(3,1,1)}^{(4)} = s_{(4,3,1,1)}^{(4)}$, which we do using the weak Pieri rule of Theorem \ref{WeakPieri}.  Using the weak Pieri rule again,  the lemma says that we should next compute 
	$h_3 \cdot s_{(1,1)}^{(4)} = s_{(3,1,1)}^{(4)}$.  To calculate this product, note that $(1^5)=(4,1)^{\omega_4}$ cannot be obtained from $(2)=(1,1)^{\omega_4}$ by adding three boxes 
	to different rows. Last, $h_1 \cdot s_{(1)}^{(4)} = s_{(1,1)}^{(4)} + s_{(2)}^{(4)}$, since  both $(1)^{\omega_4}=(1)$ and $(1,1)^{\omega_4}=(2)$. Recalling that $s_{(r)}^{(4)}=h_r$ for all $r \leq 4$,
	we rearrange the products to find 
\begin{equation}\label{kSchurExEq}
	s_{(1,1)}^{(4)}=h_1^2 - h_2 \qquad\quad  s_{(3,1,1)}^{(4)}=h_3(h_1^2-h_2) \qquad \quad s_{(4,3,1,1)}^{(4)} = h_4h_3(h_1^2-h_2).
\end{equation}
	\label{commkschurex}
\end{ex}

Similar to the Schubert classes $\sigma_\lambda$, which yield an automorphism of $ QH^*(Gr(\kk,n))$ given by $\sigma_{\lambda} \mapsto \sigma_{\lambda^T}$, the $k$-Schur functions also admit an automorphism under $k$-conjugation.

\begin{thm}[Theorem 38 \cite{LM07}]\label{kconjinv}
The $k$-conjugation map is an involution on the subalgebra of symmetric functions spanned by the homogeneous symmetric functions: 
	\begin{equation}
	\begin{aligned}
	\omega : \Lambda^{(k)} &\longrightarrow \Lambda^{(k)}\\
	s_{\lambda}^{(k)} &\longmapsto s_{\lambda^{\omega_k}}^{(k)}.
	\end{aligned} 
	\end{equation}
\end{thm}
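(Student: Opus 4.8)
The plan is to \emph{define} the map of the theorem directly on the $k$-Schur basis, by $s_\lambda^{(k)} \mapsto s_{\lambda^{\omega_k}}^{(k)}$ extended $\ZZ$-linearly, and then verify the two properties that are not automatic: that this is a well-defined involution of $\Lambda^{(k)}$, and that it respects multiplication. The first is essentially free. Since the $s_\lambda^{(k)}$ form a $\ZZ$-basis of $\Lambda^{(k)}$, a $\ZZ$-linear map is determined by its values on them; and because transpose is an involution on $n$-cores that preserves the property of being an $n$-core (hook lengths are unchanged under transpose) and ${\bf c}$ is a bijection between $k$-bounded partitions and $n$-cores, the rule $\lambda \mapsto \lambda^{\omega_k} = {\bf c}^{-1}[({\bf c}(\lambda))^T]$ satisfies $(\lambda^{\omega_k})^{\omega_k} = \lambda$. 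Hence $\omega$ is a $\ZZ$-linear bijection with $\omega^2 = \mathrm{id}$, and in particular $\omega(1) = \omega(s_\emptyset^{(k)}) = s_\emptyset^{(k)} = 1$. Everything then reduces to checking that $\omega$ is a ring homomorphism.

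The heart of the matter is the single identity
\begin{equation*}
\omega(h_r \cdot s_\lambda^{(k)}) \;=\; s_{(1^r)}^{(k)} \cdot s_{\lambda^{\omega_k}}^{(k)} \qquad \text{for all $k$-bounded $\lambda$ and all $1 \le r \le k$.}
\end{equation*}
To prove it I would first note that $(r)^{\omega_k} = (1^r)$: since $r \le k = n-1$, the one-row shape $(r)$ has all hook lengths at most $n-1$, so it equals its own $n$-core; its transpose $(1^r)$ is an $n$-core for the same reason and is already $k$-bounded, so $\omega(h_r) = \omega(s_{(r)}^{(k)}) = s_{(1^r)}^{(k)}$. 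Applying $\omega$ to the weak Pieri rule (Theorem \ref{WeakPieri}) gives $\omega(h_r \cdot s_\lambda^{(k)}) = \sum_\nu s_{\nu^{\omega_k}}^{(k)}$, the sum over $k$-bounded $\nu$ with $\nu/\lambda$ a horizontal $r$-strip and $\nu^{\omega_k}/\lambda^{\omega_k}$ a vertical $r$-strip. Re-indexing by $\rho = \nu^{\omega_k}$ and setting $\mu = \lambda^{\omega_k}$, these conditions turn into ``$\rho/\mu$ a vertical $r$-strip and $\rho^{\omega_k}/\mu^{\omega_k}$ a horizontal $r$-strip,'' which is precisely the index set of the dual weak Pieri rule --- the Pieri rule for multiplication by $s_{(1^r)}^{(k)}$, obtained from Theorem \ref{WeakPieri} by $k$-conjugating every shape in it; see \cite{kschur}. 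Thus the sum equals $s_{(1^r)}^{(k)} \cdot s_\mu^{(k)}$, which is the asserted identity.

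Granting the key identity, multiplicativity follows by a purely formal ``conjugation of operators'' argument. For $1 \le r \le k$ let $\phi_r$ and $\psi_r$ denote the operators on $\Lambda^{(k)}$ of multiplication by $h_r$ and by $s_{(1^r)}^{(k)}$; the $\phi_r$ commute pairwise, and so do the $\psi_r$. Evaluated on an arbitrary basis element, the key identity says exactly that $\omega \circ \phi_r = \psi_r \circ \omega$, and hence inductively $\omega \circ \phi_{r_1} \cdots \phi_{r_j} = \psi_{r_1} \cdots \psi_{r_j} \circ \omega$. Since $\Lambda^{(k)} = \ZZ[h_1, \dots, h_k]$, every $f \in \Lambda^{(k)}$ can be written $f = F(\phi_1, \dots, \phi_k)(1)$ for a $\ZZ$-polynomial $F$, and for any $g \in \Lambda^{(k)}$ one has $fg = F(\phi)(g)$ because multiplication by each $h_r$ commutes with multiplication by $g$; therefore
\[
\omega(fg) \;=\; F(\psi)(\omega(g)) \;=\; (F(\psi)(1)) \cdot \omega(g) \;=\; \omega(F(\phi)(1)) \cdot \omega(g) \;=\; \omega(f) \cdot \omega(g),
\]
where $\omega(1) = 1$ is used in the penultimate equality. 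Therefore $\omega$ is an algebra automorphism of $\Lambda^{(k)}$ sending $s_\lambda^{(k)}$ to $s_{\lambda^{\omega_k}}^{(k)}$.

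I expect the one genuinely non-formal ingredient to be the dual weak Pieri rule quoted above --- equivalently, the assertion that $k$-conjugating all the shapes in the $h_r$-Pieri rule yields the $s_{(1^r)}^{(k)}$-Pieri rule. If one does not wish to invoke it, the key identity can instead be established by induction on $|\lambda|$, using Lemma \ref{hdecomp} to rewrite $s_\lambda^{(k)}$ in terms of products $h_a \cdot s_\mu^{(k)}$ with $|\mu| < |\lambda|$, applying the inductive hypothesis, and matching the horizontal-strip bookkeeping on one side against the vertical-strip bookkeeping on the $k$-conjugated side. That matching is the step where the combinatorial definition of $k$-conjugation, via transpose of the associated $n$-cores, genuinely enters, and it is where I would expect essentially all of the real work to be concentrated.
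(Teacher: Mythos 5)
This statement is quoted from \cite{LM07} (Theorem 38) and the paper supplies no proof of it, so there is nothing in-paper to compare against; I can only assess your argument on its own terms. Your formal skeleton is fine: defining $\omega$ on the $k$-Schur basis, noting that $\lambda\mapsto\lambda^{\omega_k}$ is an involution because transpose preserves $n$-cores, and reducing multiplicativity to the single intertwining identity $\omega\circ\phi_r=\psi_r\circ\omega$ via the operator-conjugation argument is all correct, as is the observation that $(r)^{\omega_k}=(1^r)$ so that $\omega(h_r)=s_{(1^r)}^{(k)}=e_r$.

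The genuine gap is the key identity itself. Applying your linear map to the weak Pieri rule gives $\omega(h_r\cdot s_\lambda^{(k)})=\sum_\nu s_{\nu^{\omega_k}}^{(k)}$ for free, but the assertion that this sum equals $s_{(1^r)}^{(k)}\cdot s_{\lambda^{\omega_k}}^{(k)}$ \emph{is} the dual ($e_r$-)Pieri rule, which is not among the tools the paper makes available: Theorem \ref{WeakPieri} is the $h_r$ rule only, and it serves (via Lemma \ref{hdecomp}) as the recursive \emph{definition} of the $s_\lambda^{(k)}$ in this treatment. In the standard references the dual Pieri rule is derived as a corollary of precisely this involution theorem, so citing it here is circular unless you give an independent proof. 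Your fallback --- induction on $|\lambda|$ using Lemma \ref{hdecomp} and ``matching the horizontal-strip bookkeeping against the vertical-strip bookkeeping on the $k$-conjugated side'' --- correctly locates where the content lies, but that matching is the entire theorem: it requires a genuine combinatorial argument about how adding a horizontal $r$-strip to $\lambda$ interacts with the transpose of the associated $n$-core (in \cite{LM07} this is done via weak $k$-tableaux and their conjugates), and you have not carried it out. As written, the proposal is a correct reduction of the theorem to its hard combinatorial core, with that core left unproved.
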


\subsubsection{Affine nilCoxeter algebra and non-commutative $k$-Schur functions}

The homology of the affine Grassmannian is isomorphic to a subalgebra of the affine nilCoxeter algebra, in which the affine Schubert classes correspond to the non-commutative $k$-Schur functions, and so we now define the affine nilCoxeter algebra and identify this distinguished basis.

\begin{Def} The {\bf affine nilCoxeter algebra} $(\mathbb A_\text{af})_0$ is generated by $1$ and elements $A_i$ for $i\in \{0,1,...,n-1\}$ satisfying the following relations:
\begin{equation}
\begin{aligned}
A_i^2&=0  \\
A_i A_j &= A_j A_i	&& \iff (i - j)\operatorname{mod} n \not \in \{-1, 1\} \\
A_i A_{i+1} A_i &= A_{i+1} A_i A_{i+1}&&\text{ for all $i$ with indices modulo n}.
\end{aligned}
\end{equation}
More generally, since the simple generators $s_i \in S_n$ satisfy the same relations, we write $A_{w} = A_{w_1}A_{w_2}\cdots A_{w_p}$ where $w= s_{w_1}\cdots s_{w_p}$ or $w=(w_1,...,w_p)$,  is a reduced expression or \textbf{word}, respectively, for $w \in S_n$.
\end{Def}

Note that if we also include generators indexed by the weight lattice together with the corresponding relations, one obtains the nilHecke ring of Kostant and Kumar \cite{KK}. 

\begin{Def} Let $A$ be an algebra with generators $u_0, u_1,..., u_{n-1}$ that satisfy 
\begin{equation}
u_{i}u_{j} = u_{j}u_i \iff (i - j)\operatorname{mod} n \not \in \{-1, 1\},
\end{equation}
with indices taken modulo $n$. Let  $I=\{i_1,...,i_r\} \subsetneq \ZZ/n\ZZ$ be any proper subset.  We define the {\bf cyclically increasing} (resp.\ {\bf cyclically decreasing}) element of $A$ corresponding to $I$, denoted $u_I$ (resp.\ $u_I'$), to be the unique element satisfying 
$u_I = u_{i_1}u_{i_2} \cdots u_{i_r}$ (resp.\ $u_I'= u_{i_1} u_{i_2} \cdots u_{i_r}$) if and only if $i_a = i_b + 1$ implies $a >b$ (resp.\ $b > a$).
\end{Def}

This notion of cyclically increasing and decreasing elements in $(\mathbb A_\text{af})_0$ allows us to generalize the notion of the homogeneous and elementary symmetric functions, which form the basis for an important subalgebra of the affine nilCoxeter algebra.

\begin{Def}[Chapter 3, Section 8.2 \cite{kschur}]\label{hDef} For any $r \in \{ 1,...,n-1\}$, define the {\bf non-commutative homogeneous} and {\bf elementary symmetric functions} in $(\mathbb A_\text{af})_0$ by
\begin{equation} \label{kschurh}
{\bf \tilde h}_r = \sum_{\scriptsize\begin{array}{c}\text{ $w \in \tilde S_n$, $\len(w) = r$} \\ \text{$A_w$ is cyclically decreasing} \end{array}} A_w
\qquad \text{and} \qquad
{\bf \tilde e}_r = \sum_{\scriptsize\begin{array}{c}\text{ $w \in \tilde S_n$, $\len(w) = r$} \\ \text{$A_w$ is cyclically increasing} \end{array}} A_w.
\end{equation}
\end{Def}

\begin{ex} For $n = 4$, we have
	\begin{equation*}
	\begin{aligned}
	{\bf \tilde e}_1 = {\bf \tilde h}_1 &= A_0 + A_1 + A_2 + A_3 \\
	{\bf \tilde h}_2 & = A_{10}+A_{21}+A_{32} +A_{03} +A_{20} +A_{31}\\
	{\bf \tilde h}_3 &= A_{210}+A_{321}+A_{032} + A_{103}.
	\end{aligned}
	\end{equation*}
\label{haffnh}
\end{ex}

We now use the $k$-Schur functions discussed in the previous section to define the non-commutative $k$-Schur functions by replacing each of the homogeneous symmetric functions $h_i$ with their non-commutative counterpart ${\bf \tilde h}_i$.

\begin{Def}[Chapter 3, Section 8.2 \cite{kschur}] \label{noncomkschur}
Let $s_\lambda^{(k)} = f( h_1,...,h_k) $ be a $k$-Schur function with $f(x_1,..,x_k) \in \ZZ[x_1,...,x_k]$. The {\bf non-commutative $k$-Schur function} ${\bf s}_\lambda^{(k)} \in (\mathbb A_\text{af})_0$ is defined to be ${\bf s}_\lambda^{(k)} = f({\bf \tilde h}_1,...,{\bf \tilde h}_k)$. That is, simply express $s_\lambda^{(k)}$ in terms of $h_i$, and then replace $h_i$ by ${\bf \tilde h_i}$.
\end{Def}

\begin{ex} Recall from Eq.~\eqref{kSchurExEq} in Example \ref{commkschurex} that $s_{(1,1)}^{(4)} =h_1^2-h_2$, $s_{(3,1,1)}^{(4)} = h_3(h_1^2-h_2)$, and $s_{(4,3,1,1)}^{(4)}=h_4h_3(h_1^2-h_2)$. Then,
	${\bf s}_{(1,1)}^{(4)} ={\bf \tilde h}_1^2-{\bf \tilde h}_2$, ${\bf s}_{(3,1,1)}^{(4)} = {\bf \tilde h}_3({\bf \tilde h}_1^2-{\bf \tilde h}_2)$, and ${\bf s}_{(4,3,1,1)}^{(4)}={\bf \tilde h}_4{\bf \tilde h}_3({\bf \tilde h}_1^2-{\bf \tilde h}_2)$. Substituting in ${\bf \tilde h}_i$, we can write these non-commutative $k$-Schurs in terms of the generators $A_i$. For example, using Definition \ref{hDef}, we have
	\begin{equation*}
	{\bf s}_{(1,1)}^{(4)} = A_{01} + A_{12} + A_{23} + A_{34} + A_{40} +A_{02}  + A_{03} 
	+ A_{13} + A_{14} + A_{24}.
	\end{equation*}
\label{noncommkschur}
\end{ex}

\begin{prop}[Proposition 8.8, Theorem 8.9 \cite{kschur}]\label{AffFominStanley}
The subalgebra $\mathbb{B} \subset (\mathbb A_\text{af})_0$ generated by ${\bf \tilde h_1, \dots, \tilde h_{n-1}}$ is isomorphic to the subalgebra $\Lambda^{(k)}$ of symmetric functions.  Further, the non-commutative $k$-Schur functions ${\bf s}_\lambda^{(k)}$ form a basis of this subalgebra.
\end{prop}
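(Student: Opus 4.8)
The plan is to exhibit a graded $\ZZ$-algebra isomorphism
\begin{equation*}
\phi \colon \Lambda^{(k)} \longrightarrow \mathbb B, \qquad \phi(h_r) = {\bf \tilde h}_r \ \ (1 \le r \le k),
\end{equation*}
and then read off the basis statement. Since $\Lambda^{(k)} = \ZZ[h_1,\dots,h_k]$ is a polynomial ring, the assignment $h_r \mapsto {\bf \tilde h}_r$ extends to a ring homomorphism \emph{precisely when} ${\bf \tilde h}_1,\dots,{\bf \tilde h}_k$ pairwise commute in $(\mathbb A_\text{af})_0$; this same commutativity is also what makes Definition \ref{noncomkschur} unambiguous, so that ${\bf s}_\lambda^{(k)} = \phi(s_\lambda^{(k)})$. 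Granting that, $\phi$ is surjective onto $\mathbb B$ by the very definition of $\mathbb B$, it is graded because ${\bf \tilde h}_r$ is homogeneous of length $r$, and everything reduces to two claims: (i) the ${\bf \tilde h}_r$ commute; (ii) $\phi$ is injective. Given (i) and (ii), $\phi$ carries the $k$-Schur basis $\{s_\lambda^{(k)}\}$ of $\Lambda^{(k)}$ bijectively onto $\{{\bf s}_\lambda^{(k)}\}$, which is therefore a $\ZZ$-basis of $\mathbb B$.

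For (i) I would work in the $\ZZ$-basis $\{A_w : w \in \tilde S_n\}$ of $(\mathbb A_\text{af})_0$ and compare the expansions of ${\bf \tilde h}_r {\bf \tilde h}_s$ and ${\bf \tilde h}_s {\bf \tilde h}_r$. A product $A_u A_v$ with $u,v$ cyclically decreasing of lengths $r,s$ either vanishes (when juxtaposing the two cyclically decreasing words forces a repeated letter that cannot be straightened away) or, after applying the far-commutation and braid relations, straightens to a single basis element $A_{u'v'}$ with $u',v'$ cyclically decreasing of lengths $s,r$. The content of commutativity is that this straightening procedure sets up a weight-preserving bijection between the pairs $\{(u,v)\}$ and $\{(u',v')\}$ which matches multiplicities term by term; equivalently, one may package the ${\bf \tilde h}_r$ into a generating function $\sum_{r\ge 0}{\bf \tilde h}_r x^r$ and prove directly that two such series, in independent variables $x$ and $y$, commute. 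I expect this to be the main obstacle: it is the only genuinely non-formal input, and it is exactly the point where the cyclic (affine) structure of $\tilde S_n$, rather than the linear structure of $S_n$, does real work.

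For (ii) I would fix $d \ge 0$ and show that the square matrix
\begin{equation*}
M_d = \bigl( \,[A_w]\,{\bf s}_\lambda^{(k)} \,\bigr)_{\lambda,\, w}, \qquad |\lambda| = d \text{ (}k\text{-bounded)},\ \ w \in \tilde S_n^0,\ \ell(w) = d,
\end{equation*}
is invertible over $\ZZ$, where $[A_w]$ denotes the coefficient of $A_w$; the two index sets have equal size by Theorem \ref{corekbounded}. To each $k$-bounded $\lambda$ attach the minimal coset representative $w(\lambda) \in \tilde S_n^0$ via the $k$-bounded $\leftrightarrow$ $n$-core $\leftrightarrow$ $\tilde S_n^0$ bijections, fix the order on such $w$ induced by containment of $n$-cores (or any linear refinement), and prove by induction on $d$, using Lemma \ref{hdecomp} and the weak Pieri rule of Theorem \ref{WeakPieri}, that
\begin{equation*}
{\bf s}_\lambda^{(k)} = A_{w(\lambda)} + \sum_{v} c^{v}_{\lambda}\, A_{v}, \qquad\text{with } [A_{w(\mu)}]\,{\bf s}_\lambda^{(k)} = 0 \text{ whenever } w(\mu) \not\le w(\lambda),\ \mu\ne\lambda.
\end{equation*}
The base case is ${\bf \tilde h}_r = {\bf s}_{(r)}^{(k)}$, whose $\tilde S_n^0$-part one checks by hand, and the inductive step aligns the box-adding combinatorics of the weak Pieri rule with box-adding on $n$-cores. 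This unitriangularity forces $\det M_d = \pm 1$, so the degree-$d$ functions ${\bf s}_\lambda^{(k)}$ are $\ZZ$-linearly independent in $(\mathbb A_\text{af})_0$; since they span $\mathbb B = \phi(\Lambda^{(k)})$ and $\{s_\lambda^{(k)}\}$ is already a $\ZZ$-basis of $\Lambda^{(k)}$, the graded map $\phi$ is injective, finishing the proof. The remaining work — tracking which $A_v$ occur and checking that the chosen order is well-founded — is routine bookkeeping once (i) is available.
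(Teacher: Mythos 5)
The paper does not actually prove this proposition: it is imported wholesale from \cite{kschur} (the underlying results are due to Lam and Lapointe--Morse), so there is no in-paper argument to compare you against. Judged on its own terms, your architecture is the right one and matches how the cited proofs are organized: commutativity of the ${\bf \tilde h}_r$ makes $\phi$ a well-defined surjective graded ring map onto $\mathbb B$ (surjective by the paper's definition of $\mathbb B$), and a triangularity statement in the $\{A_w\}$-basis gives injectivity and hence the basis claim.

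However, both pivotal steps are left as claims rather than proofs, and the second is underestimated. For (i), the assertion that every nonvanishing product $A_uA_v$ straightens to a unique $A_{u'v'}$ with $u',v'$ cyclically decreasing of lengths $s,r$, \emph{and} that the vanishing terms pair off so that multiplicities match, is the entire content of Lam's commutativity theorem; you have restated it, not established it. For (ii), the inductive step requires knowing that for $w(\mu)\in\tilde S_n^0$ the Grassmannian part of ${\bf \tilde h}_r\, A_{w(\mu)}$ is exactly $\sum A_{w(\nu)}$ with $\nu$ ranging over the shapes appearing in the weak Pieri rule of Theorem \ref{WeakPieri}. (You do get for free that left multiplication by $A_u$ sends non-Grassmannian terms to non-Grassmannian terms, since a reduced word ending in $s_i$ with $i\neq 0$ still does so after a length-additive left concatenation, so only the leading term $A_{w(\mu)}$ contributes to the Grassmannian part.) But the identification of weak strips on $k$-bounded partitions with length-additive cyclically decreasing left factors on $\tilde S_n^0$ is precisely the Lapointe--Morse/Lam correspondence that underlies Theorems \ref{T:jBasisExpansion} and \ref{jBasiskSchur}; calling it ``routine bookkeeping'' conceals the main theorem. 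Note also that the true statement is stronger than your unitriangularity: by Theorem \ref{T:jBasisExpansion} the \emph{only} Grassmannian term occurring in ${\bf s}_\lambda^{(k)}$ is $A_{w(\lambda)}$, with coefficient $1$, so your matrix $M_d$ is in fact the identity --- though proving even your weaker version requires essentially the same combinatorial input. In summary: a sound outline with the hard point correctly located in (i) but mislocated in (ii), and incomplete as a proof.
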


The subalgebra $\mathbb{B}$ defined in Proposition \ref{AffFominStanley} is called the {\bf affine Fomin-Stanley algebra}.

\subsubsection{Affine Fomin-Stanley algebra and the $j$-basis}

The affine Fomin-Stanley algebra has another basis, which is often referred to as the {\bf $j$-basis}.  
\begin{thm}[Theorem 8.2 \cite{kschur}]\label{T:jBasisExpansion}
There is a basis $\{ j_w^0 \mid w \in \tilde{S}^0_n\}$ for $\mathbb{B}$ which is uniquely defined by the following property
\begin{equation}\label{jbaseform}
j_w^0 = A_w + \sum_{\substack{v \not \in \tilde S_n^0 \\  \len(w) = \len(v)}} c^v_{w} A_v,
\end{equation}
for some constants $c^v_{w} \in \ZZ$, and $j_w^0$ is the unique $j$-basis element containing the term $A_w$. 
\end{thm}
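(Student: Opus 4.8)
The plan is to produce the $j$-basis as the preimage of $\{A_w : w\in\tilde S_n^0\}$ under a natural projection, so everything hinges on identifying the kernel of that projection. Equip $(\mathbb A_\text{af})_0$ with its length grading; then $\mathbb B=\bigoplus_r\mathbb B^{(r)}$ is a graded subalgebra, since each ${\bf\tilde h}_i$, hence each ${\bf s}_\lambda^{(k)}$, is homogeneous (of degree $|\lambda|$). Let $R\subset(\mathbb A_\text{af})_0$ be the right ideal generated by $A_1,\dots,A_{n-1}$. From $A_uA_i=A_{us_i}$ when $\ell(us_i)=\ell(u)+1$ and $A_uA_i=0$ otherwise, together with the fact that $v\in\tilde S_n^0$ iff $\ell(vs_i)>\ell(v)$ for all $i\in\{1,\dots,n-1\}$, one checks directly that $R=\operatorname{span}_\ZZ\{A_v:v\notin\tilde S_n^0\}$: if $vs_i<v$ with $i\neq0$ then $A_v=A_{vs_i}A_i\in R$, and conversely every product $A_uA_i$ with $i\neq0$ is either $0$ or an $A_{us_i}$ with $us_i\notin\tilde S_n^0$. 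Hence the graded coordinate projection
\[
\pi:(\mathbb A_\text{af})_0\longrightarrow\bigoplus_{w\in\tilde S_n^0}\ZZ A_w,\qquad A_v\longmapsto A_v\text{ if }v\in\tilde S_n^0,\qquad A_v\longmapsto 0\text{ if }v\notin\tilde S_n^0,
\]
is precisely reduction modulo $R$.

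I would next reduce the theorem to a single claim: that $\pi$ restricts to an isomorphism of free $\ZZ$-modules $\pi|_{\mathbb B^{(r)}}\colon\mathbb B^{(r)}\xrightarrow{\ \sim\ }\bigoplus_{w\in\tilde S_n^0,\,\ell(w)=r}\ZZ A_w$ in each degree. The ranks already match: Proposition~\ref{AffFominStanley} supplies a $\ZZ$-basis $\{{\bf s}_\lambda^{(k)}\}$ of $\mathbb B$ indexed by $k$-bounded partitions, and Theorem~\ref{corekbounded} matches the size-$r$ such partitions bijectively with $\{w\in\tilde S_n^0:\ell(w)=r\}$. Granting the isomorphism claim, set $j_w^0:=(\pi|_{\mathbb B^{(\ell(w))}})^{-1}(A_w)$; these form a $\ZZ$-basis of $\mathbb B$. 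Writing $j_w^0=\sum_v c_w^v A_v$ and applying $\pi$ forces $c_w^v=\delta_{v,w}$ for all $v\in\tilde S_n^0$, so $j_w^0=A_w+\sum_{v\notin\tilde S_n^0,\,\ell(v)=\ell(w)}c_w^v A_v$ with $c_w^v\in\ZZ$, which is exactly the form in \eqref{jbaseform} (the constraint $\ell(v)=\ell(w)$ being homogeneity). Uniqueness follows at once: any element of $\mathbb B$ of that shape has $\pi$-image $A_w$, hence equals $j_w^0$ since $\pi|_\mathbb B$ is injective. And because $[A_w]j_{w'}^0=[A_w]\pi(j_{w'}^0)=\delta_{w,w'}$ for $w,w'\in\tilde S_n^0$, the element $j_w^0$ is the only basis member in which $A_w$ occurs.

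The remaining isomorphism claim is the crux. It splits into injectivity of $\pi|_\mathbb B$ (that is, $\mathbb B\cap R=0$) and the statement that $\pi(\mathbb B^{(r)})$ is the \emph{full} lattice $\bigoplus_w\ZZ A_w$, not a proper sublattice. I would obtain both from a unitriangularity property of the non-commutative $k$-Schur functions relative to the basis $\{A_w:w\in\tilde S_n^0\}$: for an appropriate partial order $\preceq$ on $\tilde S_n^0$ — pulled back, say, from dominance order on $k$-bounded partitions or from containment of the associated $n$-cores — the coefficient $[A_w]\,{\bf s}_{\lambda(w)}^{(k)}$ equals $1$, while $[A_{w'}]\,{\bf s}_{\lambda(w)}^{(k)}=0$ for every $w'\in\tilde S_n^0$ with $w'\not\succeq w$. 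This makes the square matrix $\bigl([A_{w'}]\,{\bf s}_{\lambda(w)}^{(k)}\bigr)_{w,w'}$ over the length-$r$ indices unitriangular, hence unimodular, so $\pi$ carries the $\ZZ$-basis $\{{\bf s}_{\lambda(w)}^{(k)}\}$ of $\mathbb B^{(r)}$ onto a $\ZZ$-basis of $\bigoplus_w\ZZ A_w$, giving the isomorphism.

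Establishing this unitriangularity — equivalently, $\mathbb B\cap R=0$ together with surjectivity of $\pi|_\mathbb B$ — is the one genuinely nontrivial step; the grading bookkeeping and the identification $\ker\pi=R$ are routine. The property is the purely algebraic reflection of the fact that the non-commutative $k$-Schur functions represent the Schubert classes of the affine Grassmannian inside those of the affine flag variety, so that $\{A_w:w\in\tilde S_n^0\}$ receives the affine-Grassmannian homology Schubert basis, which is pinned down by exactly this leading-term condition. Alternatively it can be deduced combinatorially from the expansion of the affine Stanley symmetric functions $\tilde F_w$ $(w\in\tilde S_n^0)$ in the monomial basis: those $\tilde F_w$, together with the monomial symmetric functions $m_\mu$ having all parts $\le n-1$, are two bases of one graded space, whose transition matrix is — under the identification $\mathbb B\cong\Lambda^{(k)}$ of Proposition~\ref{AffFominStanley} — the matrix of $\pi|_\mathbb B$ in the bases of ${\bf\tilde h}$-monomials and of the $A_w$, so its invertibility is precisely what the argument requires.
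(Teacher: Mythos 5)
First, note that the paper does not prove this statement: it is imported verbatim as Theorem~8.2 of \cite{kschur}, so there is no in-paper proof to compare yours against, and your attempt has to be judged on its own. Your scaffolding is sound: identifying $\ker\pi$ with $\operatorname{span}_{\ZZ}\{A_v : v\notin\tilde S_n^0\}$ via $A_v=A_{vs_i}A_i$ is correct (though this is the \emph{left} ideal $\sum_i(\mathbb A_{\text{af}})_0A_i$, not a right ideal), the grading argument is fine, and once one grants that $\pi|_{\mathbb B^{(r)}}$ is an isomorphism onto $\bigoplus_{w\in\tilde S_n^0,\,\ell(w)=r}\ZZ A_w$, your derivation of the form \eqref{jbaseform} and of uniqueness is clean and complete.

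The genuine gap is that this isomorphism claim \emph{is} the theorem --- injectivity of $\pi|_{\mathbb B}$ is exactly the uniqueness assertion and surjectivity exactly the existence assertion --- and you do not prove it. The rank count buys nothing on its own: without injectivity it says nothing, and even with injectivity an equal-rank sublattice of $\bigoplus_w\ZZ A_w$ need not be the whole lattice, so integrality of the $c_w^v$ would still be open. You defer everything to a ``unitriangularity'' of the $[A_{w'}]\,{\bf s}^{(k)}_{\lambda(w)}$, but the two justifications you offer both import theorems at least as deep as the one being proved: the statement that the ${\bf s}^{(k)}_{\lambda(w)}$ represent the affine Schubert classes with leading term $A_w$ is essentially Theorem \ref{jBasiskSchur} (Lam's identification $j_w^0={\bf s}^{(k)}_{\lambda(w)}$), which in the standard development is proved \emph{after} and \emph{using} the existence of the $j$-basis, so the argument is circular as organized; and the alternative via affine Stanley symmetric functions requires knowing that $\{\tilde F_w : w\in\tilde S_n^0\}$ is a $\ZZ$-basis of the relevant quotient of $\Lambda$, another unproved input. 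The actual proof in the literature avoids $k$-Schur functions entirely at this stage: it uses the definition of $\mathbb B$ as the subalgebra of the nilHecke ring commuting appropriately with the polynomial part to show directly that an element of $\mathbb B$ vanishing on all $A_w$ with $w\in\tilde S_n^0$ is zero, and then constructs the $j_w^0$ by a leading-term induction. As written, your proposal is a correct reduction of the theorem to its entire content, not a proof of it.
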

 The $j$-basis is the one which Peterson identifies with the Schubert basis for $H_*(\mathcal{G}r)$, and Lam then proves in \cite{LamkSchur} that the $j$-basis and the basis of non-commutative $k$-Schurs coincide.

\begin{thm}[Theorems 7.1 and 7.4 \cite{LamkSchur}]\label{jBasiskSchur}
For $w \in \tilde{S}_n^0$, let $\lambda(w)$ denote the $k$-bounded partition corresponding to $w$.  As elements of the affine Fomin-Stanley algebra $\mathbb{B}$,
\[ j^0_w = {\bf s}_{\lambda(w)}^{(k)}.\]  Moreover, both bases represent the Schubert classes under the isomorphism $H_*(\mathcal{G}r) \cong \mathbb{B}$.
\end{thm}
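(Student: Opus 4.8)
The plan is to deduce $j^0_w = {\bf s}_{\lambda(w)}^{(k)}$ from a Pieri-type identity in the affine Fomin--Stanley algebra $\mathbb{B}$. By Proposition \ref{AffFominStanley} the assignment $h_r \mapsto {\bf \tilde h}_r$ is an isomorphism $\Lambda^{(k)} \xrightarrow{\sim} \mathbb{B}$, and by Definition \ref{noncomkschur} it sends $s_\lambda^{(k)} \mapsto {\bf s}_\lambda^{(k)}$; in particular ${\bf s}_{\lambda(w)}^{(k)} \in \mathbb{B}$. By the uniqueness clause of Theorem \ref{T:jBasisExpansion} it therefore suffices to show that the $A$-expansion of ${\bf s}_{\lambda(w)}^{(k)}$ has the shape $A_w + \sum_{v \notin \tilde{S}_n^0} c^v_w A_v$ --- equivalently, that the isomorphism above carries the $j$-basis onto the $k$-Schur basis under the bijection $j^0_w \leftrightarrow s_{\lambda(w)}^{(k)}$. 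Because the $k$-Schur functions are defined inside $\Lambda^{(k)}$ by the normalization $s_{(r)}^{(k)} = h_r$ together with the weak Pieri rule of Theorem \ref{WeakPieri} (which, via the recursion of Lemma \ref{hdecomp}, determines them uniquely), it is enough to verify that the transported $j$-basis satisfies the same normalization and the same Pieri rule.

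For the normalization I would show ${\bf \tilde h}_r = j^0_{w_r}$, where $w_r = s_{r,\,r-1,\dots,1,0} \in \tilde{S}_n^0$, and that $\lambda(w_r) = (r)$. By Definition \ref{hDef}, ${\bf \tilde h}_r$ is a sum of terms $A_u$ over cyclically decreasing $u$ of length $r$, each with coefficient $1$; invoking Theorem \ref{T:jBasisExpansion} again, what remains is the combinatorial fact that $w_r$ is the only such $u$ lying in $\tilde{S}_n^0$. This is a short check: the minimal-coset-representative condition $\ell(us_i) > \ell(u)$ for $i \neq 0$ forces the underlying index set of $u$ to be $\{0,1,\dots,r-1\}$ read in cyclically decreasing order, and running the resulting word through the core/word correspondence recalled after Theorem \ref{corekbounded} yields the single row $(r)$.

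For the recursive step I would establish the Pieri rule ${\bf \tilde h}_r \cdot j^0_w = \sum_{w'} j^0_{w'}$, with $w'$ running over exactly those elements of $\tilde{S}_n^0$ for which $\lambda(w')$ is obtained from $\lambda(w)$ by adding $r$ boxes with no two in one column and $\lambda(w')^{\omega_k}$ is obtained from $\lambda(w)^{\omega_k}$ by adding $r$ boxes with no two in one row --- precisely the rule of Theorem \ref{WeakPieri}. The ``column'' half I would get by expanding $j^0_w = A_w + (\text{non-Grassmannian terms})$ and ${\bf \tilde h}_r = \sum_u A_u$ over cyclically decreasing $u$ of length $r$, and tracking which Grassmannian terms can survive in the product: the non-Grassmannian part of $j^0_w$ produces only non-Grassmannian terms, while left multiplication of $A_w$ by a cyclically decreasing element of length $r$, followed by projection onto $\tilde{S}_n^0$-indexed terms, translates under the $n$-core model into adding a horizontal $r$-strip, each admissible $A_{w'}$ occurring with coefficient $1$. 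To cut the answer down to the full rule, I would use the Dynkin-diagram automorphism of $(\mathbb A_\text{af})_0$ given by $A_i \mapsto A_{-i}$ (indices mod $n$): it preserves $\tilde{S}_n^0$, interchanges ${\bf \tilde h}_r$ with ${\bf \tilde e}_r$, and by Theorem \ref{T:jBasisExpansion} together with the description of $k$-conjugation on reduced words preceding Theorem \ref{kconjinv} it sends $j^0_w \mapsto j^0_{w''}$ with $\lambda(w'') = \lambda(w)^{\omega_k}$. Conjugating the ``column'' rule by this automorphism gives the analogous ``row'' rule governing multiplication by ${\bf \tilde e}_r$; expanding ${\bf \tilde e}_r$ as a polynomial in the ${\bf \tilde h}_i$ (via the Newton identities, valid in $\mathbb{B}$ since $\mathbb{B} \cong \Lambda^{(k)}$) and comparing the two descriptions forces ${\bf \tilde h}_r \cdot j^0_w$ onto exactly the terms meeting both constraints.

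The hard part is this Pieri computation, and in particular the bookkeeping identifying ``left multiply by a cyclically decreasing element of length $r$ and discard the non-Grassmannian terms'' with ``add a horizontal $r$-strip'' on $k$-bounded partitions, while confirming that no extra Grassmannian terms appear and that nothing cancels. Once it is done, the transported $j$-basis obeys the defining normalization and weak Pieri rule of the $k$-Schur functions, so $j^0_w = {\bf s}_{\lambda(w)}^{(k)}$; the final assertion --- that both bases represent the affine Schubert classes under $H_*(\mathcal{G}r) \cong \mathbb{B}$ --- is then immediate, since that isomorphism is built to carry the affine Schubert basis to the $j$-basis.
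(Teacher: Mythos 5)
This statement is imported wholesale from Lam's work (it is Theorems 7.1 and 7.4 of \cite{LamkSchur}); the paper records it as background and gives no proof, so there is no internal argument to compare yours against. Judged on its own terms, your skeleton is the standard and correct one: an element of $\mathbb B$ is determined by its $\tilde S_n^0$-indexed terms (the uniqueness clause of Theorem \ref{T:jBasisExpansion}), so it suffices to show that the $j$-basis, transported along $h_r \mapsto {\bf \tilde h}_r$, satisfies the normalization $s^{(k)}_{(r)}=h_r$ and the weak Pieri rule of Theorem \ref{WeakPieri}, which by the triangular recursion of Lemma \ref{hdecomp} characterize the $k$-Schur basis. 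Your two preliminary checks are sound: left multiplication by $A_u$ does preserve the non-Grassmannian property (if $\ell(vs_i)<\ell(v)$ for some $i\neq 0$ and $\ell(uv)=\ell(u)+\ell(v)$, then $\ell(uvs_i)<\ell(uv)$), and the unique cyclically decreasing element of length $r$ lying in $\tilde S_n^0$ is $s_{r-1,\dots,1,0}$ --- note the off-by-one in your $w_r=s_{r,r-1,\dots,1,0}$, which as written has length $r+1$ and cannot equal ${\bf \tilde h}_r$ for degree reasons.

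The gap is that essentially all of the content of the theorem sits inside the step you label ``the hard part'' and do not carry out. You must prove that the $\tilde S_n^0$-indexed terms of ${\bf \tilde h}_r\cdot A_w$, namely the $A_{uw}$ with $u$ cyclically decreasing of length $r$, $\ell(uw)=\ell(u)+\ell(w)$, and $uw\in\tilde S_n^0$, are exactly the $A_{w'}$ with $\lambda(w')/\lambda(w)$ a weak $r$-strip, \emph{and} that each such $w'$ arises from a unique $u$; multiplicity one is not automatic (a priori two distinct cyclically decreasing factors could yield the same $w'$, producing a coefficient of $2$), and without it the recursion does not close. Separately, matching the resulting core-level description with the two-condition form of Theorem \ref{WeakPieri} (horizontal strip on $\lambda$ together with vertical strip on $\lambda^{\omega_k}$) is itself a nontrivial combinatorial theorem of Lapointe and Morse; your proposed route via the automorphism $A_i\mapsto A_{-i}$, $k$-conjugation, and Newton's identities is a plausible substitute but is only asserted. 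These unproved steps are precisely what Lam establishes in \cite{LamkSchur}, so what you have is a correct plan for reproving the cited theorem rather than a proof of it.
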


\begin{ex} Rewriting the non-commutative $4$-Schurs from Example \ref{noncommkschur}, we have that
	\begin{equation*}
	\begin{aligned}
	j_{s_0}^0 & = {\bf s}_{(1)}^{(4)} = {\bf \tilde h}_1= A_0 + A_1+A_2+A_3+A_4 \\ 
	 j_{s_{40}}^0 & = {\bf s}_{(1,1)}^{(4)} = {\bf \tilde h}^2_1-{\bf \tilde h}_2 = A_{01} + A_{12} + A_{23} + A_{34} + A_{40} +A_{02}  + A_{03} 
	+ A_{13} + A_{14} + A_{24}\\ 
	j_{s_{34210}}^0 & = {\bf s}_{(3,1,1)}^{(4)} ={\bf \tilde h}_3 ( {\bf \tilde h}^2_1-{\bf \tilde h}_2)
	\qquad \text{and} \qquad
	j^0_{s_{210434210}}={\bf s}_{(4,3,1,1)}^{(4)} = {\bf \tilde h}_4 {\bf \tilde h}_3 ( {\bf \tilde h}^2_1-{\bf \tilde h}_2).
	\end{aligned}
	\end{equation*}
\end{ex}

Theorem \ref{jBasiskSchur} says that to perform calculations in the homology of the affine Grassmannian, it suffices to be able to compute products of the non-commutative $k$-Schur functions.  We thus adopt this purely algebraic and combinatorial approach to studying $H_*(\mathcal{G}r)$ throughout this paper.


\subsection{The Peterson isomorphism}\label{Sec:Peterson}

The Peterson isomorphism is an isomorphism between the (equivariant) homology of the affine Grassmannian of a simply-connected complex algebraic group $G$ and the (equivariant) quantum cohomology of $G/P$ where $P \subset G$ is a parabolic subgroup~\cite{LS10}. One of the primary goals of this paper is to provide a friendly guide to using the Peterson isomorphism in order to carry out calculations in quantum cohomology.  Since the reader familiar with the language of reductive groups over local fields and their associated affine Weyl groups can easily consult Sections 9 and 10 of \cite{LS10} for the original statement of the Peterson isomorphism, we elect to focus on the special case in which $G=SL_n$ and $P$ is a maximal parabolic subgroup to broaden accessibility.  

The majority of this section aims to reduce the statement of the Peterson isomorphism in full generality to the special case of a partial flag variety.  Section \ref{Sec:FullPeterson} concludes with a statement of the Peterson isomorphism which emphasizes how the $j$-basis for the affine Fomin-Stanley algebra can be used to do calculations in the quantum cohomology of the complete flag variety.  In Section \ref{Sec:PartialPeterson}, we state a version of the Peterson isomorphism for partial flag varieties, which we then specialize in Section \ref{PetIso} in order to explain how to use the $j$-basis to do quantum Schubert calculus for the Grassmannian.

\subsubsection{Root system of type $\tilde A_{n-1}$}\label{ars}

In order to state the Peterson isomorphism, we require some additional terminology associated to the underlying root system.  Since we are working in type $A$, the roots and coroots coincide, and so we phrase everything in terms of roots.  For an alternative similar treatment of the type $A$ root system preliminaries, we recommend Section 3 of \cite{BJV09}.  We refer the reader interested in other types to \cite{LS10} or \cite{kschur} for the more general development.  

 Let $\{\vec e_1,\dots,\vec e_n\}$ be the standard orthonormal basis of $\RR^n$.  The vectors $\alpha_i = \vec e_i -\vec e_{i+1}$ are called the {\bf simple roots} for type $A_{n-1}$. (Note that the Lie type $A_{n-1}$ should not be confused with the $(n-1)^{\text{st}}$ generator of the affine nilCoxeter algebra, which is also denoted $A_{n-1}$; it should always be clear from context which of the two is being discussed.)  The {\bf root lattice}, denoted by $Q$, is the $\ZZ$-span of the set $\Delta$ of simple roots.

Let $V \subsetneq \RR^n$ be the $(n-1)$-dimensional hyperplane of $\RR^n$ in which the $\alpha_i$ all reside. For a vector $v = c_1 \vec e_1 + ... + c_n \vec e_n \in \RR^n$ and a simple generator $s_i \in \tilde S_n$, we define 
\begin{equation}\label{E:S_nAction}
s_i\cdot v = 
\begin{cases}
c_1 \vec e_1 + \cdots + c_{i}\vec e_{i+1} + c_{i+1} \vec e_i + \cdots + c_n \vec e_n & \text{for}\ i \in \{1,2,...,n-1\}\\
(c_n+1) \vec e_1 + c_2 \vec e_2 + \cdots + c_{n-1} \vec e_{n-1} + (c_1-1) \vec e_{n} & \text{for}\ i=0.
\end{cases}
\end{equation}
For $w \in \tilde S_n$, choosing a reduced expression $w = s_{i_1}\cdots s_{i_r}$ and iterating this definition one generator at a time right to left provides an action of any $w \in \tilde S_n$ on any vector $v \in \RR^n$, which restricts to an action of $\tilde S_n$ on $V$.  Further restricting this action to one of $S_n$ on the simple roots $\Delta \subset V$ defines the set of all {\bf (finite) roots}
\begin{equation}
R = \{ w \cdot \alpha_i \mid w \in S_n, \alpha_i \in \Delta\}=\{\vec e_i-\vec e_j \mid i\ne j\}.
\end{equation}  
The set of {\bf positive roots} is defined as $R^+ = \{ \vec e_i - \vec e_j \mid i < j\}$.  The set of {\bf negative roots} is then defined to be $R^- = R\backslash R^+$. There is a distinguished root 
\begin{equation}
\theta = \sum_{i=1}^{n-1} \alpha_i = \vec e_1 - \vec e_n,
\end{equation}
 which is called the {\bf highest root}.

This action of the affine symmetric group on $\RR^n$ yields a realization of each element in $\tilde S_n$ as an affine transformation on $V$, which we now explain.  Define a hyperplane in $V$ indexed by a root $\alpha$ and an integer $p$ by
\begin{equation}
H_{\alpha,p} = \{ v \in V \mid \langle v, \alpha \rangle = p\},
\end{equation}
where $\langle \cdot, \cdot \rangle$ is the standard Euclidean inner product on $\RR^n$. We observe that the action of $s_i$ corresponds to a reflection across the hyperplane $H_{\alpha_i, 0}$, and acting by $s_0$ reflects across the hyperplane $H_{\theta,1}$. Defining
\[\mathcal H = \bigcup_{\alpha \in R^+, p\in \ZZ} H_{\alpha,p},\]
 the set
$V \backslash \mathcal H$ consists of disjoint convex regions called {\bf alcoves}. We will single out a particular alcove as the {\bf fundamental alcove}
\begin{equation}
\mathcal A_0 = \{ v \in V \mid 0 < \langle v, \alpha \rangle <1 \text{ for all $\alpha \in R^+$}\},
\end{equation}
which we can identify with the identity of the group $\tilde S_n$.

\begin{figure}[t!]
	\begin{centering}
\begin{overpic}[width=.9\columnwidth]{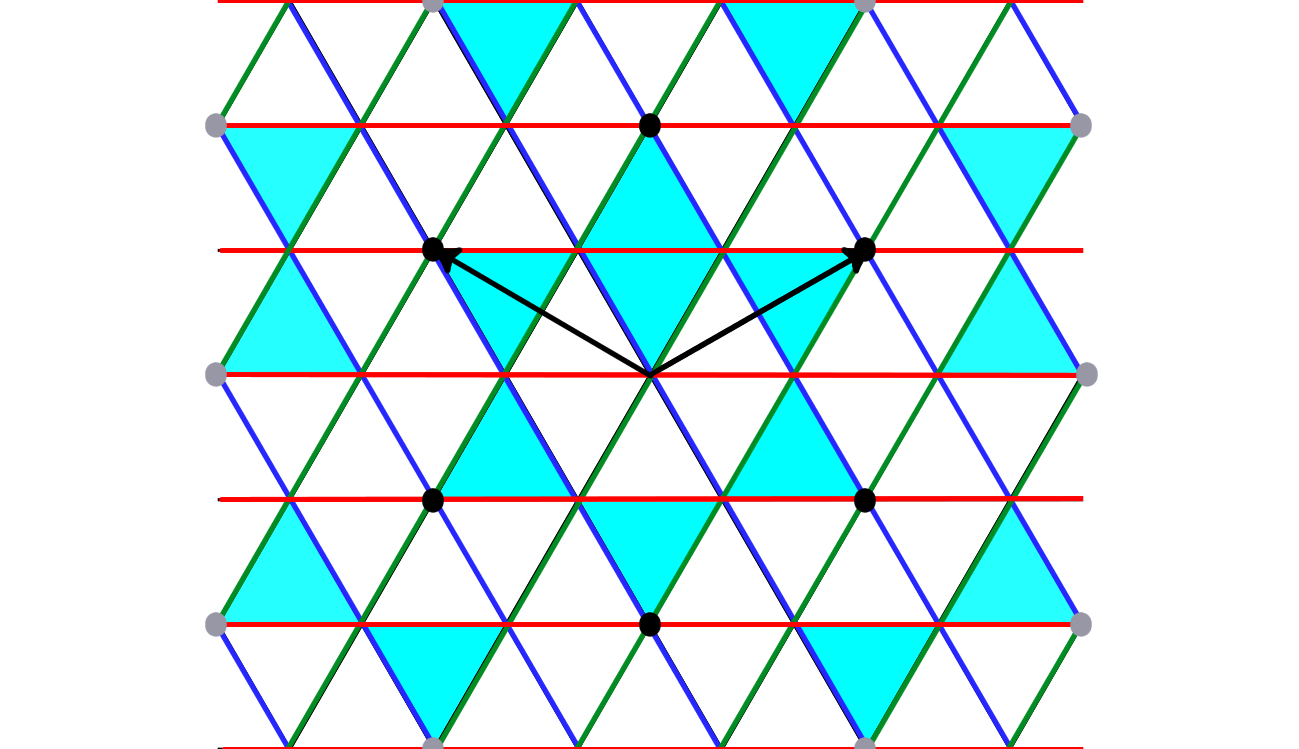}
\put(10,46){\Large $\textcolor{red}{H_{\theta,2}}$}
\put(9,37){\LARGE $\textcolor{red}{H_{\theta,1}}$}
\put(10,27){\Large $\textcolor{red}{H_{\theta,0}}$}
\put(48,34){\bf \Large $\mathcal A_0$}
\put(36,40){\bf \Large $\pmb \alpha_2$}
\put(70,40){\bf \Large $\pmb \alpha_1$}
\put(77,60){\Large $\textcolor[rgb]{.13,.54,.13}{H_{\alpha_2,-1}}$}
\put(65,60){\LARGE $\textcolor[rgb]{.13,.54,.13}{H_{\alpha_2,0}}$}
\put(54,60){\Large $\textcolor[rgb]{.13,.54,.13}{H_{\alpha_2,1}}$}
\put(15,60){\Large $\textcolor{blue}{H_{\alpha_1,-1}}$}
\put(27,60){\LARGE $\textcolor{blue}{H_{\alpha_1,0}}$}
\put(39,60){\Large $\textcolor{blue}{H_{\alpha_1,1}}$}
\put(48.5,40){\bf \Large$\pmb s_0$}
\put(36,21){\bf \Large$\pmb s_{120}$}
\put(47,21){\bf \Large$\pmb s_{121}$}
\put(58,21){\bf \Large$\pmb s_{210}$}
\put(46.8,16.2){\bf \Large$\pmb s_{1210}$}
\end{overpic}
\caption{The type $\tilde A_2$ root lattice, affine hyperplanes, and alcoves for $\tilde S_3$; see Example \ref{AlcoveEx}. }
\label{A2lattice}
\end{centering}
\end{figure}

Because acting on the roots by $s_i$ reflects over certain hyperplanes $H_{\alpha, p}$, we can think of the action of $s_i$, and consequently also of $w \in \tilde S_n$, as moving the fundamental alcove. Each alcove $\mathcal A$ then corresponds bijectively to the element $w \in \tilde S_n$ which maps $\mathcal A_0$ to
$\mathcal A$ when we reflect $\mathcal A_0$ over the sequence of hyperplanes associated to any reduced expression for $w$. Certain elements $w \in \tilde S_n$ simply translate $\mathcal A_0$ by a vector $\lambda \in Q$. We denote these {\bf translation elements} as $t_\lambda \in \tilde S_n$.  Note that this identification provides a natural embedding of the root lattice $Q \cong \{ t_\lambda \mid \lambda \in Q\}$ as a normal subgroup in $\tilde S_n$.  In fact, $\tilde S_n$ is the semi-direct product of this subgroup of translations with the finite symmetric group
\[ \tilde S_n \cong S_n \ltimes Q,\]
and we frequently use this decomposition to write $wt_\lambda \in \tilde S_n$, where $w \in S_n$ and $\lambda \in Q$.  Note that the action on roots corresponds to conjugation in the group, in the sense that $t_{w \cdot \alpha} =w t_{\alpha} w^{-1}$ for all $\alpha \in R$ and $w \in S_n$.

An element $\lambda \in Q$ is called {\bf dominant} if $\langle \lambda, \alpha_i \rangle >0$ for all $\alpha_i \in \Delta$, and we denote the set of all dominant elements by $Q^+$.  Similarly, an element $\lambda \in Q$ is called {\bf antidominant} if $\langle \lambda, \alpha_i \rangle < 0$ for all $\alpha_i \in \Delta$. The antidominant elements of the root lattice will play a special role in the Peterson isomorphism, and so we denote this set by $Q^-$. We refer to the set of translations $t_\lambda \in \tilde S_n$ where $\lambda \in Q^-$ as the {\bf antidominant translations}.

\begin{ex}\label{AlcoveEx}
In order to make this terminology more concrete, we draw and label the root lattice of type $\tilde A_2$ in Figure \ref{A2lattice}.  The finite roots corresponds to the vectors from the origin to the six black dots; other elements of the root lattice are indicated by gray dots.  We label the first few hyperplanes close to the origin; the family associated to the same positive root is drawn with the same color.  

The fundamental alcove $\mathcal A_0$ is based at the origin; reflecting across each of its faces corresponds to applying one of the three simple generators.  That is, the three generators $s_1, s_2,$ and $s_0$ act by reflecting over the three different colored hyperplanes $H_{\alpha_1,0}$, $H_{\alpha_2,0}$, and $H_{\theta,1}$ respectively, where $\theta = \alpha_1 + \alpha_2$ in this case. We label some alcoves with their corresponding element of $\tilde S_3$ using the shorthand $s_{i_1}s_{i_2}\cdots s_{i_r} = s_{i_1 i_2\cdots i_r}$. For example, the $s_{120}$ alcove is reached by successively reflecting the fundamental alcove $\mathcal A_0$ over the $H_{\theta,1}$, $H_{\alpha_2,0}$, and $H_{\alpha_1,0}$ hyperplanes in that order.   Each of the light blue triangles corresponds to a minimal length coset representative in $\tilde S_3/S_3$. 

The antidominant elements of the root lattice are those which lie ``below'' each of the hyperplanes $H_{\theta,0}$, $H_{\alpha_2,0}$, and $H_{\alpha_1,0}$ through the origin, where the fundamental alcove $\mathcal A_0$ is by definition ``above'' every hyperplane.  From this picture, for example, it is clear that the translation element $t_{-\theta}=t_{-\alpha_1 -\alpha_2}$ is the element $t_{-\theta} = s_{1210} \in \tilde S_3$, which is an antidominant translation.
\end{ex}


\subsubsection{The Peterson isomorphism for complete flags}\label{Sec:FullPeterson}

The {\bf complete flag variety} of type $A_{n-1}$ is the set of of all complete flags
\[ E = (\{0\} = E_0 \subset E_{1} \subset E_{2} \subset \cdots \subset E_{n-1} \subset E_n =  \CC^n), \]
where $\dim E_{i} = i$. This set is isomorphic to the quotient $SL_n(\CC)/B$, where $B$ is the subgroup of upper-triangular matrices.  Like the Grassmannian, the complete flag variety has the topological structure of a CW-complex given by the Schubert cells occurring in the Bruhat decomposition on $SL_n(\CC)$, and the Schubert cells are indexed by the permutations in $S_n$.  The cohomology $H^*(SL_n(\CC)/B)$ admits a $\ZZ$-basis of Schubert classes, and the quantum cohomology of the complete flag variety is a ring $QH^*(SL_n(\CC)/B)$ in which the Schubert classes form a $\ZZ[q_1, \dots, q_{n-1}]$-basis.  We can express any $\lambda \in Q^+$ in terms of the basis of simple roots as $\lambda = c_1\alpha_1 + \cdots + c_{n-1}\alpha_{n-1}$ for $c_i \in \ZZ_{\geq 0}$, and we denote by $q^\lambda = q_1^{c_1}\cdots q_{n-1}^{c_{n-1}}$ the degree corresponding to $\lambda$.  In quantum Schubert calculus for the complete flag variety, we seek formulas for expansions of all products 
\begin{equation}
\sigma_u*\sigma_v = \sum\limits_{\substack{w \in S_n, \\ \lambda \in Q^+}} c_{u,v}^{w,\lambda} q^\lambda \sigma_w,
\end{equation}
where $u, v \in S_n$ and $c_{u,v}^{w,\lambda} \in \ZZ_{\geq 0}$.  We remark that, in contrast to the situation for the quantum cohomology of the Grassmannian,  beyond special cases, no positive combinatorial formulas for the full flag three-point genus zero Gromow-Witten invariants $c_{u,v}^{w,\lambda}$ are known.

The Peterson isomorphism presents a method for doing quantum Schubert calculus using the combinatorics of the affine Fomin-Stanley algebra.  The Peterson isomorphism is between the homology of the affine Grassmannian and the quantum cohomology of the complete flag variety, appropriately localized.  On the side of $H_*(\mathcal{G}r)$, the multiplicatively closed set around which we localize is the set of homology classes for the antidominant translations.  On the quantum side, we must localize around each of the quantum parameters $q_1, \dots, q_{n-1}$.  We now state a simplified version of the original Peterson isomorphism which is best suited for using calculations on the affine side to do quantum Schubert calculus.

\begin{thm}[Peterson Isomorphism \cite{Pet}, \cite{LS10}] \label{FlagPeterson}
Suppose that $vt_\lambda \in \tilde{S}_n^0$, where $v \in S_n$ and $\lambda \in Q$. 
Then there is a $\ZZ$-algebra isomorphism
\begin{align*}
\Psi: H_*(\mathcal{G}r)[(j_{t_\mu}^0)^{-1} \mid \mu \in Q^-] & \longrightarrow QH^*(SL_n(\CC)/B)[q_i^{-1}]\\
j^0_{vt_\lambda} & \longmapsto q^{\lambda}\sigma_v.
\end{align*}
\end{thm}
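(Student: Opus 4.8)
The plan is to obtain Theorem~\ref{FlagPeterson} as the non-equivariant, $P = B$ specialization of the general Peterson isomorphism of Lam and Shimozono \cite{LS10}, so that the real work lies not in reproving the deep structural statement but in translating their formulation into the combinatorial model for $H_*(\mathcal{G}r)$ developed above. By Proposition~\ref{AffFominStanley} and Theorem~\ref{jBasiskSchur} it is equivalent to produce the asserted $\ZZ$-algebra isomorphism with the (localized) affine Fomin--Stanley algebra $\mathbb B$ as its source and the $j$-basis $\{j^0_w : w \in \tilde S_n^0\}$ in the role of the affine Schubert basis, and I would record this reduction explicitly at the outset.

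The first step is to set up the two localizations and check that they correspond. On the affine side one verifies that $\{j^0_{t_\mu} : \mu \in Q^-\}$ is a multiplicatively closed subset of $\mathbb B$: for strictly antidominant $\mu$ the element $t_\mu$ lies in $\tilde S_n^0$, lengths are additive in the sense that $\len(t_\mu) + \len(t_\nu) = \len(t_{\mu+\nu})$, and hence $j^0_{t_\mu} \cdot j^0_{t_\nu} = j^0_{t_{\mu+\nu}}$, so the localizing monoid is a copy of $(Q^-,+)$. Under the candidate assignment $j^0_{t_\mu} \mapsto q^\mu$ one then has both $q^\mu$ and its inverse $q^{-\mu}$ in the image, and since $Q^-$ spans a full-dimensional cone in $Q \otimes \RR$ these Laurent monomials generate the full group of Laurent monomials in $q_1, \dots, q_{n-1}$; thus inverting the $j^0_{t_\mu}$ matches inverting the $q_i$, and the target of \cite{LS10} in this case is exactly $QH^*(SL_n(\CC)/B)[q_i^{-1}]$. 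I would also note here that over this localization the notation $j^0_{vt_\lambda}$ extends unambiguously to all $v \in S_n$ and $\lambda \in Q$, not only to $vt_\lambda \in \tilde S_n^0$, via $j^0_{vt_\lambda} := j^0_{vt_{\lambda-\mu}} (j^0_{t_\mu})^{-1}$ for any $\mu \in Q^-$ with $vt_{\lambda-\mu} \in \tilde S_n^0$; a short computation with the multiplicativity above shows this is well defined and that $\{j^0_{vt_\lambda}\}_{v,\lambda}$ is a $\ZZ$-basis, which is what renders the basis-to-basis correspondence with $\{q^\lambda\sigma_v\}$ transparent.

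With the rings and bases aligned, the substantive content --- that $\Psi$ is a ring homomorphism and a bijection --- is what I would import wholesale from \cite{LS10}, where the map is constructed via the Kostant--Kumar nilHecke ring and the Peterson subalgebra and multiplicativity is proved by matching structure constants against the equivariant quantum Chevalley formula for $G/B$. The non-equivariant statement then follows by specializing the torus-equivariant parameters to zero; here one uses that each $j^0_{t_\mu}$ is sent to a monomial in the $q_i$, a nonzerodivisor in $QH^*(SL_n(\CC)/B)$, so that the specialization is compatible with the localizations and the map descends without collapsing.

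The main obstacle is the final bookkeeping: confirming that the translation-part-to-degree recipe of \cite{LS10} --- phrased in terms of the coweight lattice and a choice of (anti)dominant chamber, and depending on whether one writes elements of $\tilde S_n$ as $vt_\lambda$ or $t_\lambda v$ --- reduces, under the type-$A$ identification of roots with coroots from Section~\ref{ars}, to precisely $j^0_{vt_\lambda} \mapsto q^\lambda\sigma_v$ with $v$ and $\lambda$ the two factors in $\tilde S_n \cong S_n \ltimes Q$. Getting the signs and the side of the decomposition right, so that the antidominant translations (and not the dominant ones) are the invertible classes matching the localizing parameters, is where essentially all the care goes. If one instead wanted a genuinely self-contained argument, one would have to reprove the ring-homomorphism property by hand, matching a Pieri-type rule for the $j$-basis of $\mathbb B$ against the quantum Chevalley formula for $SL_n/B$; that is exactly the heart of \cite{LS10} and lies beyond the scope of an expository treatment.
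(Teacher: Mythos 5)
The paper offers no proof of Theorem \ref{FlagPeterson} at all: it is stated as a known result imported from Peterson's lectures and from Lam--Shimozono \cite{LS10}, with only a remark afterwards on the surprising existence of the non-equivariant shadow. Your plan --- defer the ring-isomorphism content to \cite{LS10} and spend the effort on matching localizations, bases, and the translation-to-degree bookkeeping --- is exactly the posture the paper takes, so the proposal is consistent with the paper's treatment (the only soft spot is your ``hence'' deducing $j^0_{t_\mu} j^0_{t_\nu} = j^0_{t_{\mu+\nu}}$ from length additivity alone, which really also rests on the Lam--Shimozono structure theory you are already importing).
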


 Note that if $\lambda \in Q^-$ is sufficiently antidominant, say whenever $\lambda \notin H_{\alpha_i,0}$ for any $\alpha_i \in \Delta$, then the element $vt_\lambda$ will automatically be a minimal length coset representative in $\tilde S_n/S_n$ for any $v \in S_n$.  In practice, the images of products of the elements $vt_{-\theta}$ where $\theta$ is the highest root completely determine the products of all of the Schubert classes in $QH^*(SL_n(\CC)/B)$.  Writing $\lambda = c_1\alpha_1 + \cdots + c_{n-1}\alpha_{n-1}$, if $\lambda \in Q^-$, then note that $c_i \in \ZZ_{\leq 0}$ for all $i \in \{1, \dots, n-1\}$.  Recalling the notation $q^\lambda = q_1^{c_1}\cdots q_{n-1}^{c_{n-1}}$, we see directly the need to localize around each of the quantum parameters, since all of the exponents arising in the image of the Peterson map will be nonpositive.

\begin{rmk}
From the algebraic and combinatorial standpoint, the fact that there is a non-equivariant version of the Peterson isomorphism as in Theorem \ref{FlagPeterson} is surprising.  Peterson's original proof, carried out in \cite{LS10}, verifies the equivariant quantum Chevalley-Monk formula on the images of the affine Schubert classes, which completely determines the product structure in $QH^*_T(G/P)$ by \cite{Mihalcea}.  Mihalcea's proof, however, relies on a double induction critically involving the degree of the polynomials in $\ZZ[\alpha_i]$ tracking the torus weights, which are simply unavailable in the non-equivariant setting.  This observation suggests an alternative explanation for Peterson's isomorphism, the proof of which would naturally leave a non-equivariant shadow.  There are many other perspectives on the Peterson isomorphism involving the Toda lattice, integrable systems, statistical mechanics, mirror symmetry, and other techniques in physics which may further explain the existence of this non-equivariant statement.
\end{rmk}

\subsubsection{Partial flag varieties}

In the remainder of this section, we adapt the treatment from \cite{LS10} in order to state the parabolic Peterson isomorphism for $QH^*(SL_n(\CC)/P)$, where $P \supseteq B$ is a standard parabolic subgroup; the reader interested in working with other Lie types is advised to directly reference Section 10 of \cite{LS10}.  In the case of $G=SL_n$, the quotient $G/P$ is a partial flag variety, which is indexed by a strictly increasing sequence ${\bf i} = (i_1, \dots, i_r) \in \NN^r$.  The {\bf partial flag variety for ${\bf i}$} is the set of all {\bf $r$-step flags}
\[ E = (\{0\} \subset E_{i_1} \subset E_{i_1} \subset \cdots \subset E_{i_r} \subset \CC^n), \]
where $\dim E_{i_j} = i_j$.  The two most important examples are the extreme cases of the complete flag variety $SL_n(\CC)/B$ with ${\bf i} = (1,2,\dots, n-1)$ studied in the previous subsection, and the Grassmannian with ${\bf i} = (m)$, which is the primary example we study in the next section.  Occasionally, it is useful for the flag vector ${\bf i}$ to indicate the existence of the zero subspace and the whole space as part of the flag, and so we define $i_0 = 0$ and $i_{r+1} = n$ by convention.

Each choice of $r$-step flag vector ${\bf i} = (i_1,...,i_r)$ for some $1 \le r \le n$ corresponds to a choice of standard parabolic subgroup $P \subset SL_n(\CC)$.  To be concrete, when $B$ is the subgroup of upper-triangular matrices, the parabolic subgroup $P$ is the subgroup of block upper-triangular matrices in which the blocks are of sizes $i_j -  i_{j-1}$ for $1 \leq j \leq r+1$ in that order.  Denote by $\Delta_P = \Delta \backslash \{ \alpha_{i_j} \mid 1 \leq j \leq r \}$ the simple roots corresponding to $P$.  For ease of notation, we occasionally refer exclusively to the indexing set $I_P = \{ m \in \NN \mid \alpha_m \in \Delta_P\}$, which is a subset of $I = \{ 1, \dots, n-1\}$. 

The parabolic subgroup $P$ also corresponds to a subset of finite roots
\begin{equation}\label{E:R_P}
R_P = \{  \vec e_{i} - \vec e_{j}  \mid i_{a-1} < i,j \le i_{a} \text{ for some } 1\leq a \leq r+1 \}.
\end{equation}
The roots in $R_P$ are precisely those finite roots which use only the elements $\alpha_{i_m} \in \Delta_P$ when they are expressed in terms of the basis $\Delta$ of simple roots.  By $R_P^+$ we denote the roots in $R_P$ which are also positive, and by $R_P^-$ those which are negative. The root lattice $Q_P$ associated to the parabolic $P$ is the $\ZZ$-span of the roots in $R_P$.  Define $\eta_P: Q \longrightarrow Q/Q_P$ to be the natural projection.  

The Weyl group $(S_n)_P$ for the Levi subgroup of $P$ is the product of the symmetric groups generated by the simple reflections $\{ s_{i_j} \mid 1 \leq j \leq r\}$ corresponding to $\Delta_P$.  That is, $(S_n)_P \cong S_{i_1} \times \cdots \times S_{i_j-i_{j-1}} \times \cdots \times S_{n-i_r}$.  The partial flag variety $SL_n(\CC)/P$ admits a decomposition into Schubert cells indexed by the elements of the quotient $S_n/(S_n)_P$.  We thus define $S_n^P$ to be the set of minimal length coset representatives in $S_n/(S_n)_P$.  As with the case of the complete flag variety in which case $I_P = \emptyset$, the Schubert classes indexed by $S_n^P$ form a $\ZZ[q_i : i \in I\backslash I_P]$-basis for $QH^*(SL_n(\CC)/P)$.  Writing $\lambda = \sum_{i \in I\backslash I_P} c_i\alpha_i \in Q/Q_P$, then we denote the corresponding quantum parameter by $q^\lambda = \prod_{i \in I \backslash I_P} q_i^{c_i}$.

\subsubsection{Affine roots in type $\tilde A_{n-1}$}

We now take the final steps towards stating the parabolic version of the Peterson isomorphism.  In the parabolic case, the isomorphism is between the homology of the affine Grassmannian modulo a certain ideal and the quantum cohomology of a partial flag variety, again suitably localized.  To define the ideal of affine Schubert classes with which we form the appropriate quotient, we require some additional terminology involving the affine roots in the type $\tilde A_{n-1}$ root system.

Define the {\bf null root} $\delta = \alpha_0 + \theta$ where $\alpha_0$ is formally added to our collection of roots $R$. We then define the set of {\bf affine roots} and {\bf positive/negative affine roots} as:
\begin{equation}
\begin{aligned}
R_{af} &= \{ \alpha + p \delta \mid \alpha \in R \text{ and } p \in \ZZ\}\\
R_{af}^+ &= \{ \alpha + p \delta \mid \alpha \in R \text{ and } p \in \ZZ_{>0}\} \cup R^+\\
R_{af}^- &= R_{af}\backslash R_{af}^+.
\end{aligned}
\end{equation}
We occasionally denote $\beta > 0$ and $\beta < 0$ for $ \beta \in R_{af}^+$ and $\beta \in R_{af}^-$, respectively.  Note that the positive affine roots $R^+_{af}$ are in natural bijection with the hyperplanes in $\mathcal H$. Denote the projection $\delta \mapsto 0$ on an affine root $\beta = \alpha+p\delta$ by $\overline{\beta} = \alpha \in R$.  Given a fixed parabolic subgroup $P$, we can also extend the definition of $R_P$ from \eqref{E:R_P} to the affine root system using this projection map
\begin{equation}
(R_P)^+_{af} = \{ \beta \in R_{af}^+ \mid \bar \beta \in R_P \}.
\end{equation}

We now define the \textbf{level-zero action} of $\tilde S_n$ on the affine roots
in $R_{af}$ by declaring that $s_j \cdot \delta = \delta$. Let the action of $s_j \in \tilde S_n$ on any simple root $\alpha_i \in \Delta$ and $\alpha_0 =\delta-\theta$ be defined by
\begin{equation}
s_j\cdot \alpha_i = \begin{cases} - \alpha_i & \text{if $i=j$} \\ \alpha_i + \alpha_j & \text{if $(i -j) \equiv \pm 1$ mod $n$} \\ 
\alpha_i & \text{otherwise}.
\end{cases}\label{sact}
\end{equation}
Since this action is linear, Equation \eqref{sact} determines an action $s_j \cdot \beta$ for all affine roots $\beta\in R_{af}$.  This action is then extended to all $w \in \tilde S_n$ in the same manner as for the finite roots by writing down a reduced expression $w = s_{i_1}\cdots s_{i_r}$ and applying the simple generators one at a time.  Note that the actions defined in \eqref{E:S_nAction} and \eqref{sact} agree whenever $i, j \neq 0$, but that the level-zero action by $s_0$ and/or on the affine root $\alpha_0$ defined in \eqref{sact} does not coincide with the action from \eqref{E:S_nAction}.  Since \eqref{sact} defines an action on $R_{af}$, whereas \eqref{E:S_nAction} defines an action on the real vector space $\RR^n$, it should always be clear from context which action is required.

The {\bf inversion set} of a word $w \in \tilde S_n$ is
defined in terms of this action of $w$ on the positive affine roots as follows:
\begin{equation}
\text{Inv}(w) = \{ \alpha \in R_{af}^+ \mid w \cdot \alpha \in R_{af}^-\}.
\end{equation}
It is a standard fact that the length of the word is equal to the number of its inversions:
\begin{equation}
\len(w) = |\inv(w)|.
\end{equation}

\subsubsection{The Peterson isomorphism for partial flags}\label{Sec:PartialPeterson}

In general, the homology of the affine Grassmannian $H_*(\mathcal{G}r)$ admits only a surjection onto the quantum cohomology of a partial flag variety.  In order to map isomorphically onto the smaller ring $QH^*(SL_n(\CC)/P)$, we will need to select certain affine Schubert classes to mod out by.  The following subset of $\tilde S_n$ characterizes the classes which will survive the quotient construction in the parabolic Peterson isomorphism
\begin{equation}
\tilde S_n^P = \{ w \in \tilde S_n \mid  w \cdot \beta \in R_{af}^+ \text{ for all } \beta \in (R_P)^+_{af} \}.
\end{equation}
 \begin{Def}
 Define an ideal in $H_*(\mathcal{G}r)$ by
\begin{equation}
J_P = \sum_{w \in \tilde S_n^0 \backslash \tilde S_n^P} \ZZ \ j^0_w,
\end{equation}
where recall that $\tilde S_n^0$ is the set of minimal length coset representatives in $\tilde S_n / S_n$.  
\end{Def}

The domain of the parabolic Peterson isomorphism will then be a suitably localized version of the quotient $H_*(\mathcal{G}r)/J_P$.  It thus remains only to specify the multiplicatively closed subset around which we localize on the affine side.  The following lemma permits a factorization of every element in $\tilde S_n$ into a product, the first component of which lies in this distinguished subgroup $\tilde S_n^P$, the second of which lies in the following subgroup
\begin{equation}
(\tilde S_n)_{P} = \{ v t_\lambda \in \tilde S_n \mid  \lambda \in Q_P,\ v \in (S_n)_P\}.
\end{equation} 

\begin{lemma}[Lemma 10.6 \cite{LS10}]  \label{Wfac}
For all $w \in \tilde S_n$, there is a unique factorization $w   = w_1 w_2$ where $w_1 \in \tilde S_n^P$ and $w_2 \in (\tilde S_n)_{P}$.
\end{lemma}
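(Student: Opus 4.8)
## Proof Proposal for Lemma \ref{Wfac}

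The plan is to exploit the semidirect product decomposition $\tilde S_n \cong S_n \ltimes Q$ together with the parallel decomposition on the finite side, $S_n = S_n^P \cdot (S_n)_P$ with uniqueness (the standard parabolic coset factorization). Write an arbitrary $w \in \tilde S_n$ as $w = v t_\mu$ with $v \in S_n$ and $\mu \in Q$. The strategy is to first handle the translation lattice: decompose $\mu = \mu_1 + \mu_2$ relative to the direct sum $Q = (\text{a complement}) \oplus Q_P$, but since $Q/Q_P$ need not split as a sublattice of $Q$, I would instead work with the projection $\eta_P : Q \to Q/Q_P$ and be careful. The cleaner route is to characterize $\tilde S_n^P$ and $(\tilde S_n)_P$ directly in terms of inversion sets and the level-zero action, and then build $w_1, w_2$ by a greedy/length-additive procedure.

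Concretely, here are the steps I would carry out. \textbf{Step 1:} Verify that $(\tilde S_n)_P$ as defined is genuinely a subgroup — this is immediate since $(S_n)_P$ is a (parabolic) subgroup of $S_n$, $Q_P$ is an $(S_n)_P$-stable sublattice of $Q$ (because $R_P$ is a root subsystem), and the semidirect product structure restricts. \textbf{Step 2:} Show that $\tilde S_n^P = \{ w \mid w \cdot \beta \in R_{af}^+ \text{ for all } \beta \in (R_P)_{af}^+\}$ is precisely the set of minimal-length representatives for the \emph{right} cosets $\tilde S_n / (\tilde S_n)_P$; this is the affine analogue of the standard fact that $w \in W^P$ iff $w$ sends every positive root of the parabolic to a positive root. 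The key identity is that $(R_P)_{af}^+$ is exactly the inversion set structure controlled by $(\tilde S_n)_P$, i.e. for $u \in (\tilde S_n)_P$ one has $\mathrm{Inv}(u) \subseteq (R_P)_{af}^{\pm}$ in the appropriate sense. \textbf{Step 3:} Existence of the factorization: given $w$, let $w_2$ be the minimal-length element of the coset $w \cdot (\tilde S_n)_P$... wait, one wants $w_1$ minimal in $w(\tilde S_n)_P$; set $w_1 = $ the minimal length representative of the left coset $w (\tilde S_n)_P$, which lies in $\tilde S_n^P$ by Step 2, and then $w_2 := w_1^{-1} w \in (\tilde S_n)_P$. \textbf{Step 4:} Uniqueness: if $w_1 w_2 = w_1' w_2'$ with both $w_i, w_i' \in \tilde S_n^P \times (\tilde S_n)_P$, then $w_1^{-1} w_1' = w_2 (w_2')^{-1} \in (\tilde S_n)_P$, so $w_1$ and $w_1'$ lie in the same left coset; minimality of representatives in $\tilde S_n^P$ then forces $w_1 = w_1'$ and hence $w_2 = w_2'$.

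The main obstacle I anticipate is \textbf{Step 2}: carefully proving that $\tilde S_n^P$, defined via the level-zero action on the affine roots $(R_P)_{af}^+$, coincides with the set of minimal-length coset representatives for $\tilde S_n/(\tilde S_n)_P$. One has to check that $(\tilde S_n)_P$ is the full stabilizer-type subgroup whose inversions stay inside $(R_P)_{af}$, and in particular that including the translations $t_\lambda$ for $\lambda \in Q_P$ (not just $\lambda$ in some cone) does not introduce inversions outside $(R_P)_{af}^+$ — this uses that $t_\lambda \cdot \beta = \beta - \langle \bar\beta, \lambda\rangle \delta$ under the level-zero action, so $\bar\beta \in R_P$ guarantees the translate stays in $(R_P)_{af}$. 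Once that root-combinatorial bookkeeping is pinned down, existence and uniqueness follow from the general theory of minimal coset representatives in Coxeter-like groups (length-additivity $\ell(w_1 w_2) = \ell(w_1) + \ell(w_2)$), exactly as in the finite parabolic case. Since this is Lemma 10.6 of \cite{LS10}, I would cite that source for the technical root-system verification and present only the structural argument here.
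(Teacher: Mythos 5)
The paper does not actually prove this lemma; it is imported verbatim as Lemma 10.6 of \cite{LS10}, so there is no in-paper argument to compare against. Your overall architecture --- recognize $(\tilde S_n)_P$ as a subgroup, characterize $\tilde S_n^P$ as the set of distinguished left-coset representatives for $\tilde S_n/(\tilde S_n)_P$, then read off existence and uniqueness --- is the right one and is essentially how Lam and Shimozono argue. Steps 1, 3, and 4 are fine as written.

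However, the ``key identity'' you lean on in Step 2 is false: it is \emph{not} true that elements of $(\tilde S_n)_P$ have inversions only inside $(R_P)_{af}$. The subgroup $(\tilde S_n)_P = (S_n)_P \ltimes Q_P$ is a reflection subgroup of $\tilde S_n$ (generated by the reflections $s_{\alpha+p\delta}$ with $\alpha \in R_P$), but it is not a standard parabolic, and inversion sets of its elements escape $(R_P)_{af}$. Concretely, take $n=3$, $I_P=\{2\}$, so $Q_P = \ZZ\alpha_2$ and $t_{\alpha_2} \in (\tilde S_3)_P$; under the level-zero action $t_{\alpha_2}\cdot\theta = \theta - \langle\theta,\alpha_2\rangle\delta = \theta - \delta \in R_{af}^-$, so $\theta = \alpha_1+\alpha_2 \in \inv(t_{\alpha_2})$ even though $\theta \notin R_P$. (The same phenomenon already occurs for non-parabolic reflection subgroups of $S_n$.) The correct replacement is the Deodhar--Dyer coset theory for reflection subgroups: each coset $w(\tilde S_n)_P$ contains a \emph{unique} element $w_1$ with $\inv(w_1) \cap (R_P)^+_{af} = \emptyset$, every nonidentity $u \in (\tilde S_n)_P$ inverts some root of $(R_P)^+_{af}$ (which is what makes $\tilde S_n^P \cap (\tilde S_n)_P = \{1\}$ and hence gives uniqueness), and the length additivity $\ell(w_1w_2) = \ell(w_1) + \ell'(w_2)$ holds only with respect to the intrinsic length $\ell'$ of the reflection subgroup, not the restriction of $\ell$. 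Since you explicitly flag Step 2 as the technical crux and plan to cite \cite{LS10} for it, the gap is contained, but the specific justification you sketch there would not survive being written out.
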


\begin{Def}
Define the map
\begin{equation}
\pi_P : \tilde S_n \to \tilde S_n^P \qquad \text{by} \qquad \pi_P(w) = w_1,
\end{equation}
where $w_1$ is from the unique factorization guaranteed by Lemma \ref{Wfac}.
\end{Def}

\begin{thm}[Parabolic Peterson Isomorphism \cite{Pet}, \cite{LS10}] \label{T:ParPeterson}
	Let $v \in S_n^P$, and suppose that $v\pi_P(t_\lambda) \in \tilde{S}^0_n$.  Then there is a $\ZZ$-algebra isomorphism
	\begin{equation}
	\begin{aligned}
	\Psi_P : ( H_*(\mathcal{G}r)/ J_P)[ (\overline{j}^0_{\pi_P(t_\mu)})^{-1} \mid \mu \in  Q^-]& \longrightarrow QH^*(SL_n(\CC)/P)[q_i^{-1} \mid i \in I \backslash I_P ] \\
	\overline{j}^0_{v \pi_P ( t_\lambda)}&\mapsto q^{\eta_P(\lambda)} \sigma_v,
	\end{aligned} 
	\end{equation}
	where $\overline{j}^0_w$ denotes the image of $j^0_w$ in the quotient $H_*(\mathcal{G}r)/ J_P$.
\end{thm}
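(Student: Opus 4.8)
The plan is to derive this statement by specializing the general parabolic Peterson isomorphism of Lam and Shimozono (Section 10 of \cite{LS10}) to $G = SL_n(\CC)$, and then translating each object in their geometric formulation into the type $\tilde A_{n-1}$ combinatorics assembled above. The genuinely deep content — that the correspondence below respects the two ring structures — is exactly what \cite{LS10} establish by verifying the (equivariant) quantum Chevalley--Monk formula on the images of the affine Schubert classes and then appealing to Mihalcea's result \cite{Mihalcea} that this formula determines the product in $QH^*_T(SL_n(\CC)/P)$; I would import this and concentrate on building the type-A dictionary, together with the bookkeeping of the quotient by $J_P$ and the two localizations. The candidate map is the one forced on basis elements, namely $\overline{j}^0_{v\pi_P(t_\lambda)} \mapsto q^{\eta_P(\lambda)}\sigma_v$ for $v \in S_n^P$ and $\lambda \in Q$.

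First I would establish the identification on the level of modules and check that the candidate map is a well-defined bijection on bases. On the affine side, Theorem \ref{jBasiskSchur} identifies the Schubert basis of $H_*(\mathcal{G}r)$ with the $j$-basis $\{j^0_w \mid w \in \tilde S_n^0\}$, so that the $j$-basis descends to a basis of $H_*(\mathcal{G}r)/J_P$ indexed by $\tilde S_n^0 \cap \tilde S_n^P$; on the quantum side $\{\sigma_v \mid v \in S_n^P\}$ is a basis over $\ZZ[q_i : i \in I\backslash I_P]$. The key combinatorial input is Lemma \ref{Wfac}: every $w \in \tilde S_n$ factors uniquely as $w = w_1 w_2$ with $w_1 = \pi_P(w) \in \tilde S_n^P$ and $w_2 \in (\tilde S_n)_P$. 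Using this I would show that, after localizing, the surviving classes are exactly those of the form $\overline{j}^0_{v\pi_P(t_\lambda)}$ with $v \in S_n^P$, and that the residue $\eta_P(\lambda) \in Q/Q_P$ is well defined independently of the choice of $\lambda$ in its coset, so that $q^{\eta_P(\lambda)}$ makes sense and the assignment is a bijection onto the Schubert basis once the $q_i$ with $i \in I\backslash I_P$ are inverted.

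Next I would verify that $J_P$ is an ideal and that the two localizations correspond. Conceptually $J_P$ is an ideal because it is the kernel of the (un-localized) parabolic Peterson surjection $H_*(\mathcal{G}r) \twoheadrightarrow QH^*(SL_n(\CC)/P)$; the combinatorial content is to identify this kernel with $\sum_{w \in \tilde S_n^0\backslash\tilde S_n^P}\ZZ\, j^0_w$, where $\tilde S_n^P$ is cut out by the affine-root condition $w\cdot\beta \in R_{af}^+$ for all $\beta \in (R_P)^+_{af}$. Matching the abstract kernel with this explicit combinatorial $J_P$ is where the affine-root machinery of the previous subsection is indispensable, and I expect it to be the main obstacle. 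For the localizations, I would check that the multiplicatively closed set $\{\overline{j}^0_{\pi_P(t_\mu)} \mid \mu \in Q^-\}$ maps to monomials $q^{\eta_P(\mu)}$ whose inversion generates exactly the localization by $\{q_i \mid i \in I\backslash I_P\}$ on the quantum side; this verification is direct bookkeeping, entirely parallel to the complete-flag case of Theorem \ref{FlagPeterson}.

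With the dictionary in place, the isomorphism follows by transporting the ring-theoretic content of \cite{LS10}: the affine Chevalley--Monk operators on $H_*(\mathcal{G}r)/J_P$ match the quantum Chevalley operators on $QH^*(SL_n(\CC)/P)$ under the candidate map, and since Chevalley--Monk determines the product and the map is a bijection on bases, $\Psi_P$ is the asserted $\ZZ$-algebra isomorphism. The one step I would single out as the crux of the type-A-specific work is confirming the compatibility of the factorization with the degree projection — namely that replacing $t_\lambda$ by its $\tilde S_n^P$-component $\pi_P(t_\lambda)$, and $q^\lambda$ by $q^{\eta_P(\lambda)}$, is precisely the operation that descends the complete-flag isomorphism of Theorem \ref{FlagPeterson} through the quotient by $J_P$. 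Getting the indexing to align here, rather than re-deriving the Chevalley formula, is where the bulk of the care is needed.
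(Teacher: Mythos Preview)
Your proposal is sound, but you should be aware that the paper does not actually prove this theorem at all: it is stated as a known result, attributed to Peterson \cite{Pet} and proved in full generality by Lam and Shimozono \cite{LS10}, and is presented in Section~\ref{Sec:PartialPeterson} purely as background. The paper's contribution begins in Section~\ref{PetIso}, where this cited theorem is \emph{specialized} to the Grassmannian case (Theorem~\ref{GrPeterson}); the work there is to make $J_P$ and $\pi_P$ explicit for $I_P = I\backslash\{m\}$, not to re-derive the isomorphism itself.

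Your outline --- import the ring-theoretic content of \cite{LS10}, build the type-$A$ dictionary, and check that the quotient by $J_P$ and the two localizations line up --- is exactly the right way to unpack the citation, and is more detailed than anything the paper writes down for Theorem~\ref{T:ParPeterson}. The only caveat is your remark that $J_P$ is ``conceptually'' the kernel of an un-localized surjection $H_*(\mathcal{G}r)\twoheadrightarrow QH^*(SL_n(\CC)/P)$: strictly speaking the Peterson map is only defined after localization, so one must be a bit careful about what surjection one means before inverting the translation classes. In \cite{LS10} the ideal $J_P$ is defined directly via the combinatorial condition on $\tilde S_n^P$ and then shown to be the correct ideal to quotient by, rather than being identified as a kernel after the fact. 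This is a minor point of order and does not affect the correctness of your plan.
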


\noindent Note that we use the same notation for the Schubert class in $QH^*(SL_n(\CC)/B)$ and $QH^*(SL_n(\CC)/P)$, and classes indexed by $\sigma_v$ with $v \in S_n^P$ do exist in both rings.  Since from now on we work exclusively in the case of the partial flag, it should always be clear from context which Schubert class is meant.


\section{The Parabolic Peterson Isomorphism for the Grassmannian}\label{PetIso}

In order to effectively use the parabolic Peterson isomorphism to perform calculations in quantum cohomology, we see from Theorem \ref{T:ParPeterson} that one needs to master the definition of $\tilde S_n^P$, which defines the ideal $J_P$, as well as the projection $\pi_P$ onto this subgroup.  All of the details are fully developed in this section in order to present an effective means for using the parabolic Peterson isomorphism to do quantum Schubert calculus in the Grassmannian $Gr(m,n)$.  The special case of the parabolic Peterson for the choice of ${\bf i} = (m)$ appears as Theorem \ref{GrPeterson}, together with several concrete examples.


\subsection{The ideal $J_P$ in the affine Fomin-Stanley algebra}

\begin{figure}[t]
	\begin{centering}
\begin{overpic}[width=0.6\linewidth]{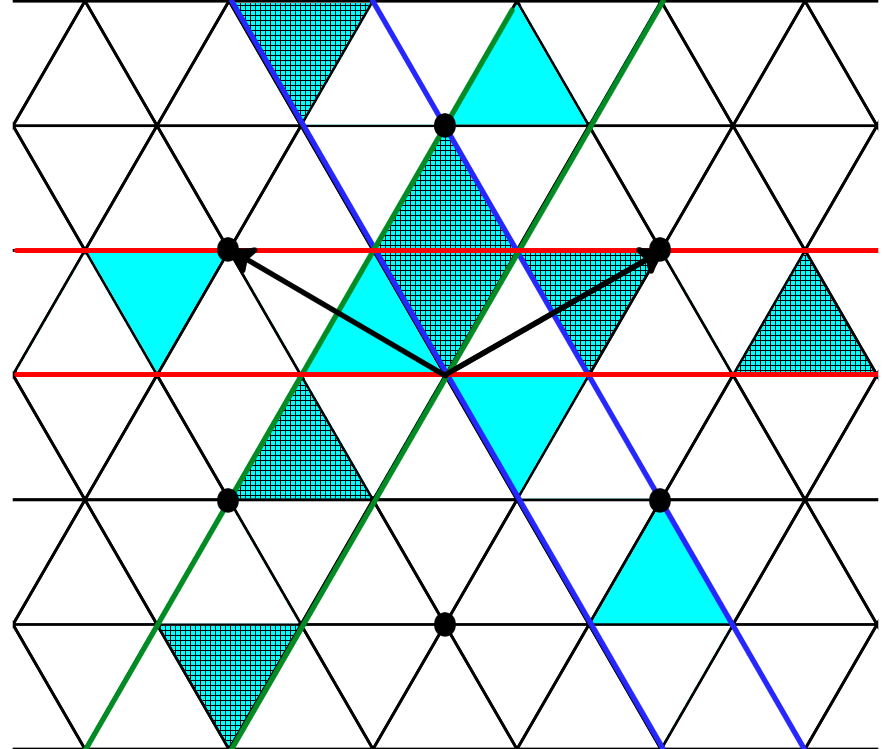}
	\put(49,50){\bf \large $1$}
	\put(40,50){\bf \large $s_1$}
	\put(56,50){\bf \large $s_2$}
	\put(39,37){\bf \large $s_{12}$}
	\put(47,32){\bf \large $s_{121}$}
	\put(46,24){\bf \large $s_{1210}$}
	\put(56,37){\bf \large $s_{21}$}
	\put(29,58){\bf \large $ \alpha_2$}
	\put(78,58){\bf \large $ \alpha_1$}
	\put(48,60){\bf \large $ s_0$}
	\put(-12,55){\bf \LARGE $s_0  \circlearrowright$}
	\put(67,-6){\bf \LARGE $ s_1  \circlearrowright$}
	\put(16,-6){\bf \LARGE $ s_2  \circlearrowright$}
	\put(34,53){\bf \large $ s_{10}$}
	\put(61,53){\bf \large $ s_{20}$}
	\put(31,32){\bf \large $ s_{120}$}
\end{overpic}
\vskip 30pt
\caption{The elements of $\tilde S_3^P$ for the parabolic $P$ corresponding to $Gr(1,3)$ are colored blue.  The shaded blue alcoves are the elements of $\tilde S_3^P \cap \tilde S_3^0$, which are also minimal length coset representatives.  }\label{Fig:J_PEx} 
\end{centering}
\end{figure}

Recall that the ideal with which we need to form a quotient of $H_*(\mathcal{G}r)$ is defined as \[J_P = \sum_{w \in \tilde S_n^0 \backslash \tilde S_n^P} \ZZ \ j^0_w\]
We thus first aim to understand the set $\tilde S_n^0 \backslash \tilde S_n^P$ to find which classes $j_w^0$ get killed in the quotient $H_*(\mathcal{G}r)/J_P$. To help the reader visualize this set, we provide an illustrated example showing $\tilde S_n^P$ in the case of $n=3$ in Figure \ref{Fig:J_PEx}.

In order to characterize the ideal $J_P$, we need to determine which elements $w \in \tilde S_n^0$ also appear in $\tilde S_n^P$.  Recalling the definition
\[  \tilde S_n^P = \{ w \in \tilde S_n \mid  w \cdot \beta \in R_{af}^+ \text{ for all } \beta \in (R_P)^+_{af} \},\]
we see that a priori, we need to check a positivity condition on the infinitely many affine roots in $(R_P)^+_{af}$.  The following lemma provides a significant reduction step to determining whether or not an element lies in $\tilde S_n^P$, enabling us to verify the positivity condition only on finitely many roots.  In particular, the lemma tells us that it is sufficient to consider only those finite roots $\alpha \in R_P^+$ when deciding whether or not an element $w \in \tilde S_n$ is in $\tilde S_n^P$.

\begin{lemma} A word $w \in\tilde S_n^P$ if and only if for all $\alpha \in R_P^+$, we have $w \cdot \alpha \in R^+ \cup (R^- + \delta)$.\label{small}
\end{lemma}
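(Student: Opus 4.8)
The plan is to prove both directions by unwinding how the level-zero action of $w \in \tilde S_n$ behaves on a general affine root $\beta = \alpha + p\delta$ with $\bar\beta = \alpha \in R_P$, reducing the positivity condition on the infinitely many roots of $(R_P)^+_{af}$ to a condition on the finitely many roots of $R_P^+$. The key point is that since $w$ fixes $\delta$ under the level-zero action, we have $w \cdot (\alpha + p\delta) = (w \cdot \alpha) + p\delta$, so everything is controlled by the vector part $w \cdot \alpha \in R$ together with a bounded shift.

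First I would treat the forward direction. Suppose $w \in \tilde S_n^P$, so $w \cdot \beta \in R_{af}^+$ for every $\beta \in (R_P)^+_{af}$. Fix $\alpha \in R_P^+$. Then $\alpha$ itself lies in $(R_P)^+_{af}$ (taking $p=0$, since $R^+ \subset R_{af}^+$), so $w \cdot \alpha \in R_{af}^+$; but $w \cdot \alpha$ has zero $\delta$-coefficient, so being a positive affine root with $p = 0$ forces $w \cdot \alpha \in R^+$, giving the ``$R^+$'' part of the disjunction. If instead $w \cdot \alpha \in R^-$, I would consider $\beta = \alpha + \delta$, which is in $(R_P)^+_{af}$ since $p = 1 > 0$ and $\bar\beta = \alpha \in R_P$; then $w \cdot \beta = (w \cdot \alpha) + \delta$, and for this to be a positive affine root with a negative vector part we need $p$-coefficient $\geq 1$, which is exactly satisfied, so $w \cdot \alpha \in R^- + \delta$. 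Thus in all cases $w \cdot \alpha \in R^+ \cup (R^- + \delta)$, as desired. (One should also double-check the degenerate possibility $w \cdot \alpha = 0$ cannot occur: $w$ acts by a bijection on $R_{af}$ and $\alpha \neq 0$.)

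For the converse, suppose $w \cdot \alpha \in R^+ \cup (R^- + \delta)$ for all $\alpha \in R_P^+$; I must show $w \cdot \beta \in R_{af}^+$ for every $\beta \in (R_P)^+_{af}$. Write $\beta = \alpha + p\delta$ with $\bar\beta = \alpha \in R_P$ and $\beta > 0$, so either ($\alpha \in R^+$ and $p \geq 0$) or ($\alpha \in R^-$ and $p \geq 1$); note $\alpha \neq 0$ as $\beta \neq 0$. In the first case $\alpha \in R_P^+$, so by hypothesis $w \cdot \alpha \in R^+$ (giving $w \cdot \beta = (w \cdot \alpha) + p\delta$ positive when $p \geq 0$, handling $p=0$ by $w \cdot \alpha \in R^+$ directly and $p \geq 1$ automatically) or $w \cdot \alpha \in R^- + \delta$, in which case $w \cdot \beta = (w \cdot \alpha - \delta) + (p+1)\delta$ with $p + 1 \geq 1$, hence positive. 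In the second case $\alpha \in R_P^-$, so $-\alpha \in R_P^+$ and the hypothesis applies to $-\alpha$: either $w \cdot (-\alpha) \in R^+$, i.e.\ $w \cdot \alpha \in R^-$, and then $w \cdot \beta = (w \cdot \alpha) + p\delta$ with $p \geq 1$ is positive; or $w \cdot (-\alpha) \in R^- + \delta$, i.e.\ $-(w \cdot \alpha) - \delta \in R^-$, so $w \cdot \alpha + \delta \in R^+$, hence $w \cdot \alpha \in R^+ - \delta$ and $w \cdot \beta = (w\cdot\alpha + \delta) + (p-1)\delta$ with $p - 1 \geq 0$ is positive. In every case $w \cdot \beta \in R_{af}^+$.

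The main obstacle, and the step requiring the most care, is the bookkeeping in the converse direction when $\alpha \in R_P^-$: one must correctly translate the hypothesis stated for $R_P^+$ into information about $w \cdot \alpha$ for negative $\alpha$, using that the level-zero action is linear (so $w \cdot (-\alpha) = -(w \cdot \alpha)$) and that $R^-$ is closed under negation only in the sense $R^- = -R^+$. Keeping straight which bound on the $\delta$-coefficient $p$ is needed — $p \geq 0$ versus $p \geq 1$ — in each of the four cases is the delicate part; everything else follows formally from $w \cdot \delta = \delta$ and the definition of $R_{af}^+$.
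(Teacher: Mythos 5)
Your ($\Longleftarrow$) direction is correct, and is essentially a direct-implication version of the paper's contrapositive argument: writing $\beta = \alpha + p\delta$, splitting on the sign of $\alpha$, and using linearity together with $w\cdot\delta=\delta$, your four cases check out, including the $p\geq 0$ versus $p\geq 1$ bookkeeping you flag as delicate.

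The ($\Longrightarrow$) direction has a genuine gap. You assert that for $\alpha\in R_P^+$ the image $w\cdot\alpha$ ``has zero $\delta$-coefficient,'' so that $w\cdot\alpha\in R_{af}^+$ forces $w\cdot\alpha\in R^+$. That is false: the level-zero action fixes $\delta$ but does not preserve the $\delta$-coefficient of the root being acted on (already $s_0\cdot\alpha_1=\alpha_1+\alpha_0=\alpha_1-\theta+\delta$, and for general $w$ the coefficient can be any integer). Consequently $w\cdot\alpha$ may have the form $\gamma+q\delta$ with $\gamma\in R^+$ and $q\geq 1$, or with $\gamma\in R^-$ and $q\geq 2$; these are positive affine roots, so the hypothesis $w\cdot\alpha\in R_{af}^+$ alone does not exclude them, yet they lie outside $R^+\cup(R^-+\delta)$. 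Your subsequent case ``if instead $w\cdot\alpha\in R^-$'' contradicts the sentence before it and in any event still treats only finite-root images. The missing idea --- which is the heart of the paper's proof --- is to apply the defining condition of $\tilde S_n^P$ to a \emph{second} root, namely $-\alpha+\delta\in (R_P)^+_{af}$: if $w\cdot\alpha=\gamma+q\delta$ with $\gamma\in R^+$ and $q\geq 1$, then $w\cdot(-\alpha+\delta)=-\gamma+(1-q)\delta\in R_{af}^-$, contradicting $w\in\tilde S_n^P$, and similarly for $\gamma\in R^-$, $q\geq 2$. Pairing each $\alpha\in R_P^+$ with $-\alpha+\delta$ is exactly what pins the $\delta$-coefficient down to $0$ or $1$ and yields the stated disjunction; without it the forward implication does not go through.
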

\begin{proof} We prove both statements by contrapositive.

($\Longleftarrow$)  Since $w \not \in \tilde S_n^P$, there exists $\beta' \in (R_P)^+_{af}$ such that
$w \cdot \beta' = \eta \in R^-_{af}$. By definition, $\beta' = \beta + n \delta$ with $n \ge 0$ and $\beta \in R_P^+ \cup (R_P^- + \delta)$. Then $w\cdot \beta = w\cdot (\beta'-n\delta) =\eta - n \delta \in R^-_{af}$, so there exists $\beta\in R_P^+ \cup(R_P^- + \delta)$ such that $w \cdot \beta\in R^-_{af}$. 
If $\beta \in R_P^+$, then we have found $w \cdot \beta \not \in R^+ \cup (R^- + \delta)$ as $R^-_{af}$ contains no positive roots. Otherwise, $\beta = -\alpha + \delta$ for some $\alpha \in R_P^+$. By definition of $R^-_{af}$, either
$w \cdot \beta  =  - \gamma - m \delta$ or $w\cdot \beta = \gamma - (m+1)\delta$ for some $\gamma \in R^+$ and $m\ge 0$, which implies that either $w \cdot \alpha = w \cdot(-\beta + \delta)
=\gamma + (m+1)\delta \not \in R^+ \cup (R^- + \delta)$ or $w \cdot \alpha = w \cdot(-\beta + \delta)
=-\gamma + (m+2)\delta\not \in R^+ \cup (R^- + \delta)$. In any case, we have found $\beta \in R_P^+$
or $\alpha = -\beta + \delta \in R_P^+$ such that either  $w\cdot \beta$ or $w \cdot \alpha$ is not in $R^+ \cup (R^- + \delta)$.

($\Longrightarrow$) Let $\alpha \in R_P^+$ with $w \cdot \alpha \not \in R^+ \cup (R^- + \delta)$. If $w\cdot \alpha = \beta - (n+1)\delta$ or $w \cdot \alpha = - \beta - n \delta$ for $\beta \in R^+$ and $n\ge 0$, then $w \not \in \tilde S_n^P$. Otherwise, either
$w \cdot \alpha = \beta + (n+1)\delta$ or $w \cdot \alpha = - \beta + (n+2) \delta$ for $\beta \in R^+$ and $n\ge 0$, whence
$w \cdot (-\alpha + \delta) = -\beta - n \delta$ or $ w \cdot (-\alpha + \delta) = \beta - (n+1) \delta$, so
$w \not \in \tilde S_n^P$.
\end{proof}

Using Lemma \ref{small}, we can now identify a large family of words which lie in $\tilde S_n^P$; namely, those which do not use $s_m$ in any reduced expression.

\begin{lemma} Let $I_P = I\backslash\{\kk\}$, and let $w=s_{i_1}\cdots s_{i_p} s_0 \in \tilde S_n^0$ be a reduced  expression with $i_\LL\ne \kk$ for all $\LL$. Then, $w \in \tilde S_n^P$.
\label{nosk}
\end{lemma}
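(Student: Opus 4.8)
The plan is to apply Lemma \ref{small}, which reduces membership in $\tilde S_n^P$ to checking that $w \cdot \alpha \in R^+ \cup (R^- + \delta)$ for every finite positive root $\alpha \in R_P^+$. So I first need a concrete description of $R_P^+$ in our situation. Since $I_P = I \setminus \{\kk\}$, the flag vector is ${\bf i} = (\kk)$, and by \eqref{E:R_P} the set $R_P$ consists of those $\vec e_i - \vec e_j$ with $i, j$ both $\le \kk$ or both $> \kk$; hence $R_P^+ = \{\vec e_i - \vec e_j \mid i < j \le \kk\} \cup \{\vec e_i - \vec e_j \mid \kk < i < j\}$. Equivalently, $\alpha \in R_P^+$ iff $\alpha$ is a positive root whose support (as a sum of simple roots $\alpha_\LL$) avoids the index $\kk$, i.e. $\alpha = \alpha_a + \alpha_{a+1} + \cdots + \alpha_b$ with either $b < \kk$ or $a > \kk$.

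The key step is then to track how the level-zero action of $w = s_{i_1}\cdots s_{i_p} s_0$ moves such an $\alpha$. Since $w \in \tilde S_n^0$, every reduced word for $w$ ends in $s_0$, but the hypothesis is stronger: there is a reduced expression using none of $s_\kk$. I would use the formula \eqref{sact}: applying a simple generator $s_j$ to an affine root changes it only when $j$ is in the support of the root or adjacent to it (cyclically). The point is that $s_0$ acts on $\alpha_0 = \delta - \theta$ and interacts with $\alpha_1$ and $\alpha_{n-1}$, so applying $s_0$ can move a root out of $R$ and into $R + \ZZ\delta$; the question is whether the $\delta$-coefficient can go negative. I would argue by tracking the $\delta$-degree: start with $\alpha \in R_P^+ \subset R$, which has $\delta$-degree $0$, and show that since $w$ avoids $s_\kk$, the action of $w$ keeps the image either a positive finite root (degree $0$) or of the form $-\gamma + \delta$ with $\gamma \in R^+$ (degree $1$) — precisely the set $R^+ \cup (R^- + \delta)$ demanded by Lemma \ref{small}. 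Concretely, since $w \in \tilde S_n^0$ the inversion set $\inv(w)$ consists entirely of affine roots $\beta$ with $w \cdot \beta \in R^-_{af}$, and one knows (standard fact for minimal coset representatives of $\tilde S_n / S_n$) that $\inv(w) \subseteq \{\gamma + \ZZ_{\ge 0}\delta : \gamma \in R\} \cap R^+_{af}$ with all images landing in $R^- $ or $R^- - \ZZ_{\ge 0}\delta$; combined with the observation that $\alpha \in R_P^+$ cannot lie in $\inv(w)$ when $w$ avoids $s_\kk$ (because the parabolic subgroup $(S_n)_P$ generated by $\{s_\LL : \LL \ne \kk\}$ fixes $R_P$ setwise and $w$ restricted appropriately behaves like an element keeping $R_P^+$ positive), one gets that $w \cdot \alpha$ stays in $R^+$ unless the $s_0$ at the end forces a single shift by $\delta$.

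The cleanest route is probably to induct on $p$ (the number of generators before the final $s_0$), or better, to use the semidirect product decomposition $\tilde S_n \cong S_n \ltimes Q$: write $w = v\, t_\mu$ with $v \in S_n$ and $\mu \in Q$, observe that avoiding $s_\kk$ in a reduced word forces $v \in (S_n)_P$ and $\mu \in Q_P$ (or at least that the relevant data lies in the parabolic piece), and then use the explicit formula for the level-zero action of a translation: $t_\mu \cdot \alpha = \alpha - \langle \alpha, \mu\rangle \delta$ for $\alpha \in R$. Then $w \cdot \alpha = v \cdot \alpha - \langle \alpha, \mu \rangle \delta$, and since $v \in (S_n)_P$ preserves $R_P$ and in particular sends $R_P^+ \setminus \inv(v)$ into $R_P^+$, a careful sign count on $\langle \alpha, \mu\rangle$ using $\mu \in Q_P^-$-type conditions (inherited from $w \in \tilde S_n^0$) gives exactly $w \cdot \alpha \in R^+ \cup (R^- + \delta)$. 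I expect the main obstacle to be the bookkeeping in this last sign analysis: one must verify that for $\alpha \in R_P^+$ the integer $\langle \alpha, \mu \rangle$ is either $0$ (with $v\cdot\alpha$ still positive) or $+1$ (with $v\cdot\alpha$ negative), never $\ge 2$ and never negative, and this is where the precise interaction between "ends in $s_0$" (the $\tilde S_n^0$ condition) and "omits $s_\kk$" has to be pinned down — likely via the $n$-core / grid description of $\tilde S_n^0$ from Section \ref{Sec:AffSchClasses}, or via a direct induction peeling off generators $s_{i_1}, \dots, s_{i_p}$ one at a time and checking each preserves the invariant "$w' \cdot \alpha \in R^+ \cup (R^- + \delta)$ for all $\alpha \in R_P^+$."
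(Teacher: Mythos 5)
Your reduction to Lemma \ref{small} and your description of $R_P^+$ match the paper's setup, and you correctly identify exactly what must be shown: that $w\cdot\alpha$ lands in $R^+\cup(R^-+\delta)$ for every $\alpha\in R_P^+$. But that verification is the entire content of the lemma, and the proposal never carries it out; worse, the one concrete route you commit to rests on a false claim. Writing $w=vt_\mu$ with $v\in S_n$ and $\mu\in Q$, it is \emph{not} true that having a reduced word avoiding $s_\kk$ forces $v\in (S_n)_P$ and $\mu\in Q_P$: already $w=s_0=s_\theta t_{-\theta}$ (which avoids every $s_i$ with $i\neq 0$) has $v=\tau_{1n}\notin (S_n)_P$ and $\mu=-\theta\notin Q_P$; likewise $s_{20}=s_{12}\,t_{-\theta}$ in $\tilde S_3$ with $\kk=1$ satisfies all the hypotheses of the lemma and has neither factor in the parabolic piece. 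Indeed, if your claim held then $w$ would lie in $(\tilde S_n)_{P}$ and hence have trivial image under $\pi_P$, which is essentially never the case for the elements of interest. So the sign analysis of $\langle\alpha,\mu\rangle$ that you defer to ``bookkeeping'' has no correct starting point.

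The fallback you mention --- peeling off generators one at a time and checking that each preserves the invariant ``$w'\cdot\alpha\in R^+\cup(R^-+\delta)$ for all $\alpha\in R_P^+$'' --- also does not close the gap as stated, because that invariant is not self-propagating under an arbitrary $s_i$ with $i\neq\kk$: if $w'\cdot\alpha=\alpha_i$ then $s_iw'\cdot\alpha=-\alpha_i$ leaves the set, and if $w'\cdot\alpha=\theta$ then $s_0w'\cdot\alpha=2\delta-\theta$ is a positive affine root lying outside $R^+\cup(R^-+\delta)$. One therefore needs finer control on \emph{which} roots can occur as $w'\cdot\alpha$, and this is precisely what the paper supplies: it converts the hypothesis ``no $s_\kk$'' into the statement that the $n$-core of $w$ fits inside an $(n-\kk)\times\kk$ rectangle (so the $n$-core equals the $k$-bounded partition), chooses the canonical reduced word obtained by inserting the partition row by row (or column by column, for the second block of $R_P^+$), and then computes the action on $\vec e_{i}-\vec e_{j}$ explicitly, showing the image always has the form $\vec e_x-\vec e_y$ with $\kk-n<x$ and $y\le\kk$, which forces it into $R^+\cup(R^-+\delta)$. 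That normal form and the accompanying index bookkeeping are the missing ideas in your proposal.
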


\begin{proof}
Because $i_\LL \ne \kk$, the element $w$ corresponds to an $n$-core which fits in an $(n-\kk) \times \kk$ rectangle. The hook length of the upper-leftmost box is thus at most $n-1$, and so we know that the $n$-core coincides with the $k$-bounded partition. Suppose that the $k$-bounded partition has parts $\lambda = (\lambda_1,...,\lambda_{n-\kk})$.  Then for all $1 \leq i \leq n-m$, define a word $w_i$ as follows
\begin{equation}
w_i = s_{\lambda_i -i}s_{\lambda_i-2-i}\cdots s_{-i+2}s_{-i+1},
\end{equation}
with indices taken modulo $n$.  Define their product  
\begin{equation}
w = w_{\lambda_{n-\kk-1}}w_{\lambda_{n-\kk-1}}\cdots w_1,
\end{equation}
which is the word constructed from the $n$-core $\lambda$ using the algorithm discussed in Section \ref{Sec:AffSchClasses}. Here, since the shape fits inside the $(n-m)\times m$ rectangle, we have proceeded by removing only one box at a time, going row by row, removing rows from bottom to top.

Consider a root $\alpha = \vec e_{i} - \vec e_{j} \in R_P^+$ with $0 < i < j$ and $j \le k$. Then,
\begin{equation}
w_1 \cdot \alpha = \begin{cases} \vec e_{i} - \vec e_j & \text{if $\lambda_1< i$} \\
							\vec e_{i-1} - \vec e_{j} & \text{if $j\ge \lambda_1 \ge i$} \\
							\vec e_{i-1} - \vec e_{ \lambda_1-1} & \text{if $\lambda_1 > j$}
\end{cases}
\end{equation}
For each additional $w_i$ applied to $w_1 \cdot \alpha$, we know that, because $\lambda_a \ge \lambda_{a+1}$ for all $a$, the resulting root will always be of the form $\vec e_{x} - \vec e_y$ for some $i \geq x >m-n$ and $y = \max(\{j,\lambda_1-1\})$. Indeed, the maximum possible value for $\lambda_1$ is $\kk$. Furthermore, applying $w_i$ to $\vec e_x - \vec e_y$ will give either $\vec e_{x-1} - \vec e_y$ or $\vec e_x - \vec e_y$, so the minimum possible value for $x$ in $w \cdot \alpha = \vec e_x - \vec e_y$ is $i - n+\kk > \kk-n$. In any case, $w \cdot \alpha = \beta$ or $w\cdot \alpha = -\beta + \delta$ for some $\beta \in R^+$.  Indeed, if
$x > 0$, we have $w \cdot \alpha = \alpha_x + \cdots + \alpha_{y-1}$, and if
$x < 0$ then $\kk< x + n \le n-1,$ and we have $w \cdot \alpha = -\alpha_{y} - \alpha_{y+1} - \cdots - \alpha_{x+n-1} + \delta$ since $x + n - 1 > \kk -1$ and $y \le \kk-1$. Therefore, $w \cdot \alpha \in R^+ \cup (R^- + \delta)$.

Similarly, if  $\alpha  = \vec e_{i} - \vec e_{j} \in R_P^+$ with $ i < j\le n$ and $i \ge \kk+1$, we instead consider the transpose $\lambda^T$ having parts $\lambda_i^T$, and we write $w = w'_{\kk}w'_{\kk-1}\cdots w_1'$ where
\begin{equation}
w_i' = s_{i- \lambda^T_i}s_{i-\lambda^T_i+1}\cdots  s_{i-2}s_{i-1}.
\end{equation}
That is, we insert boxes column by column. The same argument then shows that $w \cdot \alpha \in R^+ \cup (R^- + \delta)$.  

Having checked all of the elements of $R_P^+$, by Lemma~\ref{small} we know that $w \in \tilde S_n^P$.
\end{proof}

In fact, Lemma \ref{nosk} is strong enough in the case of a maximal parabolic to yield a complete characterization of the words in $\tilde S_n^P$ whose corresponding $k$-bounded partitions are row or column shapes.  We formalize this observation afterwards in Remark \ref{hRmk}.

\begin{lemma}
Let $I_P = I\backslash\{\kk\}$. Then $w = s_{r-1,r-2,...,1,0} \in \tilde S_n^P$ if and only if $r \le \kk\le n-1$. Similarly, $w = s_{n-r+1,n-r+2,...,n-1,0} \in \tilde S_n^P$ if and only if $ r \le n-\kk \le n-1$. \label{hJP}
\end{lemma}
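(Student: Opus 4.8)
The plan is to leverage Lemma~\ref{nosk} together with the bijection between minimal length coset representatives and $n$-cores to reduce everything to a concrete root computation. First I would observe that $w = s_{r-1,r-2,\dots,1,0}$ is precisely the word built from the hook-shaped $n$-core described by successively boxing the labels $0,1,\dots,r-1$ in the grid~\eqref{grid}; since $r \le n-1$ this is a genuine reduced expression in $\tilde S_n^0$, and the corresponding $k$-bounded partition is the single row $\lambda(w) = (r-1)$ when $r-1 \le k$, i.e.\ for all $r \le n$. Analogously $w = s_{n-r+1,n-r+2,\dots,n-1,0}$ is the word for the single-column shape $\lambda(w) = (1^{r-1})$; one can check this directly, or note that it is the $k$-conjugate of the previous word under the map $i_j \mapsto n - i_j$ recorded in Section~\ref{Sec:AffSchClasses}.

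The ($\Leftarrow$) direction is then immediate from Lemma~\ref{nosk}: if $r \le m$, then every index appearing in $s_{r-1,\dots,1,0}$ lies in $\{0,1,\dots,r-1\} \subseteq \{0,1,\dots,m-1\}$, so in particular no $s_m$ occurs, and Lemma~\ref{nosk} gives $w \in \tilde S_n^P$. For the column case, if $r \le n-m$ then the indices $\{n-r+1,\dots,n-1,0\}$ avoid $m$ precisely because $n-r+1 > m$, so again Lemma~\ref{nosk} applies. (One should double-check the boundary: the hypothesis $r \le m \le n-1$ and $r \le n-m \le n-1$ are exactly what is needed for $w$ to be a well-defined element of $\tilde S_n^0$ and for $m$ to be avoided.)

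For the ($\Rightarrow$) direction I would argue by contrapositive: suppose $r > m$ (so $m \le r-1$, meaning $s_m$ does occur in the reduced word $s_{r-1,\dots,1,0}$), and exhibit a single finite root $\alpha \in R_P^+$ violating the criterion of Lemma~\ref{small}, i.e.\ with $w\cdot\alpha \notin R^+ \cup (R^- + \delta)$. The natural candidate is $\alpha = \alpha_m = \vec e_m - \vec e_{m+1} \in R_P^+$ (note $\alpha_m \in R_P^+$ since $R_P$ omits only $\alpha_m$ from $\Delta$ — wait, one must be careful: $\alpha_m \notin R_P$). So instead the right choice is a root of the form $\vec e_i - \vec e_j$ with $i,j$ on the same side of the cut at $m$; I would track $w \cdot \alpha$ explicitly using the level-zero action~\eqref{sact} applied letter-by-letter through $s_{r-1},\dots,s_1,s_0$, and show that because the word "wraps around" past position $m$ (which happens exactly when $r-1 \ge m$), some $\alpha \in R_P^+$ gets sent to an affine root with the wrong $\delta$-coefficient, namely into $R^- + p\delta$ with $p \ge 2$ or into $R^+ + p\delta$ with $p \ge 1$. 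The transpose/$k$-conjugate symmetry then handles the column statement. The main obstacle I anticipate is bookkeeping in this ($\Rightarrow$) computation: pinning down exactly which root $\alpha \in R_P^+$ to use and carefully iterating the affine action through all $r$ generators without sign or index errors — this is the one genuinely computational step, and getting the modular arithmetic on the indices (everything mod $n$) correct is where mistakes would creep in.
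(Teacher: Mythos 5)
Your backward direction is exactly the paper's: both cases follow from Lemma~\ref{nosk} because the reduced word avoids $s_\kk$, and your index bookkeeping ($r-1<\kk$, respectively $n-r+1>\kk$) is correct. The forward direction, however, is where the content of the lemma lives, and your proposal stops short of proving it. You correctly rule out $\alpha_\kk$ as a witness (it is not in $R_P$) and correctly observe that the witness must be a positive root $\vec e_i - \vec e_j$ with $i,j$ on the same side of the cut at $\kk$, but you never name the root and never carry out the action computation --- you only describe the calculation you would attempt and flag it as the step where errors could creep in. Since exhibiting that root and verifying the failure of the positivity criterion \emph{is} the entire proof of the forward direction, this is a genuine gap rather than a complete argument.

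For comparison, the paper's witness for $n>r>\kk$ is $\beta = \alpha_r + \cdots + \alpha_{n-1} = \vec e_r - \vec e_n$, which lies in $R_P^+$ precisely because $r>\kk$ places both indices in the second block. Applying $s_{r-2}\cdots s_1 s_0$ to $\beta$ produces $\alpha_r + \cdots + \alpha_{n-1} + \alpha_0 + \alpha_1 + \cdots + \alpha_{r-2} = \delta - \alpha_{r-1}$, and $s_{r-1}$ then sends this to $\alpha_{r-1}+\delta$, which is not in $R^+ \cup (R^- + \delta)$, so Lemma~\ref{small} applies; equivalently, $w\cdot(-\beta+\delta) = -\alpha_{r-1} \in R^-_{af}$ with $-\beta+\delta \in (R_P)^+_{af}$, so $w\notin\tilde S_n^P$ straight from the definition. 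The column case uses $\beta = \alpha_1+\cdots+\alpha_{n-r} $ symmetrically. Your predicted failure mode (landing in $R^+ + p\delta$ with $p\geq 1$) is in fact the one that occurs, so the intuition is sound; only the execution is missing. One minor correction: the $k$-bounded partition attached to $s_{r-1,\dots,1,0}$ is the row $(r)$, not $(r-1)$ --- the word has $r$ letters and $j_w^0 = {\bf \tilde h}_r = {\bf s}^{(k)}_{(r)}$ --- though this does not affect your argument, since Lemma~\ref{nosk} only needs the letters to avoid $s_\kk$.
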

\begin{proof} ($\Longleftarrow$) In both cases this direction follows from Lemma~\ref{nosk} since $w$ does not use $s_\kk$.

($\Longrightarrow$) Let $n >r > \kk$. Consider the root $ \beta = \alpha_r + ... + \alpha_{n-1} = \vec e_{r} - \vec e_{n} \in (R_P)^+_{af}$. Then for $w = s_{r-1,r-2,...,1,0}$, we can calculate that
\begin{equation}
 w\cdot \beta = s_{r-1}\cdot( \alpha_r + \cdots + \alpha_{n-1} + \alpha_0 + \alpha_1 + \cdots + \alpha_{r-2}) = \alpha_{r-1}+ \delta.
\end{equation}
Therefore, $w \cdot (-\beta + \delta) \in R^-_{af}$, and so $w \not \in \tilde S_n^P$ by definition.

For $n > r > n-\kk$, we consider the root $\beta = \alpha_{1} + ... + \alpha_{n-r} =\vec e_{1} - \vec e_{n-r} \in (R_P)^+_{af}$. Then for $w = s_{n-r+1,n-r+2,...,n-1,0}$, we can calculate that
\begin{equation}
 w\cdot \beta = s_{n-r+1}\cdot(\alpha_1 + \cdots + \alpha_{n-r} + \alpha_{n-r+2} + \cdots + \alpha_{n-1}+\alpha_0) = \alpha_{n-r+1}+ \delta.
\end{equation}
Therefore, $w \cdot (-\beta + \delta) \in R^-_{af}$, and so $w \not \in \tilde S_n^P$ by definition.
\end{proof}

\begin{rmk}\label{hRmk}
Note that, because $r\le n-1$, then $w = s_{r-1,r-2,...,1,0}$ is a cyclically decreasing element in $\tilde S_n^0$, and so $j_w^0 = {\bf \tilde h}_r$.  Lemma \ref{hJP} then says that ${\bf \tilde h}_r \in J_P$ if and only if $r >\kk$. For the second family of elements $w' = s_{n-r+1,n-r+2,...,n-1,0}$ considered in Lemma \ref{hJP}, we then have ${\bf \tilde e}_r = j_{w'}^0$ and ${\bf \tilde e}_r \in J_P$ if and only if $r > n-\kk$.
\end{rmk}

The previous lemmas have certainly identified some of the elements of $\tilde S_n^P$ and thus some of the elements in $J_P$. The following lemma identifies a different family of elements in $J_P$; namely, those which support braid relations.

\begin{prop}
Let $I_P = I\backslash \{\kk\}$. If $w \in \tilde S_n^0$ and $w=w_1 s_is_{i+1}s_iw_2$ is a reduced expression for $w$ so that $\len(w) = \len(w_1) + \len(s_i s_{i+1}s_i) + \len(w_2)$, then $w \not \in \tilde S_n^P$. \label{braid}
\end{prop}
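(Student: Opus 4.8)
The plan is to prove the contrapositive is not needed here; rather, I want to exhibit, for any $w \in \tilde S_n^0$ supporting a braid relation $w = w_1 s_i s_{i+1} s_i w_2$, a single affine root $\beta \in (R_P)^+_{af}$ witnessing $w \cdot \beta \in R_{af}^-$. The key reduction is Lemma~\ref{small}: it suffices to produce a \emph{finite} root $\alpha \in R_P^+$ with $w \cdot \alpha \notin R^+ \cup (R^- + \delta)$. Since $I_P = I \setminus \{\kk\}$, the set $R_P^+$ consists of all $\vec e_p - \vec e_q$ with $p < q$ lying entirely in $\{1,\dots,\kk\}$ or entirely in $\{\kk+1,\dots,n\}$, so I have a large supply of candidate roots; the task is to locate the right one given the braid structure.

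First I would recall the standard fact that a reduced word $s_{i_1}\cdots s_{i_\ell}$ supports a braid relation $s_i s_{i+1} s_i$ (consecutively, as in the hypothesis) if and only if the inversion set $\inv(w)$ contains a ``nested triple'' of affine roots of the form $\beta_1, \beta_1 + \beta_2, \beta_2$ where $\beta_1, \beta_2$ are the positive affine roots reflected by the two copies of $s_i$ and the intervening $s_{i+1}$ along the walk; equivalently, writing $u = w_1 s_i s_{i+1} s_i$, the three roots $u^{-1}\cdot(-\alpha_i)$-type inversions created by that rank-two subword are $w_1\cdot \alpha_i$, $w_1\cdot(\alpha_i + \alpha_{i+1})$, $w_1 \cdot \alpha_{i+1}$, all of which lie in $\inv(w)$. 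The middle one, $\gamma := w_1 \cdot (\alpha_i + \alpha_{i+1})$, is the crucial inversion: because $w$ sends it to a negative affine root, and because $\overline{\alpha_i + \alpha_{i+1}} = \vec e_i - \vec e_{i+2}$ is a "length-two" finite root, $\gamma$ projects to a root $\bar\gamma \in R$, and I will argue $\bar\gamma$ (or a related finite root obtained from it) can be taken to lie in $R_P^+$, or else its negative-plus-$\delta$ partner does, which is exactly the failure condition in Lemma~\ref{small}.

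The main obstacle, and where the real work lies, is controlling \emph{which} parabolic the offending root lands in — i.e., showing $\bar\gamma$ (up to the $-\alpha+\delta$ symmetry) avoids straddling the single deleted node $\kk$. Here is where the hypothesis $w \in \tilde S_n^0$ (minimal length coset representative in $\tilde S_n/S_n$, reduced word ending in $s_0$) should be exploited: such $w$ corresponds to an $n$-core, and the braid subword $s_i s_{i+1} s_i$ forces a specific local configuration of boxes of residues $i, i+1, i$ in the core, from which one reads off that the finite root $\vec e_i - \vec e_{i+2}$ (suitably translated by the action of $w_1$ to a genuine root $\vec e_p - \vec e_q$) has both indices on the same side of $\kk$ — intuitively because the braid pattern lives "locally" and a minimal coset representative cannot have used $s_\kk$ in a way that separates $p$ from $q$ here without destroying minimality or the ending-in-$s_0$ property. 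I would handle this by a case split on whether the residue $i+1$ of the middle generator equals $\kk$, equals $0$, or is generic: in the generic case the root stays within one block of $R_P^+$ directly; in the boundary cases I would instead apply the braid relation $s_i s_{i+1} s_i = s_{i+1} s_i s_{i+1}$ to rewrite $w$ and pick the complementary inversion $w_1 \cdot \alpha_i$ or $w_1 \cdot \alpha_{i+1}$, at least one of which avoids $\kk$, then invoke the $R^- + \delta$ clause of Lemma~\ref{small} exactly as in the computations in Lemmas~\ref{nosk} and \ref{hJP}. Once such an $\alpha \in R_P^+$ with $w\cdot\alpha \notin R^+\cup(R^-+\delta)$ is produced, Lemma~\ref{small} immediately gives $w \notin \tilde S_n^P$, completing the proof.
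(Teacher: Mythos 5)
Your overall strategy---exhibit one of the three inversions created by the braid subword as an element of $(R_P)^+_{af}$, forcing $w \notin \tilde S_n^P$---is sound, and in fact it can be completed into a proof that is arguably cleaner than the paper's. But as written there are two genuine problems. First, a bookkeeping error: with the paper's convention $\inv(w) = \{\beta \in R_{af}^+ \mid w\cdot\beta \in R_{af}^-\}$ and $w$ acting on the left, the inversions contributed by the reduced word are built from \emph{suffixes}, so the three roots created by the braid $s_is_{i+1}s_i$ in $w = w_1 s_is_{i+1}s_i w_2$ are $w_2^{-1}\cdot\alpha_i$, $w_2^{-1}\cdot\alpha_{i+1}$, and $w_2^{-1}\cdot(\alpha_i+\alpha_{i+1})$, not $w_1\cdot(\cdot)$; what you wrote down is the inversion set of $w^{-1}$. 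Second, and more seriously, the decisive step---showing that one of these roots has finite part in $R_P$---is exactly where you stop proving and start hoping. Your proposed case split on whether the residue $i+1$ equals $\kk$, $0$, or is generic cannot decide this: the finite part of $w_2^{-1}\cdot(\alpha_i+\alpha_{i+1})$ is $\vec e_{\sigma(i)} - \vec e_{\sigma(i+2)}$ where $\sigma$ is the image of $w_2^{-1}$ in $S_n$ (with $s_0 \mapsto s_\theta$), and $\sigma$ is completely uncontrolled by $i$. The claim that ``in the generic case the root stays within one block of $R_P^+$'' is simply false in general, and the appeal to a ``local configuration'' in the $n$-core does not produce the needed control, since the braid can sit arbitrarily deep inside the word.

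The fix is a pigeonhole argument over all three roots at once, which you gesture at but only for your ``boundary cases.'' The three inversions project to $\vec e_a - \vec e_b$, $\vec e_b - \vec e_c$, $\vec e_a - \vec e_c$ for the three distinct indices $a = \sigma(i)$, $b = \sigma(i+1)$, $c = \sigma(i+2)$; since $I_P = I\backslash\{\kk\}$ cuts $\{1,\dots,n\}$ into only two blocks, two of the three indices lie in the same block, so at least one of the three positive affine roots lies in $(R_P)^+_{af}$ and is an inversion of $w$. This uses the maximality of $P$ essentially (with two or more cuts the three indices could be separated) and needs neither Lemma~\ref{small} nor the hypothesis $w \in \tilde S_n^0$. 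The paper instead first normalizes the braid as far right as possible (Lemmas~\ref{braidtech1} and \ref{braidtech2}), which pins down $w_2 = s_{i+2,\dots,n-1,i-1,\dots,1,0}$ exactly, and then runs the same two-block pigeonhole on the two explicit roots $\beta_1 = \alpha_1+\cdots+\alpha_i$ and $\beta_2 = \alpha_{i+1}+\cdots+\alpha_{n-1}$, one of which must avoid $\alpha_\kk$, followed by a direct computation of $s_is_{i+1}s_iw_2\cdot\beta_j$. So the maximality-of-$P$ idea you need is the same one the paper uses; you just have to apply it to the right triple of roots rather than to the residue of the middle generator.
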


In order to prove Proposition \ref{braid}, we need two more technical facts. 

\begin{lemma}\label{braidtech1} Let $w=w_1w_2 \in \tilde S_n$ where $\len(w) = \len(w_1) +\len(w_2)$. If $w \in \tilde S_n^P$, then $w_2 \in \tilde S_n^P$. 
\end{lemma}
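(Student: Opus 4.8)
The plan is to argue by contrapositive: assuming $w_2 \notin \tilde S_n^P$, I would produce a positive affine root witnessing $w \notin \tilde S_n^P$. By Lemma \ref{small}, $w_2 \notin \tilde S_n^P$ means there is some finite root $\alpha \in R_P^+$ with $w_2 \cdot \alpha \notin R^+ \cup (R^- + \delta)$. The key observation I want to exploit is that the hypothesis $\len(w) = \len(w_1) + \len(w_2)$ forces $\inv(w_2) \subseteq \inv(w)$ — more precisely, the standard fact that for a length-additive factorization $w = w_1 w_2$ one has $\inv(w) = \inv(w_2) \sqcup w_2^{-1} \cdot \inv(w_1)$, so every affine root that $w_2$ sends negative is also sent negative by $w$.

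The main work is then a case analysis on what $w_2 \cdot \alpha$ actually is. Since $\alpha \in R_P^+$, write $w_2 \cdot \alpha = \gamma + p\delta$ with $\gamma \in R$. The condition $w_2 \cdot \alpha \notin R^+ \cup (R^- + \delta)$ rules out the two ``allowed'' shapes, leaving the possibilities: (i) $\gamma \in R^+$ and $p \le -1$; (ii) $\gamma \in R^-$ and $p \le 0$; (iii) $\gamma \in R^-$ and $p \ge 2$; (iv) $\gamma \in R^+$ and $p \ge 1$. In cases (i) and (ii), the root $w_2 \cdot \alpha$ is already in $R_{af}^-$, and since $\alpha \in (R_P)^+_{af}$, the inclusion $\inv(w_2) \subseteq \inv(w)$ gives $w \cdot \alpha \in R_{af}^-$ directly, so $w \notin \tilde S_n^P$. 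In cases (iii) and (iv), $w_2 \cdot \alpha$ is positive, so instead I apply the same trick to the companion root $-\alpha + \delta \in (R_P)^+_{af}$: then $w_2 \cdot (-\alpha + \delta) = -\gamma + (1-p)\delta$, which in case (iii) has $1 - p \le -1$ and in case (iv) has form $-\gamma + (1-p)\delta$ with $-\gamma \in R^-$, $1-p \le 0$ — in both subcases this lands in $R_{af}^-$, and again length-additivity propagates the inversion up to $w$. This is essentially the same bookkeeping already carried out inside the proof of Lemma \ref{small}, just applied to the inclusion $\inv(w_2)\subseteq \inv(w)$ rather than to a single element.

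I expect the main obstacle to be purely notational: making the length-additivity statement $\inv(w_2) \subseteq \inv(w)$ precise and correctly tracking the shift $\beta \mapsto \pm\beta + p\delta$ through the level-zero action, since the identification $R_{af}^+ \leftrightarrow$ (affine roots of positive height, together with $R^+$) has a slightly asymmetric boundary. One clean way to sidestep the case analysis entirely is to observe that $w \in \tilde S_n^P$ is equivalent to $\inv(w) \cap (R_P)^+_{af} = \emptyset$ (directly from the definition, since $\beta \in (R_P)^+_{af}$ and $w\cdot\beta < 0$ is exactly $\beta \in \inv(w)\cap (R_P)^+_{af}$); then the lemma is the one-line implication $\inv(w_2) \subseteq \inv(w) \Rightarrow \big(\inv(w)\cap (R_P)^+_{af} = \emptyset \Rightarrow \inv(w_2)\cap (R_P)^+_{af} = \emptyset\big)$, and the only thing left to justify is $\inv(w_2)\subseteq\inv(w)$ under length-additivity, which is entirely standard. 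I would present it in this streamlined form.
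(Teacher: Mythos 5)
Your streamlined version is exactly the paper's argument: the paper also reduces the lemma to the inclusion $\inv(w_2)\subseteq\inv(w)$ under length-additivity (justified via $\len(w)=|\inv(w)|$), applied to a root $\beta\in (R_P)^+_{af}$ with $w_2\cdot\beta\in R_{af}^-$. The detour through Lemma \ref{small} and the four-case bookkeeping is correct but unnecessary, since the definition of $\tilde S_n^P$ already quantifies over all of $(R_P)^+_{af}$; presenting the one-line form, as you propose, is the right call.
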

\begin{proof}
We argue by contradiction. Suppose there exists $\beta \in (R_P)_{af}^+$ such that $w_2 \cdot \beta \in R_{af}^-$. This means that $\beta \in \inv(w_2)$. Since $\beta \not \in \inv(w)$ by hypothesis, then after applying some $s_i$ which is a subexpression of $w_1$, the root $\beta$ will no longer be in the inversion set.  Therefore, since $\len(w) = |\inv(w)|$, the length of the expression has not increased with the addition of $s_i$, and so $w_1 w_2= w$ cannot satisfy $\len(w_1) + \len(w_2) =\len(w)$.
\end{proof}

Elements of a certain form will arise throughout the course of the next argument, and so we define the notation
\begin{equation}\label{sDsI}
s_{I,i,j}^p = s_{i,i+1,i+2,...,j-1,j} \quad \text{and} \quad s_{D,i,j}^p = s_{i, i-1,i-2,...,j+1,j}
\end{equation}
 to be a sequence of cyclically increasing, respectively decreasing, elements starting with $s_i$, ending with $s_j$, and containing $p$ copies of $s_0$.  For example, if $n=6$, then 
 \begin{equation}
 s_{I,1,3} = s_{1,2,3} \qquad\qquad s_{I,4,2}^1 = s_{4,5,6,0,1,2} \qquad\qquad s_{D,3,5}^2 = s_{3,2,1,0,6,5,4,3,2,1,0,6,5}. 
\end{equation}

\begin{lemma} Let $w \in \tilde S_n^0$. Suppose that  $w = w_1 s_{i}s_{i+1} s_{i} w_2$ is a reduced expression with $\len(w) = \len(w_1) + \len(s_i s_{i+1} s_i) +\len(w_2)$ for some $i \in \{0,1,..., n-1\}$.  If $i$ is chosen such that the braid $s_is_{i+1}s_i$ is as far right as possible in the reduced expression, then $w_2 = s_{i+2, i+3 ,..., n-1,i-1,i-2,...,1,0}$ and $i\not \in \{0,n-1\}$. \label{braidtech2}
\end{lemma}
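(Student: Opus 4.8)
The plan is to exploit the two structural facts we already have: that $w \in \tilde S_n^0$ forces every reduced expression to end in $s_0$, and that $w$ has a reduced expression with a braid $s_i s_{i+1} s_i$ sitting as far to the right as possible. First I would fix a reduced expression $w = w_1 s_i s_{i+1} s_i w_2$ realizing the stated length additivity with the braid chosen rightmost, and examine $w_2$. Since $w_2$ is the suffix of a reduced word for an element of $\tilde S_n^0$, it too must end in $s_0$ (any reduced word for $w$ ends in $s_0$, and the final letter of $w$ is the final letter of $w_2$). The key observation is that if $w_2$ contained any braid, or even two consecutive equal-index occurrences that could be rearranged into a braid, we could slide that braid strictly to the right of $s_i s_{i+1} s_i$, contradicting the rightmost choice; more precisely, I would argue that $w_2$ contains no three letters $s_j s_{j\pm 1} s_j$ in braid position, and moreover that the only possible "collision" between $s_i$ and $w_2$ — namely $w_2$ beginning with $s_{i+1}$ or $s_{i-1}$, which would create a longer braidable block — is likewise excluded by rightmost-ness and by length additivity.

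Next I would pin down the \emph{shape} of $w_2$. A reduced word with no braids and no repeated adjacent-index pattern, ending in $s_0$, and not extendable on the left by $s_{i\pm 1}$, must be a single "cyclically decreasing run" from some starting index down through $0$; the constraint that it attaches to $s_i s_{i+1} s_i$ on the left without shortening or braiding forces the starting index to be exactly $i-1$ coming down to $0$, together with the wrap-around piece $s_{i+2}, s_{i+3}, \dots, n-1$ that must precede it (so that no letter equal to $i$, $i+1$, or $i-1$ reappears and no cancellation with the braid occurs). This yields exactly $w_2 = s_{i+2,i+3,\dots,n-1,i-1,i-2,\dots,1,0}$. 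I would verify reducedness and the length count of this candidate directly (it is a subword of a cyclically decreasing element, hence reduced), and check that prepending $s_i s_{i+1} s_i$ keeps length additive while any other suffix either fails to end in $s_0$, repeats a letter, or admits a rightward braid slide.

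Finally, the endpoint condition $i \notin \{0, n-1\}$: if $i = n-1$ then $s_{i+1} = s_0$ and the prescribed $w_2$ would begin with $s_{i+2} = s_1$ and also end in $s_0$, forcing $s_0$ to appear both inside the braid $s_{n-1}s_0 s_{n-1}$ and at the end of $w_2$ — I would show this either violates reducedness of $w$ or lets us push the braid further right (since the trailing $s_0$ and the $s_0$ in the braid could be recombined), contradicting the choice; the case $i = 0$ is symmetric, using that $s_{i-1} = s_{n-1}$ would then be the first letter of the "decreasing" part and collide with the braid $s_0 s_1 s_0$.

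The main obstacle I anticipate is the bookkeeping in the "no rightward braid slide" argument: one must carefully track how commutation relations $s_a s_b = s_b s_a$ (for $|a-b| \not\equiv \pm 1$) and the braid relations interact inside $w_2$, and rule out \emph{all} ways a braid could be manufactured further right, not just the obvious ones. Making the phrase "as far right as possible" into a precise, usable minimality statement — and showing the candidate $w_2$ is the unique survivor — is where the real work lies; I would likely phrase it via inversion sets (as in the proof of Lemma \ref{braidtech1}) to make the sliding arguments rigorous rather than manipulating words by hand.
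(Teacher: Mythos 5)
Your plan correctly identifies the target form of $w_2$ and the right high-level strategy --- the same one the paper uses: exploit that $w\in\tilde S_n^0$ forces the expression to end in $s_0$, and repeatedly argue that any alternative configuration would let the braid slide further right. But the proposal stops exactly where the lemma's difficulty begins. Your central step is the assertion that ``a reduced word with no braids and no repeated adjacent-index pattern, ending in $s_0$, and not extendable on the left by $s_{i\pm 1}$, must be a single cyclically decreasing run \dots together with the wrap-around piece.'' That assertion \emph{is} the lemma, and it is not obviously true; the paper's proof spends essentially all of its length establishing it. Concretely, one must show that $w_2$ begins with $s_{i+2}$ or $s_{i-1}$ (the other options either shorten $w$ or commute past the braid); then, writing $s_is_{i+1}s_iw_2 = s_{i,i+1,i}s_{I,i+2,\rr}^{\pp}s_{D,i-1,\mm}^{\qq}v s_0$, one must run an induction extending the increasing and decreasing chains one letter at a time, and separately rule out $\qq'>0$, $\pp'>0$, $\mm'\neq 1$, and $\rr'\neq n-1$ --- each exclusion requiring its own sliding-or-braiding contradiction (e.g., a second copy of $s_0$ in the increasing chain lets you manufacture $s_{\rr',\rr'+1,\rr'}$ further right). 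None of this case analysis appears in your plan, and you acknowledge as much when you say the ``real work lies'' in the bookkeeping.

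Two further points. First, your suggestion to make the sliding arguments rigorous ``via inversion sets (as in Lemma \ref{braidtech1})'' is speculative: Lemma \ref{braidtech1} uses inversion sets to transfer a single root condition along a length-additive factorization, whereas here the minimality condition ``the braid is as far right as possible'' quantifies over all reduced expressions of $w$, and it is not clear how to encode that in terms of inversions; the paper instead manipulates words directly. Second, small inaccuracies: the possible first letters of $w_2$ are $s_{i+2}$ or $s_{i-1}$ (not ``$s_{i+1}$ or $s_{i-1}$,'' and the $s_{i-1}$ case is \emph{not} excluded --- it is one of the two live branches); and you should also rule out $w_2=\emptyset$ explicitly (if $w_2$ is empty then $i=0$ and the braid relation produces a reduced expression ending in $s_1$, contradicting $w\in\tilde S_n^0$). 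As written, the proposal is a correct outline of the statement to be proved rather than a proof of it.
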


\begin{proof}
Let $w_3 = s_i s_{i+1} s_i w_2$.
Note that $w_2$ must begin with either $s_{i+2}$ or $s_{i-1}$, since otherwise we could move the braid further to the right. Suppose $w_2$ begins with $s_{i+2}$. Because we cannot move the four terms $s_{i}s_{i+1}s_{i}s_{i+2}$ to the right, either $s_{i-1}$ appears to the right or $s_{i+3}$ does. Suppose that $s_{i-1}$ does not appear, so $s_{i+3}$ is to the right of $s_{i,i+1,i,i+2}$. Now assume that this same case occurs at each step where $s_{\rr+1}$ appears to the right of $s_\rr$. Ultimately, we will get the word
$w_3 = s_{i,i+1,i,i+2}s_{I,i+3,n-1}^\pp s_0$. If $\pp\ne 0$,  then we can slide $s_i$ to the right until we have $s_{i,i+1}s_{I,i+2,i-1}^1s_{i,i-1,i}s_{I,i+1,n-1}^{\pp-1}s_0$, which is a contradiction since this subexpression of $w$ contains a braid farther to the right than the original.
Therefore, $w_3 = s_{i,i+1,i,}s_{I,i+2,n-1}^0s_{0}$, but this is also a contradiction as the second $s_i$ commutes with everything to its right and $w \notin \tilde S_n^0$, unless $i=1$, $i=n-1$, or $i=0$. In the first case, $w_2$ has the claimed form. In the second case, $w_3 = s_{n-1,0,n-1,0} = s_{0,n-1}$ and so $w \not \in \tilde S^0_n$. In the third case, we slide $s_0$ to the right to get a braid $s_{0,n-1,0}$, which is another contradiction.  Therefore, $w_3 = s_{i,i+1,i}s_{I,i+2,\rr}^\pp s_{D,i-1,\mm}^{\qq}v s_0$ where $\pp\ge 0$, $\qq \ge 0$, and $v\in \tilde S_n$. 

Now consider $w_3$ expressed in this form.  We can always increase $\rr$ or decrease $\mm$ if there are still terms to the right of $s_{D,i-1,\mm}^\qq$ (besides $s_0$). Note that if we can slide $s_{D,i-1,\mm}^{\qq}$ to the right, then since we cannot slide $s_{I,i+2,\rr}^\pp$ to the right without shortening $w_2$, we must have
$w_3 = s_{i,i+1,i}s_{I,i+2,\rr}^\pp s_{\rr+1} s_{D,i-1,\mm}^{\qq}v' s_0$  for some new $v' \in \tilde S_n$ with $\len(v') = \len(v) - 1$ as any other term would cause a braid farther right (or would commute with everything). If we cannot move $s_{D,i-1,\mm}^{\qq}$ to the right, there must be an $s_{\mm-1}$ to its right by the same logic. Therefore, we increase the lengths of these chains at every step until we have
$w_3 = s_{i,i+1,i}s_{I,i+2,\rr'}^{\pp'} s_{D,i-1,\mm'}^{\qq'} s_0$ for some new $\pp', \qq', \rr',$ and $\mm'$.

If $\mm'\ne 1$ then $w \notin \tilde S_n^0$, and so $\mm'=1$. If $\qq' >0$, we know that by sliding $s_{\rr'}$ to the right, we will encounter the subexpression $s_{\rr',\rr'+1,\rr'}$, which gives us a braid further right than the original.  Therefore, we also have that $\qq'=0$. Similarly, $\rr' = n-1$, since otherwise we can slide $s_{\rr'}$ to the right until we have $s_{\rr',\rr'+1,\rr'}$ if $0 \le \rr'  < i+1$, or $w \not \in \tilde S_n^0$ as $s_{\rr'}$ commutes with everything to its right. Finally, $\pp'=0$ by the same argument as before.
Therefore, $w_3= s_{i,i+1,i}s_{I,i+2,n-1}^0s_{D,i-1,1}^0 s_0 = s_{i,i+1,i,i+2,...,n-1,i-1,...,1,0}$, and $w_2$ has the desired reduced expression. 

Finally, notice that, because $i+2$ and $i-1$ must
appear to the right of the braid, $i$ cannot be $0$ or $n-1$. If both $s_{I,i+2,n-1}^0$ and $s_{D,i-1,1}^0$ appear, we will have cancellation between the increasing and decreasing portions  (if $i=0$) or a braid appearing further right (if $i=n-1$). If only the increasing or decreasing portion appears instead, the expression will not be forced to end in $s_0$. Therefore, when the braid is farthest right, we know that $i\not \in \{0,n-1\}$.
\end{proof}

With these two lemmas in hand, we are now prepared to prove Proposition \ref{braid}.

\begin{proof}[Proof of Proposition~\ref{braid}]
By Lemma~\ref{braidtech2}, we can write $w = w_1s_{i}s_{i+1}s_{i} w_2$, where we have moved the braid as far right as possible, and so therefore $w_2 = s_{i+2,i+3...,n-1,i-1,i-2...,1,0}$. Consider the following pair of affine roots:
\begin{equation}
\beta_1 = \alpha_1 + \cdots + \alpha_i \quad \text{and} \quad \beta_{2} = \alpha_{i+1} + \cdots + \alpha_{n-1}.
\end{equation}
Since $I_P = I \backslash \{\kk\}$ we only remove one root to obtain $\Delta_P$, namely $\alpha_\kk$. Thus, one of the two roots $\beta_1$ or $\beta_2$ is guaranteed to be in $(R_P)^+_{af}$.

Now, calculate that $w_2 \cdot \beta_1 = -\alpha_{i+1}+ \delta$ and $w_2 \cdot \beta_2 = -\alpha_i + \delta$.  Therefore, when we act next by the braid $s_is_{i+1}s_i$, we obtain $s_{i}s_{i+1}s_i w_2 \cdot \beta_1 = \alpha_i + \delta$ and $s_{i}s_{i+1}s_iw_2 \cdot \beta_2 = \alpha_{i+1} + \delta$. Therefore, $s_{i}s_{i+1}s_i w_2 \cdot (-\beta_j + \delta) \in R_{af}^-$ for both $j=1,2$.   We have thus proved that there is a root $\beta \in (R_P)^+_{af}$ such that $s_{i}s_{i+1}s_{i}w_2 \cdot \beta \in R_{af}^-$.  By Lemma~\ref{braidtech1} we have that $w \not \in \tilde S_n^P$. 
\end{proof}


\subsection{The projection $\pi_P$ on the affine symmetric group}

Recall Lemma \ref{Wfac}, which says that all elements $w \in \tilde S_n$ admit a unique factorization 
\[ w   = w_1 w_2 \quad \text{where} \quad w_1 \in \tilde S_n^P \quad \text{and} \quad w_2 \in (\tilde S_n)_{P}.\]  
This lemma leads to the definition of the map $\pi_P$, which projects onto the first factor in this decomposition and plays the second critical role in the parabolic Peterson isomorphism:
\[
\pi_P : \tilde S_n \to \tilde S_n^P \qquad \text{by} \qquad \pi_P(w) = w_1.
\]
This section will thus be devoted to understanding the map $\pi_P$ in the case of $Gr(m,n)$.

We begin by recording several useful facts about how $\pi_P$ behaves on translation elements.

\begin{lemma}[Proposition 10.10 \cite{LS10}] Let $\lambda,\mu \in Q$, and let $\nu \in Q_P$.  Then,

1) $\pi_P(t_{\lambda + \mu}) = \pi_P(t_{\lambda})\pi_P(t_\mu)$

2) $\pi_P(t_{\lambda + \nu}) = \pi_P(t_{\lambda})$. \label{piPfacts}
\end{lemma}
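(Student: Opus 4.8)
The two statements concern how the projection $\pi_P$ interacts with the group structure of $\tilde S_n$, and both follow from uniqueness in the factorization of Lemma \ref{Wfac} once we establish that $\tilde S_n^P$ and $(\tilde S_n)_P$ behave compatibly with multiplication. The key structural fact I would isolate first is that $(\tilde S_n)_P = \{v t_\mu \mid v \in (S_n)_P,\ \mu \in Q_P\}$ is a \emph{subgroup} of $\tilde S_n$: it is closed under multiplication because $(S_n)_P$ permutes among itself the roots in $R_P$, hence $v t_\mu \cdot v' t_{\mu'} = (vv') t_{(v')^{-1}\cdot \mu + \mu'}$, and $(v')^{-1}\cdot \mu \in Q_P$ since $Q_P$ is $(S_n)_P$-stable. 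I would also note that $\tilde S_n^P$, although not a subgroup in general, has the property that if $w_1, w_1' \in \tilde S_n^P$ and $w_1' \in (\tilde S_n)_P$-coset position relative to $w_1$, the factorization is still forced; more usefully, I would record the small lemma that for $u \in (\tilde S_n)_P$, one has $\pi_P(wu) = \pi_P(w)$ — this is essentially part (2) in disguise, since $t_\nu$ with $\nu \in Q_P$ lies in $(\tilde S_n)_P$.

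For part (2): write the factorization $t_\lambda = w_1 w_2$ with $w_1 \in \tilde S_n^P$, $w_2 \in (\tilde S_n)_P$. Then $t_{\lambda+\nu} = t_\lambda t_\nu = w_1 (w_2 t_\nu)$, and since $t_\nu \in (\tilde S_n)_P$ and $(\tilde S_n)_P$ is a subgroup, $w_2 t_\nu \in (\tilde S_n)_P$. By uniqueness of the factorization in Lemma \ref{Wfac}, $\pi_P(t_{\lambda+\nu}) = w_1 = \pi_P(t_\lambda)$. This is the short one.

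For part (1): this is the genuine content, and I expect it to be the main obstacle because it asserts a \emph{multiplicativity} of $\pi_P$ that is not automatic from uniqueness alone — one needs to know that translation elements, after projecting, still multiply correctly in $\tilde S_n^P$. The approach I would take: write $t_\lambda = a_1 a_2$ and $t_\mu = b_1 b_2$ with $a_1, b_1 \in \tilde S_n^P$ and $a_2, b_2 \in (\tilde S_n)_P$. Then $t_{\lambda+\mu} = t_\lambda t_\mu = a_1 a_2 b_1 b_2$. The obstruction is that $a_2 b_1$ is not in the right order. The crucial claim to prove is that $a_2 b_1 a_2^{-1} \in \tilde S_n^P$, i.e. that conjugating $\tilde S_n^P$ by an element of $(\tilde S_n)_P$ — at least when the conjugator is $a_2 = t_\lambda w_1^{-1}$ of this special form arising from a translation — lands back in $\tilde S_n^P$; combined with $a_2 b_1 = (a_2 b_1 a_2^{-1}) a_2$ this would give $t_{\lambda+\mu} = a_1 (a_2 b_1 a_2^{-1}) a_2 b_2$ with the first factor $a_1(a_2 b_1 a_2^{-1})$ needing to be shown to lie in $\tilde S_n^P$ and equal to $\pi_P(t_\lambda)\pi_P(t_\mu) = a_1 b_1$. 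In fact the cleanest route is probably to avoid conjugation entirely and instead verify directly, using the root-theoretic definition $\tilde S_n^P = \{w \mid w\cdot\beta \in R_{af}^+ \text{ for all } \beta \in (R_P)^+_{af}\}$, that $\pi_P(t_\lambda)\pi_P(t_\mu) \in \tilde S_n^P$ and that $\left(\pi_P(t_\lambda)\pi_P(t_\mu)\right)^{-1} t_{\lambda+\mu} \in (\tilde S_n)_P$; uniqueness then closes it. Here the point is that the translation subgroup $Q$ is abelian and normal, so $t_\lambda t_\mu = t_\mu t_\lambda$, and the $(S_n)_P$-component of $\pi_P(t_\lambda)$ is trivial (a projected translation has trivial finite part, since its "defect" from being a translation is absorbed entirely into $(\tilde S_n)_P$) — this last observation, which I would extract as a preliminary sub-claim from the structure of $\tilde S_n = S_n \ltimes Q$, is what makes the two projected translations commute and multiply cleanly. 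Since this is stated as Proposition 10.10 of \cite{LS10}, I would in practice cite that reference for the proof and only sketch the uniqueness argument for part (2) and the commuting-translations observation for part (1) to keep the exposition self-contained.
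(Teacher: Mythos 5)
First, note that the paper does not actually prove this lemma: it is imported verbatim as \cite[Proposition 10.10]{LS10}, so there is no internal argument to compare yours against. Judged on its own terms, your proof of part (2) is correct and complete: $(\tilde S_n)_P$ is a subgroup (your verification via $v t_\mu\, v' t_{\mu'} = vv'\, t_{(v')^{-1}\mu + \mu'}$ and the $(S_n)_P$-stability of $Q_P$ is right), $t_\nu \in (\tilde S_n)_P$ for $\nu \in Q_P$, and uniqueness in Lemma \ref{Wfac} does the rest.

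Part (1), however, rests on a false sub-claim. You assert that ``the $(S_n)_P$-component of $\pi_P(t_\lambda)$ is trivial (a projected translation has trivial finite part),'' and you flag this as the observation that makes the two projected translations commute and multiply cleanly. This is not true: writing $t_\lambda = \pi_P(t_\lambda)\, u t_\nu$ with $u \in (S_n)_P$, $\nu \in Q_P$, one gets $\pi_P(t_\lambda) = u^{-1} t_{u(\lambda - \nu)}$, whose finite part is $u^{-1}$, and $u$ is generally nontrivial. The paper's own Proposition \ref{piP} is an explicit counterexample: $\pi_P(t_{-\theta}) = v\, t_{-\theta}$ with $v = s_{\kk+1,\dots,n-1,\kk-1,\dots,1} \neq 1$ (concretely, for $Gr(1,3)$, $\pi_P(t_{-\theta}) = s_{120} = s_2 t_{-\theta}$, finite part $s_2$). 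With that crutch removed, what remains of your plan for (1) is the uniqueness scheme: show that $\pi_P(t_\lambda)\pi_P(t_\mu) \in \tilde S_n^P$ and that $\bigl(\pi_P(t_\lambda)\pi_P(t_\mu)\bigr)^{-1} t_{\lambda+\mu} \in (\tilde S_n)_P$. The second of these does go through (it reduces, after unwinding the semidirect product, to the fact that $u\mu - \mu \in Q_P$ for $u \in (S_n)_P$, which follows since each $s_\alpha$ with $\alpha \in R_P$ moves any weight by a multiple of $\alpha$). But the first --- closure of $\tilde S_n^P$ under this particular product --- is precisely the root-theoretic content of the statement, and your proposal defers it to ``verify directly'' without an argument. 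So the genuinely hard step is left open, and the shortcut you offered in its place does not exist. Your fallback of simply citing \cite{LS10} is, of course, exactly what the paper does.
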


Several detailed examples of calculations with $\pi_P$ applied to translation elements in which $P$ is a maximal parabolic subgroup are provided in groups of low rank in Example 10.9 of \cite{LS10}, and so we do not repeat any similar examples here.  We remark that generalizing those examples provided the inspiration for Proposition \ref{piP} below.

In practice, to use the parabolic Peterson isomorphism to do quantum Schubert calculus, it suffices to understand the image under $\pi_P$ of a single translation element, namely $\pi_P(t_{-\theta})$, where recall that the highest root is defined as $\theta = \sum \alpha_i = \vec e_1 - \vec e_n$. Therefore, we will really only need the following proposition to do computations in $QH^*(Gr(m,n))$.

\begin{prop}\label{piP} 
For $I_P= I\backslash\{\kk\}$, we have 
\begin{equation}
\pi_P(t_{-\theta}) = s_{\kk,\kk+1,...,n-1,\kk-1,...,1,0}.
\end{equation}
\label{piP}
\end{prop}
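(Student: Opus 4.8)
The plan is to verify directly that the proposed element $\sigma = s_{\kk,\kk+1,\dots,n-1,\kk-1,\dots,1,0}$ is precisely the factor $w_1 \in \tilde S_n^P$ in the unique factorization $t_{-\theta} = w_1 w_2$ guaranteed by Lemma~\ref{Wfac}, where $w_2 \in (\tilde S_n)_P$. By uniqueness, it suffices to exhibit \emph{some} factorization $t_{-\theta} = \sigma \cdot w_2$ with $\sigma \in \tilde S_n^P$, $w_2 \in (\tilde S_n)_P$, and $\len(t_{-\theta}) = \len(\sigma) + \len(w_2)$ (the length-additivity is automatic from Lemma~\ref{Wfac}, but checking it gives a useful sanity check on the candidate $w_2$).

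First I would pin down $w_2$. Since $t_{-\theta}$ has a standard reduced expression (for instance the one visible in the $\tilde A_2$ example, $t_{-\theta} = s_{1210}$, generalizing to $t_{-\theta} = s_{n-1}\cdots s_1 s_0 s_1 \cdots s_{n-1}$ or a similar cyclically built word of length $2(n-1)$), and since $\sigma$ as written has length $2(n-1-\kk) + \kk$... here I should instead compute $w_2 := \sigma^{-1} t_{-\theta}$ group-theoretically using the semidirect product structure $\tilde S_n \cong S_n \ltimes Q$ and the conjugation rule $t_{w\cdot\alpha} = w t_\alpha w^{-1}$. Writing $\sigma = u \, t_\nu$ with $u \in S_n$, $\nu \in Q$, one computes $\sigma^{-1} t_{-\theta} = t_{-\nu} u^{-1} t_{-\theta} = t_{-\nu} t_{-u^{-1}\cdot\theta} u^{-1}$, and the goal is to show this lands in $(\tilde S_n)_P = \{ v t_\mu : v \in (S_n)_P, \mu \in Q_P\}$, i.e.\ that its finite part lies in $S_{\kk}\times S_{n-\kk}$ and its translation part lies in $Q_P = \ZZ\text{-span}\{\alpha_i : i \neq \kk\}$. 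Concretely I would track the one-line notation of $\sigma$ as a permutation-plus-translation; the cyclic shape of the word $s_\kk s_{\kk+1}\cdots s_{n-1} s_{\kk-1}\cdots s_1 s_0$ suggests $\sigma$ acts as a specific ``rotation'' sending the fundamental alcove across a controlled sequence of hyperplanes, and its finite projection should be the Grassmannian permutation swapping the two blocks $\{1,\dots,\kk\}$ and $\{\kk+1,\dots,n\}$ appropriately.

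The verification that $\sigma \in \tilde S_n^P$ is where I would lean on the machinery already built in this section. Rather than checking the positivity condition $\sigma \cdot \beta \in R_{af}^+$ on all of $(R_P)^+_{af}$ directly, I would invoke Lemma~\ref{small} to reduce to checking $\sigma\cdot\alpha \in R^+ \cup (R^- + \delta)$ for the finitely many $\alpha \in R_P^+$ — that is, for all $\alpha = \vec e_i - \vec e_j$ with $i<j$ and either $j \le \kk$ or $i \ge \kk+1$. For these I would compute $\sigma\cdot\alpha$ by applying the simple generators of $\sigma$ one at a time (right to left), exactly as in the proof of Lemma~\ref{nosk}: the increasing run $s_\kk s_{\kk+1}\cdots s_{n-1}$ and the decreasing run $s_{\kk-1}\cdots s_1 s_0$ act on the indices $i,j$ in a predictable telescoping way, and one sees that roots supported entirely in the first block or entirely in the second block never get pushed below $R^- + \delta$. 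This is essentially a more streamlined instance of the case analysis in Lemma~\ref{nosk}, now with the single extra generator $s_\kk$ present, so the argument there is the template.

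The main obstacle I anticipate is bookkeeping rather than conceptual: correctly identifying the translation part $\nu$ and finite part $u$ of $\sigma$, and then checking that $w_2 = \sigma^{-1}t_{-\theta}$ genuinely lies in $(\tilde S_n)_P$ — in particular that its translation component lies in $Q_P$ (has zero $\alpha_\kk$-coefficient) and its permutation component lies in $(S_n)_P = S_\kk \times S_{n-\kk}$. Getting the indexing conventions right (indices mod $n$, the asymmetric action of $s_0$ from \eqref{E:S_nAction}, and the direction in which alcove-hyperplane reflections compose) is the delicate part. A useful cross-check is to specialize to $n=3$, $\kk=1$: the formula predicts $\pi_P(t_{-\theta}) = s_{1,\dots,n-1,\dots,1,0}$ with $\kk=1$ giving $s_{1,2,0}$, i.e.\ $s_{120}$, which is exactly one of the shaded minimal-length representatives in Figure~\ref{Fig:J_PEx} and is consistent with the factorization $t_{-\theta} = s_{1210} = s_{120}\cdot s_1$ where $s_1 \in (S_3)_P$; confirming this small case would validate the conventions before writing out the general argument.
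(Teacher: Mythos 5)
Your overall strategy---exhibit a factorization $t_{-\theta}=\sigma\cdot w_2$ with $\sigma=s_{\kk,\kk+1,\dots,n-1,\kk-1,\dots,1,0}\in\tilde S_n^P$ and $w_2\in(\tilde S_n)_P$, then invoke uniqueness in Lemma~\ref{Wfac}---is sound in principle, and your use of Lemma~\ref{small} to verify $\sigma\in\tilde S_n^P$ matches what the paper does. But there are two concrete errors that would derail the execution. First, the factorization of Lemma~\ref{Wfac} is \emph{not} length-additive for the length function $\ell$ on $\tilde S_n$ (the lemma makes no such claim, and the true statement in \cite{LS10} involves the intrinsic length of $(\tilde S_n)_P$ as an affine Weyl group of the sub-root-system $R_P$, not the restriction of $\ell$). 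Your own test case shows this: for $n=3$, $\kk=1$ one has $t_{-\theta}=s_{1210}$ of length $4$, $\sigma=s_{120}$ of length $3$, and the actual second factor is $w_2=\sigma^{-1}t_{-\theta}=s_{010}=t_{\alpha_2}s_2$ of length $3$, so $4\neq 3+3$. Using length-additivity as a ``sanity check'' would cause you to reject the correct answer. Second, your cross-check itself is wrong on both counts: $s_{120}\cdot s_1=s_1s_2s_0s_1\neq s_1s_2s_1s_0=s_{1210}$ (no two generators of $\tilde S_3$ commute), and $s_1\notin(S_3)_P$ for $Gr(1,3)$, since $I_P=\{2\}$ and $(S_3)_P=\langle s_2\rangle$. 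The correct decomposition is $s_{1210}=s_{120}\cdot(s_0s_1s_0)$ with $s_0s_1s_0=t_{\alpha_2}s_2\in(\tilde S_3)_P$.

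The missing idea is the paper's opening move: by Lemma~\ref{piPfacts}(2), since $\alpha_\kk-\theta\in Q_P$ one has $\pi_P(t_{-\theta})=\pi_P(t_{-\alpha_\kk})$, and $t_{-\alpha_\kk}=vt_{-\theta}v^{-1}$ for $v=s_{\kk+1,\dots,n-1,\kk-1,\dots,1}\in(S_n)_P$. This makes the second factor of the decomposition simply $v^{-1}\in(S_n)_P\subset(\tilde S_n)_P$, with nothing to verify, and reduces the whole proof to (i) a braid/commutation computation showing $vt_{-\theta}=s_{\kk,\kk+1,\dots,n-1,\kk-1,\dots,1,0}$ and (ii) the Lemma~\ref{small} check that this element lies in $\tilde S_n^P$, done exactly as in Lemma~\ref{nosk}. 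Your direct route of computing $w_2=\sigma^{-1}t_{-\theta}=t_{\theta-v^{-1}\cdot\theta}\,v^{-1}$ can be salvaged (one checks $\theta-v^{-1}\cdot\theta\in Q_P$ because $(S_n)_P$ acts trivially on $Q/Q_P$), but as written you have neither identified $w_2$ nor supplied the computation, and the bookkeeping you defer is precisely where the content lies.
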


Before proving the proposition, we provide an example illustrating how it gets used in practice.

\begin{ex}
	We illustrate the element $\pi_P(t_{-\theta})$ for $P$ such that $SL_8(\CC)/P = Gr(3,8)$ below.  
	\begin{equation*}\small
	\tableau[Ys]{\textcolor{red}{\bullet} &\textcolor{red}{\bullet} & \textcolor{red}{\bullet}  \\
		\textcolor{red}{\bullet} &\textcolor{blue}{\bullet} & \textcolor{blue}{\bullet}  \\
		\textcolor{red}{\bullet} & \textcolor{blue}{\bullet}  \\
		\textcolor{red}{\bullet} & \textcolor{blue}{\bullet}  \\
		\textcolor{red}{\bullet} & \textcolor{blue}{\bullet}  \\
		\textcolor{red}{\bullet} & \textcolor{blue}{\bullet}  \\
		\textcolor{blue}{\bullet} & \textcolor{blue}{\bullet} 
	} 
	\end{equation*}
	The red dots indicate the boxes which belong to the 7-bounded partition corresponding to $u = \pi_P(t_{-\theta}) = s_{34567210}$, and the 7-bounded partition outlined by taking both the red and blue dots together corresponds to the element $u^2 = \pi_P(t_{-2\theta})$.  In general, the powers of the element $\pi_P(t_{-\theta})$ for $Gr(m,n)$ correspond to the $k$-bounded partitions obtained by adding successive $n$-rim hooks, each of which start in column 1 and end in column $m$; compare the construction of $\blacktriangle(\lambda)$ in \cite{LM2008}. See Example \ref{PetEx} for a more detailed discussion of the role of the powers of the element $\pi_P(t_{-\theta})$ in doing calculations in $QH^*(Gr(m,n))$.
\end{ex}

\begin{proof}[Proof of Proposition \ref{piP}]
Using Lemma~\ref{piPfacts}, since $\alpha_\kk-\theta \in Q_P$, we compute $\pi_P(t_{-\alpha_\kk})=\pi_P(t_{-\theta})$. 

It is easy to observe geometrically that
$t_{-\theta} = s_{\theta}s_0$, where $s_\theta = \tau_{1n}$ is the reflection in $S_n$ corresponding to $\theta$, which is the transposition that interchanges positions 1 and $n$.  Then, writing down a reduced expression for $\tau_{1n}$, we obtain
\[t_{-\theta}=\tau_{1n}s_0= s_{1,2...n-2,n-1,n-2,...2,1,0},\] 
 Note that $t_{-\alpha_\kk}$ can
be found using $t_{w \cdot \alpha} =w t_{\alpha} w^{-1}$. Letting 
\[v = s_{\kk+1,\kk+2,...,n-1,\kk-1,\kk-2,...,1},\]
it is straightforward to verify that 
\begin{equation}
v \cdot (-\theta) = v \cdot( -\alpha_1 - ... - \alpha_{n-1}) = -\alpha_\kk,
\end{equation}
and so $t_{-\alpha_\kk} = v t_{-\theta} v^{-1}$. We claim that $v t_{-\theta} =  s_{\kk,\kk+1,...,n-1,\kk-1,...,1,0}$. Using the notation of Equation \eqref{sDsI}, we can compute that 
\begin{equation}
\begin{aligned}
v t_{-\theta} &= v s_{I,1,n-2}^0s_{D,n-1,1}^0s_{0} \\
& = s^0_{I,\kk+1,n-1}s^0_{I,\kk,n-2}s^0_{D,n-1,1}s_{0}\\
& = s^0_{I,\kk+1,n-2}s^0_{I,\kk,n-3}s_{n-1,n-2,n-1,n-2}s^0_{D,n-3,1}s_{0} \\
& = s^0_{I,\kk+1,n-2}s^0_{I,\kk,n-3}s_{n-2,n-1} s^0_{D,n-3,1}s_{0}\\
& = s^0_{I,\kk+1,b+1}s^0_{I,\kk,n-1}s^0_{D,b,0} \\
& =s^0_{I,\kk+1,b} s^0_{I,\kk,b-1}s_{b+1,b,b+1,b} s^0_{I,b+2,n-1}s^0_{D,b-1,0}\\
& = s^0_{I,\kk+1,b}s^0_{I,\kk,n-1}s^0_{D,b-1,0}\qquad\qquad \text{(which we repeat until $b=\kk+1$)}\\
& = s_{\kk+1,\kk, \kk+1, \kk+2,...,n-1,\kk,\kk-1,...,1,0} \\
& = s_{\kk,\kk+1,...,n-1,\kk-1,\kk-2,...,1,0},
\end{aligned}
\end{equation}
as claimed.  The idea is that the decreasing part of $v$ cancels immediately, and, one-by-one, we can braid the terms in the increasing part of $v$ to get a cancellation.

Clearly $v^{-1} \in (S_n)_P \subset (\tilde S_n)_{P}$, and so if we can show that
$v t_{-\theta} \in \tilde S_n^P$, then we have factored $t_{-\alpha_\kk}$ as in Lemma~\ref{Wfac}. As in the proof of Lemma~\ref{nosk}, we consider $\beta = e_i-e_j =\alpha_{i} + \cdots + \alpha_{j-1}$ with $0<i<j$ and $j\le \kk$. Then,
\begin{equation}
s_{\kk,\kk+1,...,n-1,\kk-1,...,1,0} \cdot \beta = 
\begin{cases} 
\alpha_{i-1} + \cdots + \alpha_{j-2} & \text{ if $i>1$,} \\
 \alpha_{\kk} + \cdots + \alpha_{n-1} + \alpha_0 + \cdots + \alpha_{j-2} & \text{ if $i=1$},
\end{cases}
\end{equation}
since $j \le \kk$, we see that $vt_{-\theta} \cdot  \beta \in R^+ \cup (R^- + \delta)$. With a similar calculation, if instead we have
$ \beta = e_i-e_j$ with $i < j \le n$ and $i\ge \kk+1$, we  can show that
$vt_{-\theta} \cdot \beta \in R^+ \cup (R^- + \delta)$.  By Lemma~\ref{small}, we then know that $ v t_{-\theta} \in \tilde S_n^P$, and so $t_{-\alpha_m} = (vt_{-\theta})(v^{-1})$ is a factorization as in Lemma \ref{Wfac}.  Therefore, \[\pi_P(t_{-\theta})  = \pi_P(t_{-\alpha_m}) = v t_{-\theta} = s_{\kk,\kk+1,...,n-1,\kk-1,...,1,0},\] as desired.
\end{proof}

\subsection{The Peterson isomorphism for the Grassmannian}

We are now prepared to present the statement of the parabolic Peterson isomorphism for the special case of the Grassmannian $Gr(m,n)$.  Theorem \ref{GrPeterson} is an adaptation of the general parabolic Peterson isomorphism stated in \cite{Pet} and \cite{LS10} using the results of this section.  Following the statement, we provide examples based on our discussions of the ideal $J_P$ and the map $\pi_P$ which illustrate an effective means for computing products in $QH^*(Gr(m,n))$.

Recall that the map $\eta_P : Q \mapsto Q / Q_P$ is the natural surjection generated by sending $\alpha_i \mapsto 0$ for any $\alpha_i \in \Delta_P$.  In particular, for $I_P = I \backslash \{\kk\}$, this map preserves only $\alpha_\kk$.  We denote $q_m$ simply by $q$ and suppress the notation $\eta_P$, since there is only one quantum parameter in this case.

\begin{thm}[Peterson Isomorphism for the Grassmannian] \label{GrPeterson}
For $I_P = I \backslash \{\kk\}$, there is a $\ZZ$-algebra isomorphism
\begin{equation}
\begin{aligned}
\Psi_P : ( H_*(\mathcal{G}r)/ J_P)[ (\overline{j}_{u}^0)^{-1}]& \longrightarrow QH^*(Gr(\kk,n))[q^{-1} ] \\
\overline{j}_{v u^r}^0 &\longmapsto q^{-r} \sigma_v
\end{aligned} \label{PetEq}
\end{equation}
for $v \in S_n^P$ a minimal length coset representative and any $r \in \NN$ such that $vu^r \in \tilde S_n^0$.  Here, $u \in \tilde S_n^0$ is the hook shape corresponding to the element
\begin{equation}
u = s_{\kk,\kk+1,...,n-1,\kk-1,...,1,0}.
\end{equation} 
\end{thm}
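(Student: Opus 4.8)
The plan is to obtain Theorem~\ref{GrPeterson} as a direct specialization of the general parabolic Peterson isomorphism (Theorem~\ref{T:ParPeterson}) to the case $I_P = I \backslash \{\kk\}$, using the three main ingredients assembled in this section: the identification of $\pi_P(t_{-\theta})$ from Proposition~\ref{piP}, the multiplicativity of $\pi_P$ on translations from Lemma~\ref{piPfacts}, and the fact that the $j$-basis coincides with the non-commutative $k$-Schur functions (Theorem~\ref{jBasiskSchur}), though the latter will only be needed to rephrase the statement in terms of $\overline{\bf s}^{(k)}_{\lambda(\cdot)}$ as in Theorem~\ref{mainintro}. First I would observe that since $\kk$ is the unique index removed from $I$, there is exactly one quantum parameter, so $q^{\eta_P(\lambda)}$ collapses to a power of $q := q_\kk$; concretely, writing $\lambda = -r\theta \in Q$ we get $\eta_P(-r\theta) = -r\eta_P(\theta) = -r\alpha_\kk$ (mod $Q_P$), so $q^{\eta_P(-r\theta)} = q^{-r}$.

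Next I would handle the translation elements. Set $u = \pi_P(t_{-\theta})$, which by Proposition~\ref{piP} equals $s_{\kk,\kk+1,\dots,n-1,\kk-1,\dots,1,0}$. Applying part~(1) of Lemma~\ref{piPfacts} repeatedly gives $\pi_P(t_{-r\theta}) = \pi_P(t_{-\theta})^r = u^r$ for every $r \ge 1$. Since $\theta = \vec e_1 - \vec e_n$ is the highest root, $-r\theta$ is antidominant for all $r \ge 1$, so the elements $t_{-r\theta}$ range over a cofinal subset of $Q^-$; thus localizing at $\overline{j}^0_u$ (equivalently, at the single class $\overline{\bf s}^{(k)}_{\lambda(u)}$, a hook shape since $u$ is cyclically decreasing-then-increasing in the sense that its $n$-core is a hook — one checks this against the core/bounded-partition dictionary of Section~\ref{Sec:AffSchClasses}) suffices to invert all the classes $\overline{j}^0_{\pi_P(t_\mu)}$, $\mu \in Q^-$, that appear in the domain of Theorem~\ref{T:ParPeterson}. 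Indeed $j^0_{\pi_P(t_{-r\theta})} = (j^0_u)^{\cdot r}$ is a product rather than a single basis element, but inverting $\overline{j}^0_u$ inverts all of these at once, and conversely any $\mu \in Q^-$ has $\mu - r\theta \in Q_P$-translate lying suitably deep, so by part~(2) of Lemma~\ref{piPfacts} its image under $\pi_P$ is already a power of $u$ up to an element that is a unit after this localization. This is the step I expect to require the most care: matching the multiplicative set $\{(\overline{j}^0_{\pi_P(t_\mu)})^{-1} \mid \mu \in Q^-\}$ of Theorem~\ref{T:ParPeterson} with the single generator $\overline{j}^0_u$, i.e.\ checking that localizing at $\overline{j}^0_u$ is the same localization, which amounts to showing every $\overline{j}^0_{\pi_P(t_\mu)}$ becomes invertible and that $\overline{j}^0_u$ itself is among them (it is, being $\pi_P(t_{-\theta})$).

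Finally I would plug in: for $v \in S_n^P$ and $r \in \NN$ with $vu^r \in \tilde S_n^0$, Theorem~\ref{T:ParPeterson} gives
\begin{equation*}
\Psi_P\bigl(\overline{j}^0_{v\,\pi_P(t_{-r\theta})}\bigr) = q^{\eta_P(-r\theta)}\sigma_v = q^{-r}\sigma_v,
\end{equation*}
and since $\pi_P(t_{-r\theta}) = u^r$ this reads $\Psi_P(\overline{j}^0_{vu^r}) = q^{-r}\sigma_v$, which is exactly \eqref{PetEq}. That $\Psi_P$ is a $\ZZ$-algebra isomorphism onto $QH^*(Gr(\kk,n))[q^{-1}]$ is inherited verbatim from Theorem~\ref{T:ParPeterson}, using that $SL_n(\CC)/P \cong Gr(\kk,n)$ for this choice of $P$ and that $S_n^P$ indexes the Schubert basis of $QH^*(Gr(\kk,n))$ as reviewed in Section~\ref{Sec:QSch}. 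The remaining assertions of Theorem~\ref{mainintro} — that $u$ has hook shape and that classes supporting braid relations lie in $J_P$ — are supplied by the explicit form of $u$ together with Proposition~\ref{braid}, respectively, and need no further argument here.
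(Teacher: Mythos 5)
Your proposal is correct and follows essentially the same route as the paper: Theorem \ref{GrPeterson} is obtained there precisely as a specialization of Theorem \ref{T:ParPeterson}, using Proposition \ref{piP} to identify $u = \pi_P(t_{-\theta})$, Lemma \ref{piPfacts} to write $\pi_P(t_\mu) = u^b$ for any $\mu \in Q^-$ (with $-b$ the coefficient of $\alpha_\kk$ in $\mu$) and thereby reduce the localization to the single class $\overline{j}^0_u$, and the collapse of $q^{\eta_P(\lambda)}$ to a power of the single parameter $q$. The step you flag as the most delicate --- matching the multiplicative set $\{\overline{j}^0_{\pi_P(t_\mu)} \mid \mu \in Q^-\}$ with the one generated by $\overline{j}^0_u$ --- is treated just as briefly in the paper, which simply observes that $\overline{j}^0_{\pi_P(t_{-\theta})}$ generates the set around which one localizes on the affine side.
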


Comparing the general statement in Thereom \ref{T:ParPeterson}, there we had localized around the set of classes corresponding to all antidominant translations.  By contrast, here we need only invert the single class $\overline{j}_{\pi_P(t_{-\theta})}^0$ since, for any antidominant $\lambda \in Q^-$, to compute $\pi_P(t_\lambda)$ we can start by
adding a vector  in $\alpha \in Q_P$ such that $\lambda + \alpha = - b\theta$, where $-b$ is exactly
the coefficient of $\alpha_\kk$ in the expansion of $\lambda$ in terms of the basis of simple roots. Then by Lemma~\ref{piPfacts}, we have that 
$\pi_P(t_\lambda) = \pi_P(t_{-b\theta}) = \pi_P(t_{-\theta})^b$, and so $\overline{j}_{\pi_P(t_{-\theta})}^0$ generates the set around which we localize on the affine side.

\begin{ex}\label{PetEx}
We illustrate how to use Theorem \ref{GrPeterson} and the related results of this section to produce the multiplication table for $QH^*(Gr(1,3))$. 

Of course, in this small example we can easily compute the three interesting products in this multiplication table using the quantum Pieri rule from Theorem \ref{qPieri}:
\begin{equation}\label{QHGr(13)}
\sigma_{\tiny \tableau[Ys]{ \ }}*\sigma_{\tiny \tableau[Ys]{ \ }} = \sigma_{\tiny \tableau[Ys]{ & \ }}
\qquad\qquad \sigma_{\tiny \tableau[Ys]{ \ }}*\sigma_{\tiny \tableau[Ys]{ & \ }} = q
\qquad\qquad \sigma_{\tiny \tableau[Ys]{& \ }}*\sigma_{\tiny \tableau[Ys]{ & \ }} = q \sigma_{\tiny \tableau[Ys]{\ }}.
\end{equation}
We now recreate these three quantum products using the parabolic Peterson isomorphism from Theorem \ref{GrPeterson}.  

First note that since ${\bf i} = (1)$ in the case of $Gr(1,3)$, then here $I_P = \{1, 2\} \backslash \{1\} = \{2\}$.  The elements of $S_3^P$ are those which are forced to end with an element of  $\langle s_1\rangle $ on the right, corresponding to each of the three Schubert classes in $QH^*(Gr(1,3))$:
\begin{equation}
\sigma_1 = \sigma_\emptyset \qquad\qquad \sigma_{s_1} = \sigma_{\tiny \tableau[Ys]{ \ }} \qquad\qquad  \sigma_{s_{21}} = \sigma_{\tiny \tableau[Ys]{ & \ }} 
\end{equation}

Now we turn to the map $\pi_P$.  From
Proposition~\ref{piP}, we have directly that $\pi_P(t_{-\theta})= u = s_{120}$. Using Equation~\eqref{PetEq} we can note the
following mappings 
\begin{equation}
\begin{aligned}
 \overline{j}^0_{u} & = \overline{j}^0_{s_{120}}  \longmapsto q^{-1}\sigma_\emptyset 
 \\
\overline{j}^0_{s_1 u} &= \overline{j}^0_{s_1 \cdot s_{120}}  = \overline{j}^0_{s_{20}}  \longmapsto q^{-1}\sigma_{\tiny \tableau[Ys]{ \ }}
 \\
\overline{j}^0_{s_{21} u}  & = \overline{j}^0_{s_{21} \cdot s_{120}} = \overline{j}^0_{s_0}  \longmapsto q^{-1}\sigma_{\tiny \tableau[Ys]{& \ }}.
\end{aligned}
\end{equation}
  (We remark that, in general, the element $vu^r \in \tilde S_n^0$ will be a minimal length coset representative in $\tilde S_n^0$ provided that $r \geq \text{diag}_m(\lambda(v))$.  Here, a single copy of $\pi_P(t_{-\theta})$ suffices, but already in $Gr(2,4)$ there are elements $v \in S_4^P$ which require the use of $u^2 = \pi_P(t_{-2\theta})$ in order to achieve $vu^2 \in \tilde S_4^0$.)
  
Returning to our example, the three quantum products in Equation \eqref{QHGr(13)} correspond via Theorem \ref{GrPeterson} to the following three products in the $j$-basis, respectively:
\begin{equation}
\begin{aligned}
 \overline{j}_{s_{20}}^0 \cdot \overline{j}_{s_{20}}^0 &\longmapsto (q^{-1}\sigma_{\tiny \tableau[Ys]{ \ }})*(q^{-1}\sigma_{\tiny \tableau[Ys]{ \ }}) \\
 \overline{j}_{s_{20}}^0 \cdot \overline{j}_{s_0}^0 &\longmapsto (q^{-1}\sigma_{\tiny \tableau[Ys]{ \ }})*(q^{-1}\sigma_{\tiny \tableau[Ys]{ & \ }}) \\
 \overline{j}_{s_{0}}^0 \cdot \overline{j}_{s_0}^0 &\longmapsto (q^{-1} \sigma_{\tiny \tableau[Ys]{& \ }})*(q^{-1}\sigma_{\tiny \tableau[Ys]{ & \ }}). 
 \end{aligned}
\end{equation}

Here, we are in the fortunate situation in which $j^0_{s_0} = {\bf \tilde h_1}$ and $j^0_{s_{20}} = {\bf \tilde e_2},$ and so we can use Definition \ref{hDef} to express these classes in terms of the basis $\{ A_w \mid w \in \tilde S_3\}$. We multiply directly in $\mathbb B$ to obtain:
\begin{equation}\label{jmultEx}
\begin{aligned}
 j_{s_{20}}^0 \cdot j_{s_{20}}^0 &= (\textcolor{red}{A_{20}}+A_{12} + A_{01} )^2 
= A_{2012}+ A_{1201} + \textcolor{red}{A_{0120}}  =  j_{s_{0120}}^0  \\
 j_{s_{20}}^0 \cdot j_{s_0}^0 &= (\textcolor{red}{A_{20}} + A_{12} + A_{01})(\textcolor{red}{A_{0}} + A_1 + A_2) \\&= A_{201} + A_{202}+ \textcolor{red}{A_{120}} + A_{121} + A_{010}  + A_{012}  = j_{s_{120}}^0 
\\
 j_{s_{0}}^0 \cdot j_{s_0}^0 &= (A_{0} + A_1 + A_2)^2 =
A_{01} + A_{02} + \textcolor{red}{A_{10}}+ A_{12} + \textcolor{red}{A_{20}} + A_{21}  = j_{s_{10}}^0 + j_{s_{20}}^0,
\end{aligned}
\end{equation}
where we have re-expressed the products in the $j$-basis using Theorem \ref{T:jBasisExpansion}, which says that the term $A_w$ for $w \in \tilde S_3^0$ (colored in red in the examples above) appears exactly once in the expansion of $j_w^0$ and not in any other $j_v^0$.

Next, recall from Lemma \ref{small} that we can easily verify whether an element $w \in \tilde S_3$ lies in $\tilde S_3^P$ by checking whether or not the two positive affine roots $\alpha_2$ and $-\alpha_2 + \delta =\alpha_0+ \alpha_1$ are inversions of $w$. In practice, as we see from Equation \eqref{jmultEx}, only finitely many elements of $\tilde S_3^P \cap \tilde S_3^0$ are required to recover all of the products in $QH^*(Gr(m,n))$, and these elements can always be chosen to be among the shortest in length. As seen in Figure \ref{Fig:J_PEx}, we have that
\begin{equation}
\{s_0,s_{20},s_{120},s_{0120},s_{20120},s_{120120}\} \subset \tilde S_3^P \cap \tilde S_3^0
\end{equation}
are the six shortest words in $\tilde S_3^P \cap \tilde S_3^0$, and that all other words in $\tilde S_3^0$ of length six or less lie outside of $\tilde S_3^P$.  The alcoves corresponding to these six words are illustrated in Figure \ref{Fig:J_PEx} at the beginning of the section.  Note that  $s_{10} \notin \tilde S_3^P$, which means that $ j^0_{s_{10}} \equiv 0 \mod J_P$.  The other three elements of the $j$-basis which appear in the products from \eqref{jmultEx} lie outside of $J_P$ and thus survive the quotient.    

We can now map our products in \eqref{jmultEx} via Theorem \ref{GrPeterson} to $QH^*(Gr(1,3))$ as follows:
\begin{equation}\label{jmultFinish}
\begin{aligned}
q^{-1} \sigma_{\tiny \tableau[Ys]{\ }} *q^{-1} \sigma_{\tiny \tableau[Ys]{\ }}  & \quad  \longleftrightarrow \quad  \overline{j}_{s_{20}}^0 \cdot \overline{j}_{s_{20}}^0 =   \overline{j}_{s_{0120}}^0 = \overline{j}^0_{s_{21} \cdot s_{120120}} = \overline{j}^0_{s_{21} \pi_P(t_{-2\theta})} \longmapsto  q^{-2} \sigma_{\tiny \tableau[Ys]{& \ }}  \\
q^{-1} \sigma_{\tiny \tableau[Ys]{\ }} *q^{-1} \sigma_{\tiny \tableau[Ys]{& \ }}  & \quad \longleftrightarrow \quad  \overline{j}_{s_{20}}^0 \cdot\overline{j}_{s_0}^0  = \overline{j}_{s_{120}}^0 \longmapsto q^{-1} \sigma_\emptyset
\\
q^{-1} \sigma_{\tiny \tableau[Ys]{& \ }} *q^{-1} \sigma_{\tiny \tableau[Ys]{& \ }}  & \quad \longleftrightarrow \quad  \overline{j}_{s_{0}}^0 \cdot \overline{j}_{s_0}^0 = \overline{j}_{s_{10}}^0 + \overline{j}_{s_{20}}^0 \longmapsto 0 + q^{-1}\sigma_{\tiny \tableau[Ys]{ \ }}
\end{aligned}
\end{equation}
Finally, multiplying everything through by $q^2$ to clear denominators, we exactly recover the three products in $QH^*(Gr(1,3))$ from \eqref{QHGr(13)}.
\end{ex}


\section{Posnikov's Approach and the Affine nilTemperley-Lieb Algebra}\label{Sec:nTL}

As an application of the parabolic Peterson isomorphism, in this section we recast Postnikov's affine approach to the quantum cohomology of the Grassmannian from \cite{P05} as a corollary of Theorem \ref{GrPeterson}.  Using the fact that a localization of $QH^*(Gr(m,n))$ yields a representation of the affine Lie algebra $\widehat{\mathfrak{sl}}_n$, Postnikov defines an action of the affine nilTemperley-Lieb algebra $\anTLn$ directly on quantum Schubert classes, and then uses this action to derive a quantum Pieri formula, a quantum Giambelli formula, and other classical results about $QH^*(Gr(m,n))$.  Moreover, Postnikov shows that a localized subalgebra of the affine nilTemperley-Lieb algebra generated by the ${\bf \tilde e}^n_r$ and ${\bf \tilde h}^n_r$ is isomorphic to the localization $QH^*(Gr(m,n))[q^{-1}]$, and that the generators ${\bf \tilde e}^n_r$ and ${\bf \tilde h}^n_r$ map to the Schubert classes $\sigma_{(1^r)}$ and $\sigma_{(r)}$, respectively, under this isomorphism.  In this section, we demonstrate that Postnikov's isomorphism is the composition of the parabolic Peterson isomorphism, followed by two duality isomorphisms which correct for the fact that the parabolic Peterson isomorphism itself does not map ${\bf \tilde h}_r \mapsto \sigma_{(r)}$ or ${\bf \tilde e}_r \mapsto \sigma_{(1^r)}$.

\subsection{The affine nilTemperley-Lieb algebra}

The affine nilTemperley-Lieb algebra is the quotient of the affine nilCoxeter algebra  formed by killing all braid relations.   Recalling from Proposition \ref{braid} that the non-commutative $k$-Schur functions indexed by affine symmetric group elements which support braid relations all lie in the ideal $J_P$, it is natural to expect that this quotient of $(\mathbb A_{af})_0$ relates to the quantum cohomology of the Grassmannian.

\begin{Def} The {\bf affine nilTemperley-Lieb algebra} $\anTLn$ is  is an associative $\ZZ$-algebra with $1$ generated by elements $a_i$ for $i\in \{0,1,...,n-1\}$ satisfying the relations:
\begin{equation}
\begin{aligned}
a_i^2&=0  \\
a_i a_j &= a_j a_i	&& \iff (i - j)\operatorname{mod} n \not \in \{-1, 1\} \\
a_i a_{i+1} a_i &= a_{i+1} a_i a_{i+1} = 0 &&\text{ for all $i$ with indices modulo n}.
\end{aligned}
\end{equation}
Note that the affine nilTemperley-Lieb algebra is simply a quotient of the affine nilCoxeter algebra by sending $A_i A_{i+1} A_i \mapsto 0$ for all $i$.  Otherwise, the image of $A_i$ is denoted by $a_i$ in the quotient.
\end{Def}

To distinguish the elements ${\bf \tilde h}_r, {\bf \tilde e}_r \in (\mathbb A_{af})_0$ from their images in this quotient, we denote by ${\bf \tilde h}^n_r$ and ${\bf \tilde e}^n_r$ the images of the non-commutative homogeneous and elementary symmetric functions in the affine nilTemperley-Lieb algebra. These elements satisfy certain relations in $\anTLn$ which mirror those satisfied by the classes $\sigma_{(r)}$ and $\sigma_{(1^r)}$ in $QH^*(Gr(m,n))$.

\begin{lemma}[Lemma 8.1 \cite{P05}]\label{techlem}
The elements ${\bf \tilde e}_r^n$ and ${\bf \tilde h}_r^n$ commute pairwise and satisfy
\begin{equation}\label{eheq}
\left(1 + \sum_{i=1}^{n-1} {\bf \tilde e}_i^n t^i\right)\left(1+\sum_{j=1}^{n-1}{\bf \tilde h}_j^n (-t)^j
\right)
= 1 + \left(\sum_{m=1}^{n-1} (-1)^{n-m} {\bf \tilde e}_m^n {\bf \tilde h}_{n-m}^n \right)t^n
\end{equation}
and ${\bf \tilde e}_i^n \cdot {\bf \tilde h}_j^n = 0$ if $i+j > n$.
\end{lemma}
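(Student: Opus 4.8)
The plan is to prove the three assertions of the lemma one at a time --- pairwise commutativity, the cancellations in degrees $1,\dots,n-1$, and the vanishing ${\bf \tilde e}_i^n{\bf \tilde h}_j^n=0$ for $i+j>n$ --- and then extract the identity~\eqref{eheq} by comparing coefficients of $t^m$. For commutativity I would observe that ${\bf \tilde h}_r$ and ${\bf \tilde e}_r$ both lie in the \emph{commutative} affine Fomin--Stanley algebra $\mathbb{B}$: this is Proposition~\ref{AffFominStanley} for ${\bf \tilde h}_r$, and for ${\bf \tilde e}_r$ it follows from the standard identification ${\bf \tilde e}_r={\bf s}_{(1^r)}^{(k)}$ for $r\le n-1$ (see \cite[Ch.~3]{kschur}; equivalently, ${\bf \tilde e}_r$ is the image of ${\bf \tilde h}_r$ under the operation $A_i\mapsto A_{n-i}$ on $(\mathbb A_\text{af})_0$ coming from $k$-conjugation, cf.\ the remark after Theorem~\ref{corekbounded}, which swaps cyclically increasing and cyclically decreasing words), so ${\bf \tilde e}_r$ too is a non-commutative $k$-Schur function, hence lies in $\mathbb B$ by Proposition~\ref{AffFominStanley}. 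Since $\mathbb{B}$ is commutative, the ${\bf \tilde e}_i$ and ${\bf \tilde h}_j$ commute pairwise in $(\mathbb A_\text{af})_0$, hence their images ${\bf \tilde e}_i^n,{\bf \tilde h}_j^n$ commute in the quotient $\anTLn$.

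For the degree $m$ cancellations with $1\le m\le n-1$, I would transport the statement through the isomorphism $\mathbb{B}\cong\Lambda^{(k)}$ of Proposition~\ref{AffFominStanley}, under which ${\bf \tilde h}_r\mapsto h_r$ and ${\bf \tilde e}_r\mapsto s_{(1^r)}^{(k)}=e_r$ (the last equality holding for $r\le n-1$). The classical identity $\sum_{i+j=m}(-1)^j e_ih_j=0$ holds in $\Lambda$ for all $m\ge1$; for $m\le n-1$ it involves only $h_1,\dots,h_{n-1}$, hence it already holds in $\Lambda^{(k)}=\ZZ[h_1,\dots,h_{n-1}]$. Pulling it back to $\mathbb{B}$ and then projecting to $\anTLn$ gives $\sum_{i+j=m}(-1)^j{\bf \tilde e}_i^n{\bf \tilde h}_j^n=0$ for all $1\le m\le n-1$.

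The heart of the matter is ${\bf \tilde e}_i^n{\bf \tilde h}_j^n=0$ for $i+j>n$, which has no counterpart in $\mathbb{B}$ and is where the braid-killing relations of $\anTLn$ genuinely enter. Writing ${\bf \tilde e}_i^n=\sum_{|S|=i}a_S$ and ${\bf \tilde h}_j^n=\sum_{|T|=j}a_T'$ as sums over proper subsets $S,T\subsetneq\ZZ/n\ZZ$, with $a_S$ the cyclically increasing and $a_T'$ the cyclically decreasing product, I would establish the stronger termwise claim that $a_Sa_T'=0$ in $\anTLn$ whenever $S\cap T\ne\emptyset$; this covers $i+j>n$ because then $|S\cap T|\ge i+j-n\ge1$. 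The argument: fix $x\in S\cap T$ and decompose $S$ and $T$ into maximal runs of cyclically consecutive indices (genuine arcs, since $S,T$ are proper), distinct such runs commuting with one another. Rewrite $a_S$ with every run other than the one through $x$ on the far left, and $a_T'$ with every run other than the one through $x$ on the far right; if those two runs are $\{p,\dots,x,\dots,q\}\subseteq S$ and $\{p',\dots,x,\dots,q'\}\subseteq T$, then $a_Sa_T'$ contains the consecutive factor
\[
a_x\,(a_{x+1}a_{x+2}\cdots a_q)\,(a_{q'}a_{q'-1}\cdots a_{x+1})\,a_x\qquad(\text{indices mod }n)
\]
(with empty run-tails read as $1$). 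A short case check on whether $q<q'$, $q=q'$, or $q>q'$ (including the degenerate cases $q=x$ or $q'=x$), commuting past non-adjacent generators, always exposes inside this factor either a square $a_y^2$ or a braid $a_ya_{y+1}a_y$; both vanish in $\anTLn$. Hence $a_Sa_T'=0$, and summing over $S$ and $T$ yields ${\bf \tilde e}_i^n{\bf \tilde h}_j^n=0$ when $i+j>n$.

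Finally I would assemble the generating function: with ${\bf \tilde e}_r^n={\bf \tilde h}_r^n=0$ for $r\ge n$, the coefficient of $t^m$ in $\big(1+\sum_i{\bf \tilde e}_i^nt^i\big)\big(1+\sum_j{\bf \tilde h}_j^n(-t)^j\big)$ equals $\sum_{i+j=m}(-1)^j{\bf \tilde e}_i^n{\bf \tilde h}_j^n$. This is $1$ for $m=0$, is $0$ for $1\le m\le n-1$ by the second step, is $0$ for $m>n$ by the third step (any surviving summand has $i,j\ge1$ with $i+j>n$), and for $m=n$ reduces --- after dropping the vanishing terms with $i=0$ or $i=n$ --- to $\sum_{m=1}^{n-1}(-1)^{n-m}{\bf \tilde e}_m^n{\bf \tilde h}_{n-m}^n$, which is exactly the right-hand side of~\eqref{eheq}. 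I expect the only genuine obstacle to be the third step, namely the termwise vanishing $a_Sa_T'=0$ and the case analysis behind it; the rest is bookkeeping together with transport of standard symmetric-function identities along the isomorphisms already set up in Section~\ref{Sec:review}.
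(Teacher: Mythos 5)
The paper gives no proof of this statement---it is imported verbatim as Lemma~8.1 of \cite{P05}---so there is no internal argument to compare yours against; your proof is correct and fills that gap. The first two pieces are rightly reduced to facts the paper already relies on: ${\bf \tilde e}_r={\bf s}_{(1^r)}^{(k)}$ lies in the commutative algebra $\mathbb B$ (this is Remark~\ref{hRmk} together with Theorem~\ref{jBasiskSchur}), and the identity $\sum_{i+j=m}(-1)^j e_ih_j=0$ for $1\le m\le n-1$ involves only $h_1,\dots,h_{n-1}$, hence holds in $\Lambda^{(k)}$ and transports through Proposition~\ref{AffFominStanley} and the projection to $\anTLn$. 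The genuinely new content is the termwise vanishing $a_Sa_T'=0$ for $S\cap T\neq\emptyset$, and your reduction is sound: distinct maximal cyclic runs of a proper subset of $\ZZ/n\ZZ$ are separated by at least one missing index, so the run-blocks commute and can be pushed to the outside, isolating the consecutive factor $a_x(a_{x+1}\cdots a_q)(a_{q'}\cdots a_{x+1})a_x$. The advertised case check does close: writing $q=x+r$ and $q'=x+s$ with $0\le r,s\le n-2$, if $r=s$ one exposes $a_q^2$; if $r<s$ (resp.\ $r>s$) then $a_q$ (resp.\ $a_{q'}$) commutes past the intervening non-adjacent generators until it meets $a_{q+1}$ (resp.\ $a_{q'+1}$) and produces the braid $a_qa_{q+1}a_q$ (resp.\ $a_{q'}a_{q'+1}a_{q'}$); the degenerate cases $r=0$ or $s=0$ land on $a_xa_{x+1}a_x$ or $a_x^2$. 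Since $0\le r,s\le n-2$ forces $1\le |r-s|\le n-2$, the difference $q-q'$ is never $\equiv -1 \pmod n$, so no cyclic wrap-around spoils the adjacency bookkeeping. Combined with $|S\cap T|\ge |S|+|T|-n\ge 1$ when $i+j>n$, this kills every term of ${\bf \tilde e}_i^n{\bf \tilde h}_j^n$, and the extraction of the coefficient of $t^n$ (only $1\le i\le n-1$ survives) matches the right-hand side of~\eqref{eheq}. This is a complete proof of a statement the paper delegates entirely to \cite{P05}.
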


Following \cite{P05}, we further define certain products of the elementary and homogeneous functions in the affine nilTemperley-Lieb algebra as follows:
\begin{equation}
{\bf z}_i = {\bf \tilde e}_i^n \cdot {\bf \tilde h}_{n-i}^n.
\end{equation}
Lemma 8.4 in  \cite{P05} shows that the elements ${\bf z}_i$ are in the center of $\anTLn$, permitting the following definition.

\begin{Def}
For any fixed $1\leq m \leq n-1$, we define the algebra
\begin{equation}
\anTLkn = \anTLn[{\bf z}_\kk^{-1}] / \langle {\bf z}_1, ..., {\bf z}_{\kk-1},{\bf z}_{\kk+1},...{\bf z}_{n-1}\rangle, \label{antlkn}
\end{equation}
and then let $\mathbb X \subset \anTLkn$ be the subalgebra generated by the ${\bf \tilde e}_r^n$ and/or ${\bf \tilde h}_r^n$ for all $1 \leq r \leq n-1$.
\end{Def}

\begin{rmk}
We note that in fact $\anTLkn = \anTLn[{\bf z}_\kk^{-1}]$. To see this, one can compute that ${\bf \tilde e}_j^n = {\bf \tilde h}_{n-i}^n = 0$ for $i < \kk$ and $j >\kk$ using only the fact that ${\bf z}_\kk$ is invertible. Therefore, in $\anTLn[{\bf z}_\kk^{-1}]$ we see that ${\bf z}_i = {\bf \tilde e}_i^n \cdot {\bf \tilde h}_{n-i}^n = 0 $ if $i < \kk$ or $i > \kk$, and so modding out by the elements ${\bf z}_i$ for $i \neq m$ in Eq.~\eqref{antlkn} is actually unnecessary.  The algebra $\anTLkn$ can thus be thought of most simply as $\anTLn$ localized around the ideal $\langle {\bf z}_\kk\rangle$.
\end{rmk}

Postnikov proceeds to prove that the subalgebra $\mathbb X$ generated by the ${\bf \tilde h}_r^n$ and/or ${\bf \tilde e}_r^n$  is isomorphic to the quantum cohomology of the Grassmannian, localized around the quantum parameter.

\begin{thm}[Proposition 8.5 \cite{P05}] 
For any fixed $1 \leq m \leq n-1$, the following map is an isomorphism of $\ZZ$-algebras which maps the generators as follows for all $1 \leq r \leq n-1$: \label{PostIso}
\begin{equation}
\begin{aligned}
\phi_{Po} :  \mathbb X \subset \anTLkn&\longrightarrow QH^*(Gr(\kk,n))[q^{-1}]\\
  {\bf \tilde h}_r^n  &\longmapsto \sigma_{(r)}\\
 {\bf \tilde e}_r^n &\longmapsto\sigma_{(1^r)} \\
{\bf z}_m &\longmapsto  q.
\end{aligned}
\end{equation} 
\end{thm}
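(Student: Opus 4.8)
The plan is to prove Theorem~\ref{PostIso} not by Postnikov's original argument but by realizing his map $\phi_{Po}$ as the explicit composition $\phi_{Po}=T\circ\sd\circ\Psi_{P'}\circ\iso^{-1}$ announced in Theorem~\ref{PostIntro}, where $P'$ is the maximal parabolic with $SL_n(\CC)/P'\cong Gr(n-\kk,n)$, $\Psi_{P'}$ is the parabolic Peterson isomorphism of Theorem~\ref{GrPeterson} for that complementary Grassmannian, $\sd$ and $T$ are the strange-duality and transpose isomorphisms of Theorem~\ref{SD} and Section~\ref{Sec:QSch}, and $\iso$ is the ``correction'' isomorphism from a localized quotient of $\mathbb B$ onto $\mathbb X$ induced by killing braid relations. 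Since each of the four factors is a $\ZZ$-algebra isomorphism, the composite is one as well, so Theorem~\ref{PostIso} follows as soon as we verify that the composition sends ${\bf \tilde h}_r^n\mapsto\sigma_{(r)}$, ${\bf \tilde e}_r^n\mapsto\sigma_{(1^r)}$, and ${\bf z}_\kk\mapsto q$, since the ${\bf \tilde h}_r^n$ (or the ${\bf \tilde e}_r^n$) generate $\mathbb X$ and hence determine $\phi_{Po}$.

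First I would construct $\iso$. The algebra $\anTLn$ is the quotient of $(\mathbb A_{\mathrm{af}})_0$ by $A_iA_{i+1}A_i\mapsto 0$, so the quotient map restricts to a homomorphism $\mathbb B\to\anTLn$ whose image is the subalgebra generated by the ${\bf \tilde h}_r^n$. By Proposition~\ref{braid} every $j^0_w$ with $w\in\tilde S_n^0$ supporting a braid relation lies in $J_{P'}$, so this homomorphism factors through $\mathbb B/J_{P'}$; by Remark~\ref{hRmk} the only generators surviving in $\mathbb B/J_{P'}$ are ${\bf \tilde h}_1,\dots,{\bf \tilde h}_{n-\kk}$ and ${\bf \tilde e}_1,\dots,{\bf \tilde e}_\kk$. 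After localizing---inverting $\overline{\bf s}^{(k)}_{\lambda(u')}$ on the affine side, which corresponds to inverting ${\bf z}_\kk$ and, by the remark following Eq.~\eqref{antlkn}, kills exactly the remaining ${\bf \tilde h}^n_r$ and ${\bf \tilde e}^n_r$---one checks that the two kernels coincide and that the map is onto $\mathbb X$; injectivity then follows from the $\ZZ[q^{\pm1}]$-linear independence of the images of the surviving $j$-basis elements (equivalently, a rank comparison with $QH^*(Gr(n-\kk,n))[q^{-1}]$ via $\Psi_{P'}$). The mild bookkeeping about which Grassmannian's parabolic appears is governed by the cyclic symmetry $a_i\mapsto a_{n-i}$ of $\anTLn$, which corresponds to $k$-conjugation on $\mathbb B$ (Theorem~\ref{kconjinv}) and interchanges the $P$- and $P'$-pictures.

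Next I would track the generators. By Proposition~\ref{piP}, $u'=\pi_{P'}(t_{-\theta})=s_{n-\kk,n-\kk+1,\dots,n-1,n-\kk-1,\dots,1,0}$. Using the $n$-core/$k$-bounded dictionary one factors the affine element $w_r$ indexing $\overline{\bf \tilde h}_r$ (whose $k$-bounded partition is the single row $(r)$) as a \emph{group} product $w_r=v_r\,(u')^{c_r}$ with $v_r\in S_n^{P'}$, so that $\Psi_{P'}(\overline{\bf \tilde h}_r)=q^{-c_r}\sigma_{\mu_r}$ with $\mu_r$ the box partition of $v_r$---and, crucially, $\mu_r\ne(r)$ and $c_r\ne 0$ in general, which is precisely why the two corrections are needed. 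Applying $\sd$ gives $q^{c_r}\,q^{-\mathrm{diag}_{\kk}(\mu_r)}\sigma_{\mu_r'}$ (as $\sd$ also inverts $q$), and then $T$ transposes $\mu_r'$. The proof is completed by verifying the two combinatorial identities $c_r=\mathrm{diag}_{\kk}(\mu_r)$ and $\mu_r'=(1^r)$ (so that $(\mu_r')^T=(r)$), together with the analogous pair for the column classes $\overline{\bf \tilde e}_r$, whence $T\circ\sd\circ\Psi_{P'}(\overline{\bf \tilde h}_r)=\sigma_{(r)}$ and $T\circ\sd\circ\Psi_{P'}(\overline{\bf \tilde e}_r)=\sigma_{(1^r)}$; and ${\bf z}_\kk$, which $\iso^{-1}$ identifies with $\overline{\bf s}^{(k)}_{\lambda(u')}$, maps to $q^{-1}$ under $\Psi_{P'}$ and hence to $q$ under the full composite. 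This matches $\phi_{Po}$ on generators.

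The main obstacle is this last step: determining $v_r$ and $c_r$ from the factorization $w_r=v_r(u')^{c_r}$---a genuine group computation rather than a naive ``box plus rim hooks'' statement, since the product is far from length-additive---and then proving the telescoping identity $c_r=\mathrm{diag}_{\kk}(\mu_r)$ together with $\mu_r'=(1^r)$ so that all powers of $q$ introduced by $\Psi_{P'}$ and by $\sd$ cancel. An alternative that sidesteps the explicit shape combinatorics is to check instead that $T\circ\sd\circ\Psi_{P'}\circ\iso^{-1}$ is compatible with the identity~\eqref{eheq} of Lemma~\ref{techlem} and the quantum Pieri rule (Theorem~\ref{qPieri}), and then invoke these as a presentation of $\mathbb X$; but verifying that~\eqref{eheq} survives all three maps is itself nontrivial, so either route requires real work.
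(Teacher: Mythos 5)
Your proposal follows exactly the paper's own route: the paper establishes this statement by factoring $\phi_{Po} = T\circ\sd\circ\Psi_{P}\circ\iso^{-1}$ (with $P$ the complementary parabolic), tracking the generators through hook shapes and strange duality in Theorem~\ref{modpet} and identifying $\ker(p)=J_P[(j^0_{\pi_P(t_{-\theta})})^{-1}]$ in Theorem~\ref{algiso}, just as you outline. The steps you flag as ``real work'' are precisely the content of those two results --- the group computation $w(r)=s^0_{I,r,\kk-1}s^0_{D,n-1,\kk}\,\pi_P(t_{-\theta})$ showing the image is a hook with a single diagonal box (so the powers of $q$ cancel under $\sd$), and the reverse inclusion $J_P[(j^0_{\pi_P(t_{-\theta})})^{-1}]\subseteq\ker(p)$ proved by induction on length --- so your plan is correct and essentially identical to the paper's.
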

\noindent Note that this result appears as Theorem 4.3 in the published version.

\subsection{Relationship to the parabolic Peterson isomorphism}

In the remainder of this section, we aim to show the following isomorphism naturally relating the subalgebra considered by Postnikov to the localized affine Fomin-Stanley algebra
\begin{equation}\label{PostIsoEq}
 (H_*(\mathcal{G}r)/ J_P)[(\overline{j}_{\pi_P(t_{-\theta})}^0)^{-1}] \cong \mathbb X \subset n\widehat {TL_{n-\kk,n}},
 \end{equation}
  where $I_P = I \backslash \{ m\}$.  Our conclusion is that Theorem \ref{PostIso} in \cite{P05} is really a direct consequence of the parabolic Peterson isomorphism.  We make these statements precise in Theorem \ref{algiso} and Corollary \ref{decomp}.

As a first step, we prove that the composition of the parabolic Peterson isomorphism $\Psi_P$, followed by the strange duality involution and the transpose isomorphism, map the generators in the same way as the map $\phi_{Po}$ from Theorem \ref{PostIso}.

\begin{thm} Let $I_P = I \backslash \{\kk\}$. The following composition is an isomorphism which maps the generators as follows for all $1 \leq r \leq n-1$:\label{modpet}
\begin{equation}
\begin{aligned}
T \circ \sd \circ \Psi_P : (H_*(\mathcal{G}r)/J_P)[(\overline{j}^0_{\pi_P(t_{-\theta})})^{-1}] &\longrightarrow QH^*(Gr(n-\kk,n))[q^{-1}]\\
{\bf \tilde h}_r &\longmapsto \sigma_{(r)} \\
{\bf \tilde e}_r &\longmapsto \sigma_{(1^r)}\\
{\bf \tilde z}_{n-m}={\bf \tilde e}_{n-m}\cdot {\bf \tilde h}_m &\longmapsto  q.
\end{aligned}
\end{equation}
\end{thm}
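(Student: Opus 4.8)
The plan is to observe first that $T\circ\sd\circ\Psi_P$ is automatically a $\ZZ$-algebra isomorphism onto $QH^*(Gr(n-\kk,n))[q^{-1}]$: indeed $\Psi_P$ is one by Theorem~\ref{GrPeterson} (and $u=\pi_P(t_{-\theta})$ by Proposition~\ref{piP}, so the domain written here is exactly the domain of $\Psi_P$ in Theorem~\ref{GrPeterson}), $\sd$ is a $\ZZ$-algebra automorphism of $QH^*(Gr(\kk,n))[q^{-1}]$ by Theorem~\ref{SD}, and $T$ is an isomorphism $QH^*(Gr(\kk,n))[q^{-1}]\to QH^*(Gr(n-\kk,n))[q^{-1}]$. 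Thus all the content lies in identifying the images of the generators. Since $T\circ\sd\circ\Psi_P$ is a ring map and ${\bf \tilde z}_{n-\kk}={\bf \tilde e}_{n-\kk}\cdot{\bf \tilde h}_{\kk}$, its image is determined by those of ${\bf \tilde e}_{n-\kk}$ and ${\bf \tilde h}_\kk$. Moreover, for $r>\kk$ Remark~\ref{hRmk} gives ${\bf \tilde h}_r\in J_P$, so $\overline{\bf \tilde h}_r=0$, matching $\sigma_{(r)}=0$ in $QH^*(Gr(n-\kk,n))$ (the row $(r)$ does not fit inside the $(n-\kk)\times\kk$ rectangle); similarly ${\bf \tilde e}_r\in J_P$ for $r>n-\kk$ matches $\sigma_{(1^r)}=0$. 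So it suffices to treat $1\le r\le\kk$ for ${\bf \tilde h}_r$ and $1\le r\le n-\kk$ for ${\bf \tilde e}_r$.

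Because $\sd$ is an involution, it is enough to prove $\Psi_P(\overline{\bf \tilde h}_r)=\sd(\sigma_{(1^r)})$ and $\Psi_P(\overline{\bf \tilde e}_r)=\sd(\sigma_{(r)})$ in $QH^*(Gr(\kk,n))[q^{-1}]$; then applying $\sd$ gives $\sd(\Psi_P(\overline{\bf \tilde h}_r))=\sigma_{(1^r)}$ and $\sd(\Psi_P(\overline{\bf \tilde e}_r))=\sigma_{(r)}$, and applying $T$ gives ${\bf \tilde h}_r\mapsto T(\sigma_{(1^r)})=\sigma_{(r)}$ and ${\bf \tilde e}_r\mapsto T(\sigma_{(r)})=\sigma_{(1^r)}$, as claimed. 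To compute $\Psi_P(\overline{\bf \tilde h}_r)$, recall from Remark~\ref{hRmk} that ${\bf \tilde h}_r=j^0_{w_r}$ with $w_r=s_{r-1,r-2,\dots,1,0}\in\tilde S_n^0$. By Proposition~\ref{piP}, $u^{-1}=s_{0,1,\dots,\kk-1,\,n-1,n-2,\dots,\kk+1,\kk}$; since $w_r$ and the length-$r$ prefix $s_{0,1,\dots,r-1}$ of $u^{-1}$ represent mutually inverse permutations, the product $w_ru^{-1}$ telescopes to the word $v_r:=s_{r,r+1,\dots,\kk-1,\,n-1,n-2,\dots,\kk+1,\kk}$, which one checks is reduced, avoids $s_0$, ends in $s_\kk$, and has one-line notation $[\,1,\dots,r-1,\,r+1,\dots,\kk,\,n\mid r,\,\kk+1,\dots,n-1\,]$ with both blocks increasing; hence $v_r\in S_n^P$. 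Since $w_r=v_ru\in\tilde S_n^0$, Theorem~\ref{GrPeterson} gives $\Psi_P(\overline{\bf \tilde h}_r)=q^{-1}\sigma_{v_r}$.

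It remains to match $q^{-1}\sigma_{v_r}$ against $\sd(\sigma_{(1^r)})$ using Theorem~\ref{SD}. The $01$-word of the column $(1^r)$ in $Gr(\kk,n)$ is $1^{\kk-r}\,0\,1^r\,0^{n-\kk-1}$; reversing its first $\kk$ bits and, separately, its last $n-\kk$ bits yields $1^{r-1}\,0\,1^{\kk-r}\,0^{n-\kk-1}\,1$, which is exactly the $01$-word read off from the one-line notation of $v_r$, namely that of the hook $(n-\kk,1^{\kk-r})$; and $\text{diag}_\kk((1^r))=1$. Hence $\sd(\sigma_{(1^r)})=q^{-1}\sigma_{v_r}=\Psi_P(\overline{\bf \tilde h}_r)$. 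The argument for ${\bf \tilde e}_r$ is entirely parallel: ${\bf \tilde e}_r=j^0_{w'_r}$ with $w'_r=s_{n-r+1,\dots,n-1,0}$, the product $w'_ru^{-1}$ telescopes to $v'_r:=s_{1,\dots,\kk-1,\,n-r,n-r-1,\dots,\kk+1,\kk}\in S_n^P$ with one-line notation $[\,2,\dots,\kk,\,n-r+1\mid 1,\,\kk+1,\dots,n-r,\,n-r+2,\dots,n\,]$, Theorem~\ref{GrPeterson} gives $\Psi_P(\overline{\bf \tilde e}_r)=q^{-1}\sigma_{v'_r}$, and comparing $01$-words with $\sd$ applied to the row $(r)$ (whose $01$-word $1^{\kk-1}\,0^r\,1\,0^{n-\kk-r}$ reverses to $0\,1^{\kk-1}\,0^{n-\kk-r}\,1\,0^{r-1}$, with $\text{diag}_\kk((r))=1$) shows $\Psi_P(\overline{\bf \tilde e}_r)=\sd(\sigma_{(r)})$.

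Finally, since $T\circ\sd\circ\Psi_P$ is a ring homomorphism, ${\bf \tilde z}_{n-\kk}\mapsto\sigma_{(1^{n-\kk})}*\sigma_{(\kk)}$ in $QH^*(Gr(n-\kk,n))$, which I would evaluate with the quantum Pieri rule (Theorem~\ref{qPieri}) for $\sigma_{(\kk)}$ acting on $(1^{n-\kk})$. The classical term vanishes, since in the $(n-\kk)\times\kk$ rectangle one cannot add $\kk$ boxes to the full first column $(1^{n-\kk})$ with no two in a common column (only the $\kk-1$ columns $2,\dots,\kk$ are available); the only quantum contribution adds one box to column $1$ and one to each of columns $2,\dots,\kk$, producing the hook $(\kk,1^{n-\kk})$, which is itself an $n$-rim hook, so removing it leaves $\emptyset$ and the product equals $q$. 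Hence ${\bf \tilde z}_{n-\kk}\mapsto q$. The degenerate cases $\kk=1$, $\kk=n-1$, and $r=\kk$ are covered by the same formulas once empty strings of generators are read as the identity permutation. I expect the only genuinely delicate point to be the bookkeeping in the two telescoping products $w_ru^{-1}$, $w'_ru^{-1}$ and the extraction of the one-line notations and $01$-words; everything else is formal or a one-line verification.
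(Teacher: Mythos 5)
Your proof is correct and follows essentially the same route as the paper: both arguments reduce to the generators, use the factorization $s_{r-1,\dots,1,0}=v_r\cdot\pi_P(t_{-\theta})$ with $v_r=s_{r,\dots,\kk-1}s_{n-1,\dots,\kk}$ to see that $\Psi_P(\overline{\bf \tilde h}_r)=q^{-1}\sigma_{\mu}$ for the hook $\mu=(n-\kk,1^{\kk-r})$, and then track $\mu$ through $\sd$ and $T$. The only substantive divergences are that you handle ${\bf \tilde e}_r$ by a fully explicit parallel telescoping computation, where the paper instead deduces it from the ${\bf \tilde h}_r$ case by expressing $\sigma_{(1^r)}$ polynomially in the $\sigma_{(s)}$ (implicitly invoking quantum Giambelli), and that you evaluate ${\bf \tilde z}_{n-\kk}\mapsto q$ by quantum Pieri in the target rather than identifying ${\bf \tilde e}_{n-\kk}\cdot{\bf \tilde h}_{\kk}$ with $\overline{j}^0_{\pi_P(t_{-\theta})}$ on the affine side; both variants are sound.
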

\noindent Note that this result appears as Theorem 4.7 in the published version.

\begin{proof}
This map is obviously an isomorphism since it combines the parabolic Peterson isomorphism with two duality isomorphisms.  In addition, recall from Definition \ref{noncomkschur} and Theorem \ref{jBasiskSchur} that for every $w \in \tilde S_n^0$, we have $j_{w}^0 = f({\bf \tilde h}_1,{\bf \tilde h}_2,...,{\bf \tilde h}_{n-1})$, and so it is sufficient to check that the generators are  mapped as claimed.

First note that  ${\bf \tilde h}_r = j_{w(r)}^0$, where $w(r) = s_{r-1, r-2, ..., 1, 0}$, which corresponds to a $k$-bounded shape given by a horizontal row of $r$ boxes since $r \leq n-1$. Now recall Lemma \ref{hJP}, which says that ${\bf \tilde h}_r \in \tilde S_n^P$ if and only if $r \leq m$.  Therefore, ${\bf \tilde h}_r \in J_P$ if and only if $m > r \geq n-1$, which means that $\Psi_P: {\bf \tilde h}_r \mapsto 0$ if the horizontal strip $(r)$ does not fit in $\mathcal{P}_{mn}$.  

Now for ${\bf \tilde h}_r$ with $1\le r \le \kk$, recalling the notation of Eq.~\eqref{sDsI}, we can clearly see that
\begin{equation}
w(r) = s_{r-1,...,0}= s^0_{I, r,\kk-1}s_{D,n-1,\kk}^0 s_{I,\kk,n-1}^0s_{D,\kk-1,1}^0s_{0} =
 s^0_{I, r,\kk-1}s_{D,n-1,\kk}^0 \pi_P(t_{-\theta})
\end{equation}
so using the statement of the Peterson isomorphism from Theorem \ref{GrPeterson}, we have
\begin{equation}
\Psi_P: \overline{j}^0_{w(r)} \longmapsto q^{-1}\sigma_\mu,
\end{equation}
where $\mu$ is a hook shape with width $n-\kk$ and height $\kk-r+1$. The main diagonal of $\mu$ has 1 box, the first row has full width in $\mathcal{P}_{n-m,n}$, and the first column is missing $r-1$ boxes. Therefore, when we map next by $\sd$, the shape will have one box in the first row and $r$ boxes in the first column; i.e. $\sd(\mu) = (1^r)$. Summarizing the composition, we have proved that
 \begin{equation}
{{\bf \tilde h}}_r \xmapsto{\Psi_P} q^{-1} \sigma_\mu \xmapsto{\ \sd\ } \sigma_{(1^{r})} \xmapsto{\ T\ } \sigma_{(r)}.
 \end{equation} 
 
Finally, since one can write $\sigma_{(1^r)}$ in terms of products and sums of $\sigma_{(r)}$, and since $s_{\lambda}^{(k)}  = s_\lambda$ if $\lambda \in \mathcal P_{mn}$, we know that ${\bf \tilde e}_r$ will be mapped as claimed. In addition, one can easily check that, modulo the ideal $J_P$, 
  \begin{equation}
  {\bf \tilde z}_{n-m} = {\bf \tilde e}_{n-m} \cdot {\bf \tilde h}_{m} = \overline{j}_{\pi_P(t_{-\theta})} \qquad \text{and} \qquad \sigma_{(1^{n-m})}\cdot \sigma_{(m)} = q
  \end{equation}
in $QH^*(Gr(n-\kk,\kk))$, as desired.
\end{proof}

Note the similarity of the statements in Theorem \ref{PostIso} and Theorem \ref{modpet}.  Our final goal will be to demonstrate that the two algebras in the domains are isomorphic, which shows that Postnikov's isomorphism from the localized affine nilTemperley-Lieb algebra is the exact same map that appears in Theorem \ref{modpet}.  We require one final technical lemma.

\begin{lemma}\label{wred} Let $I_P =I\backslash \{\kk\}$.  Suppose that $j_w^0 \in J_P$, where  $w = w_1 \pi_P(t_{-\theta})$ with $w_1 \in \tilde S_n^0$ and $\len(w) = \len(w_1) + \len(\pi_P(t_{-\theta}))$.
Then, 
\begin{equation}\label{inJP}
\overline{j}_{w_1}^0 \equiv 0 \in (H_*(\mathcal{G}r)/J_P)[(j^0_{\pi_P(t_{-\theta})})^{-1}].
\end{equation}
\end{lemma}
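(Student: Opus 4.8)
The plan is to establish the stronger statement that $w_1\notin\tilde S_n^P$. Once this is known, $j^0_{w_1}$ is one of the generators $\{j^0_v\mid v\in\tilde S_n^0\setminus\tilde S_n^P\}$ of $J_P$ (here we use $w_1\in\tilde S_n^0$ from the hypothesis), so $\overline{j}^0_{w_1}$ vanishes already in $H_*(\mathcal{G}r)/J_P$, hence a fortiori in the localization. The mechanism is a stability property of the single element $u=\pi_P(t_{-\theta})$: right multiplication by $u$ carries $\tilde S_n^P$ into itself.

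The key step is to prove that $u\cdot\beta\in(R_P)^+_{af}$ for every $\beta\in(R_P)^+_{af}$. The computations in the proof of Proposition~\ref{piP} already record that $u\cdot\alpha\in R^+\cup(R^-+\delta)$ for each finite root $\alpha\in R_P^+$; I would revisit those formulas to extract the sharper fact that $\overline{u\cdot\alpha}\in R_P$, i.e.\ that both indices of $\overline{u\cdot\alpha}$ lie in $\{1,\dots,\kk\}$ or both in $\{\kk+1,\dots,n\}$. For $\alpha=\vec e_i-\vec e_j$ with $0<i<j\le\kk$ this is immediate from the case distinction displayed there: one gets $u\cdot\alpha=\vec e_{i-1}-\vec e_{j-1}$ when $i>1$ (indices in $\{1,\dots,\kk-1\}$) and $u\cdot\alpha=\delta-(\vec e_{j-1}-\vec e_{\kk})$ when $i=1$ (indices $j-1$ and $\kk$, both in $\{1,\dots,\kk\}$), and the family with $i\ge\kk+1$ is entirely analogous. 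Because $u\cdot\delta=\delta$, one then writes an arbitrary element of $(R_P)^+_{af}$ as $\alpha+p\delta$ with $\alpha=\overline{\alpha+p\delta}\in R_P$ and bookkeeps the sign of the coefficient of $\delta$; this upgrades the finite-root statement to $u\big((R_P)^+_{af}\big)\subseteq(R_P)^+_{af}$.

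Granting that fact, the lemma follows formally. For any $w'\in\tilde S_n^P$ and any $\beta\in(R_P)^+_{af}$ we have $(w'u)\cdot\beta=w'\cdot(u\cdot\beta)$ with $u\cdot\beta\in(R_P)^+_{af}$, so $w'\cdot(u\cdot\beta)\in R_{af}^+$ by the defining property of $\tilde S_n^P$; therefore $w'u\in\tilde S_n^P$. Taking the contrapositive, since the hypothesis $j^0_w\in J_P$ says exactly that $w=w_1u\notin\tilde S_n^P$, we conclude $w_1\notin\tilde S_n^P$, which is what we wanted. (In fact this argument shows the slightly stronger conclusion that $\overline{j}^0_{w_1}=0$ already in $H_*(\mathcal{G}r)/J_P$, without inverting $j^0_{\pi_P(t_{-\theta})}$.)

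I expect the only real work to be the refinement in the middle paragraph: checking that $\overline{u\cdot\alpha}$ stays within one block for every $\alpha\in R_P^+$, and then promoting this to all affine roots in $(R_P)^+_{af}$. This is a short bookkeeping pass over formulas already present in the proof of Proposition~\ref{piP}; everything else is formal manipulation of the level-zero action.
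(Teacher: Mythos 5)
Your proposal is correct, and it takes a genuinely different route from the paper. The paper never touches the root system here: it expands the product $j_{w_1}^0\cdot j_{\pi_P(t_{-\theta})}^0$ in the $A$-basis via Theorem~\ref{T:jBasisExpansion}, observes that the correction terms are $j^0_{w'\pi_P(t_{-\theta})}$ with $w'\notin\tilde S_n^0$ forced to end in $s_{\kk-1}$ or $s_{\kk+1}$, notes that each such $w'\pi_P(t_{-\theta})$ supports a braid and hence lies in $J_P$ by Proposition~\ref{braid}, concludes that the whole product lies in $J_P$, and then divides by the invertible class $\overline{j}^0_{\pi_P(t_{-\theta})}$. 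Your argument instead isolates the root-theoretic fact $u\cdot(R_P)^+_{af}\subseteq(R_P)^+_{af}$, and that fact does hold; indeed it is even easier than your middle paragraph suggests, since the factorization $u=vt_{-\theta}$ from the proof of Proposition~\ref{piP} has $v=s_{\kk+1,\dots,n-1,\kk-1,\dots,1}\in(S_n)_P$ and $t_{-\theta}$ acts by $\alpha+p\delta\mapsto\alpha+(p+\langle\theta,\alpha\rangle)\delta$, so $\overline{u\cdot\beta}=v\cdot\overline{\beta}\in R_P$ is automatic and the positivity $u\cdot\beta\in R_{af}^+$ is exactly the already-established membership $u\in\tilde S_n^P$. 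Your route buys several things: it is shorter, it never uses the hypothesis $\len(w)=\len(w_1)+\len(\pi_P(t_{-\theta}))$ or Proposition~\ref{braid}, and it proves the stronger conclusion that $\overline{j}^0_{w_1}$ already vanishes in $H_*(\mathcal{G}r)/J_P$ before localizing. What the paper's longer computation buys is the explicit expansion \eqref{jProduct}, which is reused verbatim in the third case of the proof of Theorem~\ref{algiso}; if you adopt your proof of the lemma, that expansion would still have to be derived there.
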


\begin{proof}
 Recall by Theorem \ref{T:jBasisExpansion} that
$j_v^0 = A_v + \sum\limits_{u \not \in \tilde S_n^0, \ \len(u)=\len(v)}c_{v}^u A_{u}$ for some $c_{v}^u \in \ZZ.$ 
Thus, 
\begin{align}
 j_{w_1}^0 \cdot j_{\pi_P(t_{-\theta})}^0 &=
\left( A_{w_1} + \sum_{\scriptsize\begin{array}{c}w' \not \in \tilde S_n^0 \\ \len(w') = \len(w_1) \end{array}}c_{w_1}^{w'} A_{w'}\right)
\left( A_{\pi_P(t_{-\theta})} + \sum_{\scriptsize\begin{array}{c}v' \not \in \tilde S_n^0 \\\len(v') = \len(\pi_P(t_{-\theta})) \end{array}}
c_{\pi_P(t_{-\theta})}^{v'} A_{v'}\right) \nonumber \\
&= A_{w_1 \pi_P(t_{-\theta})} + \sum_{w'} c_{w_1}^{w'}A_{w'\pi_P(t_{-\theta})} + \sum_{u \notin \tilde S_n^0} d_{u,w}A_u \nonumber \\
&= j_{w_1 \pi_P(t_{-\theta})}^0 + \sum_{\substack{w'\pi_P(t_{-\theta}) \in \tilde S_n^0\\ A_{w'\pi_P(t_{-\theta})}\ne 0}}c_{w_1}^{w'} j_{w'\pi_P(t_{-\theta})}^0, \label{jProduct}
\end{align}
where we use Theorem \ref{T:jBasisExpansion} in the last step to say that whenever $v \in \tilde S_n^0$, the factor $A_v$ occurs exactly once in the expansion of $j_v^0$ and not in any other element in the $j$-basis.  

Now, since $w' \notin \tilde S_n^0$, then each $w'$ appearing in the sum must end in a letter
$s_i$ with $i\ne 0$. Note that, for $i < \kk-1$, we have by Proposition \ref{piP} that
\begin{equation}
\begin{aligned}
s_i \pi_P(t_{-\theta}) &= s_i s_{\kk,\kk+1,...,n-1,\kk-1,...,1,0} \\
& = s_{\kk,\kk+1,...,n-1,\kk-1,...,i,i+1,i,...,1,0} \\
&= s_{\kk,\kk+1,...,n-1,\kk-1,...,i+1,i,i+1,...,1,0} \\
& = s_{\kk,\kk+1,...,n-1,\kk-1,...,i+1,i,...,1,0,i+1} \not\in \tilde S_n^0,
\end{aligned}
\end{equation}
which shows that $w'$ cannot end in $s_i$ with $0 < i < \kk-1$. A similar calculation shows $w'$ cannot
end in $i > \kk+1$. If $i = \kk$, then $A_{w' \pi_P(t_{-\theta})} = 0$ since we have two adjacent generators $A_m$.  Therefore, the only remaining possibilities are that $w'$ ends in either $s_{\kk-1}$ or $s_{\kk+1}$.  However, in either of these two cases, the element $w' \pi_P(t_{-\theta}))$ supports a braid relation.  Therefore, by Proposition~\ref{braid} and our hypotheses, 
 the sum in Eq.~\eqref{jProduct} lies in $J_P$. Finally, since $\overline{j}_{\pi_P(t_{-\theta})}^0$ is invertible, our claim follows.
\end{proof}

\begin{thm} \label{algiso}
Let $I_P = I\backslash\{\kk\}$. There is a $\ZZ$-algebra isomorphism
\begin{equation}\label{iota}
\iso: (H_*(\mathcal{G}r)/ J_P)[(\overline{j}_{\pi_P(t_{-\theta})}^0)^{-1}] \cong \mathbb X \subset n\widehat {TL_{n-\kk,n}},
\end{equation}
 where $\mathbb X$ is the subalgebra of the affine nilTemperley-Lieb algebra defined in Theorem~\ref{PostIso}. 
\end{thm}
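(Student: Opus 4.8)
The plan is to realize $\iso$ as the map induced by the canonical surjection of algebras $\varpi\colon(\mathbb A_\text{af})_0\twoheadrightarrow\anTLn$, $A_i\mapsto a_i$, restricted to the affine Fomin--Stanley subalgebra $\mathbb B=H_*(\mathcal{G}r)$. Composing $\varpi|_{\mathbb B}$ with the localization $\anTLn\to\anTLn[{\bf z}_{n-\kk}^{-1}]=n\widehat{TL_{n-\kk,n}}$ gives a $\ZZ$-algebra map $\Theta\colon\mathbb B\to n\widehat{TL_{n-\kk,n}}$ with $\Theta({\bf \tilde h}_r)={\bf \tilde h}^n_r$ for all $r$. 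Since $\mathbb B$ is generated by the ${\bf \tilde h}_r$, the image of $\Theta$ is the subalgebra generated by ${\bf \tilde h}^n_1,\dots,{\bf \tilde h}^n_{n-1}$; once $\Theta$ is seen to extend past $J_P$ its image also acquires ${\bf z}_{n-\kk}^{-1}$, and by Lemma~\ref{techlem} (which expresses each ${\bf \tilde e}^n_r$, $1\le r\le n-1$, as a polynomial in ${\bf \tilde h}^n_1,\dots,{\bf \tilde h}^n_{n-1}$) this subalgebra, together with ${\bf z}_{n-\kk}^{-1}$, is precisely $\mathbb X$.

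The first real step is to show $\Theta$ kills $J_P$, which I would do by identifying $J_P$ explicitly. In $n\widehat{TL_{n-\kk,n}}$ one has ${\bf \tilde h}^n_r=0$ for $r>\kk$ and ${\bf \tilde e}^n_r=0$ for $r>n-\kk$ (this is the remark following the definition of $\anTLkn$, read with $n-\kk$ in the role of $\kk$), so $\Theta$ annihilates each ${\bf \tilde h}_r$ with $r>\kk$ and each ${\bf \tilde e}_r$ with $r>n-\kk$; let $\mathcal J\subseteq\mathbb B$ be the ideal they generate, so $\mathcal J\subseteq\ker\Theta$ and, by Remark~\ref{hRmk}, $\mathcal J\subseteq J_P$. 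Now $\mathbb B\cong\Lambda^{(k)}$ is a polynomial ring on the ${\bf \tilde h}_r$, whence $\mathbb B/\mathcal J\cong\ZZ[{\bf \tilde h}_1,\dots,{\bf \tilde h}_\kk]/(\overline{{\bf \tilde e}}_{n-\kk+1},\dots,\overline{{\bf \tilde e}}_{n-1})$, which is exactly the classical Siebert--Tian presentation of $QH^*(Gr(n-\kk,n))$ under ${\bf \tilde h}_r\leftrightarrow\sigma_{(r)}$, with the quantum parameter identified up to sign with $\overline{{\bf \tilde e}}_{n-\kk}\overline{{\bf \tilde h}}_\kk=\overline{j}^0_{\pi_P(t_{-\theta})}$. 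In particular $\mathbb B/\mathcal J$ is torsion-free with respect to $s:={\bf \tilde e}_{n-\kk}{\bf \tilde h}_\kk$, so $\mathcal J$ is $s$-saturated, and $(\mathbb B/\mathcal J)[s^{-1}]\cong QH^*(Gr(n-\kk,n))[q^{-1}]$. On the other hand Theorem~\ref{modpet} gives $(\mathbb B/J_P)[(\overline{j}^0_{\pi_P(t_{-\theta})})^{-1}]\cong QH^*(Gr(n-\kk,n))[q^{-1}]$, also sending $\overline{{\bf \tilde h}}_r\mapsto\sigma_{(r)}$. The quotient map $(\mathbb B/\mathcal J)[s^{-1}]\twoheadrightarrow(\mathbb B/J_P)[(\overline{j}^0_{\pi_P(t_{-\theta})})^{-1}]$ intertwines these two identifications on the generators $\sigma_{(r)}$, hence is the identity and therefore an isomorphism; so $\mathcal J$ and $J_P$ have the same $s$-saturation, and since $\mathcal J$ is already $s$-saturated with $\mathcal J\subseteq J_P$ we conclude $\mathcal J=J_P$. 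Thus $\Theta$ kills $J_P$, descends to $\mathbb B/J_P$, and—${\bf z}_{n-\kk}$ being a unit downstairs—extends uniquely to $\iso\colon(\mathbb B/J_P)[(\overline{j}^0_{\pi_P(t_{-\theta})})^{-1}]\to n\widehat{TL_{n-\kk,n}}$.

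It remains to check $\iso$ is injective. For this I would use the triangularity of Theorem~\ref{T:jBasisExpansion} against the $\ZZ$-basis $\{a_w:w\text{ fully commutative}\}$ of $\anTLn$: for $w\in\tilde S_n^0$ the word $\varpi(j^0_w)=a_w+\sum_{v\notin\tilde S_n^0,\ \len(v)=\len(w)}c^v_w\,a_v$ has leading term $a_w$ and all other terms indexed by $v\notin\tilde S_n^0$ (with $a_w=0$ when $w$ is not fully commutative). Hence, using that the ${\bf z}_i$ are non-zero-divisors in $\anTLn$ (cf.\ \cite{P05}) so that $\anTLn$ embeds in its localization and the $a_w$ remain linearly independent there, for any $x=\sum_{w\in\tilde S_n^0}c_w j^0_w\in\ker\Theta$ the coefficient of $a_w$ in $\Theta(x)$ equals $c_w$ for every fully commutative $w\in\tilde S_n^0$; as $\Theta(x)=0$, all such $c_w$ vanish, and the remaining $j^0_w$ have $w$ supporting a braid relation, so lie in $J_P$ by Proposition~\ref{braid}. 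Thus $\ker\Theta\subseteq J_P=\mathcal J\subseteq\ker\Theta$, i.e.\ $\ker\Theta=J_P$; so $\mathbb B/J_P$ embeds in $n\widehat{TL_{n-\kk,n}}$, and localizing at the now invertible image of $\overline{j}^0_{\pi_P(t_{-\theta})}$ keeps the map injective, so $\iso$ is a $\ZZ$-algebra isomorphism onto $\mathbb X$. The step I expect to be the main obstacle is the identification $\mathcal J=J_P$—pinning down exactly which $j$-basis elements fail to survive the parabolic quotient and relating them to the classical defining relations of $QH^*(Gr(n-\kk,n))$; Lemma~\ref{wred} is the hands-on tool for this, since it shows directly that an ``unshifted'' class $\overline{j}^0_{w_1}$ already vanishes in the domain of $\iso$ whenever the corresponding shifted class $j^0_{w_1\pi_P(t_{-\theta})}$ lies in $J_P$.
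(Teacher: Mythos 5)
Your overall skeleton matches the paper's: both realizations of $\iso$ come from the natural projection $(\mathbb A_\text{af})_0\twoheadrightarrow\anTLn$ restricted to $\mathbb B$, and both reduce the theorem to showing that the kernel of the induced map is exactly the localization of $J_P$. For the hard inclusion $J_P\subseteq\ker$, however, you take a genuinely different route: the paper runs a minimal-length induction on reduced words (splitting on the rightmost occurrence of $s_\kk$ and invoking Lemmas \ref{hdecomp}, \ref{hJP}, \ref{wred} together with $k$-conjugation), whereas you identify $J_P$ with the explicitly generated ideal $\mathcal J=({\bf \tilde h}_{\kk+1},\dots,{\bf \tilde h}_{n-1},{\bf \tilde e}_{n-\kk+1},\dots,{\bf \tilde e}_{n-1})$ by playing the Siebert--Tian presentation of $QH^*(Gr(n-\kk,n))$ off against Theorem \ref{modpet} and using $s$-saturation. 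That argument is sound (Theorem \ref{modpet} is proved before and independently of Theorem \ref{algiso}, so there is no circularity), and it yields the attractive byproduct $J_P=\mathcal J$; its cost is importing the classical presentation of $QH^*(Gr(n-\kk,n))$ as an external input, which the paper's self-contained combinatorial argument deliberately avoids.

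The genuine gap is in your injectivity step. The elements ${\bf z}_i$ are \emph{not} non-zero-divisors in $\anTLn$: since the ${\bf \tilde e}^n_i$ and ${\bf \tilde h}^n_j$ commute pairwise and ${\bf \tilde e}^n_i\cdot{\bf \tilde h}^n_j=0$ for $i+j>n$ (Lemma \ref{techlem}), one has ${\bf \tilde e}^n_j\cdot{\bf z}_{n-\kk}={\bf \tilde e}^n_{n-\kk}\bigl({\bf \tilde e}^n_j{\bf \tilde h}^n_{\kk}\bigr)=0$ for every $j>n-\kk$, while ${\bf \tilde e}^n_j\neq 0$ in $\anTLn$; this is exactly how the remark following the definition of $\anTLkn$ deduces that these generators die in the localization. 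Consequently $\anTLn$ does not embed into $n\widehat{TL_{n-\kk,n}}$, the $a_w$ do not remain linearly independent there, and you cannot read off the coefficients $c_w$ from the equation $\Theta(x)=0$. The repair is cheap given your first part: run the leading-term argument against the fully commutative basis for the \emph{unlocalized} projection $\varpi|_{\mathbb B}:\mathbb B\to\anTLn$ to conclude $\ker(\varpi|_{\mathbb B})\subseteq J_P$; then $x\in\ker\Theta$ means ${\bf z}_{n-\kk}^{N}\,\varpi(x)=0$ for some $N$, i.e.\ $s^{N}x\in\ker(\varpi|_{\mathbb B})\subseteq J_P$ with $s={\bf \tilde e}_{n-\kk}{\bf \tilde h}_{\kk}$, and the $s$-saturation of $J_P=\mathcal J$ that you already established gives $x\in J_P$. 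With that substitution your proof goes through.
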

\noindent Note that this result appears as Theorem 4.9 in the published version.

Of course, the existence of this isomorphism follows immediately because it can be expressed using Theorem \ref{PostIso} and Theorem \ref{modpet} as the following composition of isomorphisms
\begin{equation}
\iso = \phi_{Po}^{-1} \circ T \circ \sd \circ \Psi_P.
\end{equation}
However, our goal is to prove that this isomorphism arises independently of Postnikov's work, so we provide a proof which does not use on the map $\phi_{Po}$, but rather only the parabolic Peterson isomorphism.  Indeed, the natural projection $\mathbb B \longrightarrow \mathbb B/J_P$ induces an isomorphism between the localization of the affine Fomin-Stanley algebra and the subalgebra $\mathbb X$ of the affine nilTemperley-Lieb algebra considered by Postnikov.

\begin{proof}[Proof of Theorem \ref{algiso}]
Consider the natural surjection
\begin{equation}
p : H_*(\mathcal{G}r)
[({j}_{\pi_P(t_{-\theta})}^0)^{-1}] \longrightarrow n\widehat {TL_{n-\kk,n}}\qquad \text{by}\qquad {\bf \tilde h}_r \longmapsto {\bf \tilde h}_r^n.
\end{equation}
By a standard localization theorem, we have
\begin{equation}
(H_*(\mathcal{G}r)/  J_P)[(\overline{j}_{\pi_P(t_{-\theta})}^0)^{-1}] \cong H_*(\mathcal{G}r)[(j_{\pi_P(t_{-\theta})}^0)^{-1}]/J_P[(j_{\pi_P(t_{-\theta})}^0)^{-1}].
\end{equation}
 To prove the isomorphism in Equation \eqref{iota}, it thus suffices to show that 
 \begin{equation}
 \ker(p) =  J_P[(j_{\pi_P(t_{-\theta})}^0)^{-1}].
 \end{equation}

Since $p$ is the natural surjection, we know that $\ker(p)$ is the localization of the ideal that contains all elements $a$ satisfying the condition that for every monomial $A_w$ occurring in $a$, the element $w$ necessarily supports a braid relation by the definition of $n\widehat {TL_{n-\kk,n}}$. Since the element of the $j$-basis indexed by $w \in \tilde S_n^0$ at least contains the term $A_w$, then all the $j$-basis terms that appear in the expansion of $a$ support braids and are in $J_P$ by Proposition~\ref{braid}. Therefore, $\ker(p) \subseteq J_P[(j_{\pi_P(t_{-\theta})}^0)^{-1}]$ since localization respects inclusion.

Conversely, suppose that $j_w^0 \in J_P[(j_{\pi_P(t_{-\theta})}^0)^{-1}]$, and suppose by contradiction that $j_w^0 \not \in \ker(p)$. Further suppose that $\len(w)$ is the minimum length of any word satisfying this condition. By Lemma~\ref{nosk}, we know that $s_\kk$ appears in every reduced expression for $w$. Pick the rightmost instance of $s_\kk$ among all reduced expressions for $w$. There are clearly three cases: $w = w_1 s_{\kk,\kk-1,...,1,0}$ or $w = w_1 s_{\kk,\kk+1,...,n-1,0}$ or $w=w_1 s_{\kk,\kk+1,...,n-1,\kk-1,\kk-2,...,1,0}$ where in each case $\len(w) = \len(w_1) + \len(s_{\kk,...,0})$. 

Consider the first case $w = w_1 s_{\kk,\kk-1,...,1,0}$.  We first argue using the bijections discussed in Section \ref{Sec:AffSchClasses} that $w$ corresponds to a $\kconj$-bounded partition with at least $\kk+1$ boxes in the first row.  Let the $n$-core associated to $w$ be denoted $\mu=(\mu_1,...,\mu_p)$. Consider the $\kk+1$ rightmost boxes in the first row, starting in column $i=\mu_1 - \kk$. In order for one of these boxes to be removed when we biject to the $\kconj$-bounded partitions, the $i^{\text{th}}$ column must have $n-\kk+1$ boxes.  Thus, after inserting the first $\kk+1$ boxes, we must have inserted $i$ boxes in the $(n-\kk)^{\text{th}}$ row. The grid-numbers of this row begin with $\kk+1$, so inserting $i$ boxes into row $n-m$ also inserts $i$ boxes in the first row since there are shared labels. This means the first row has $i+\kk+1=\mu_1 +1$ boxes, which is a contradiction.  Therefore, $\lambda(w)$, the corresponding $k$-bounded partition, has at least $m+1$ boxes in the first row.  Now, by Lemma~\ref{hdecomp} and Definition \ref{noncomkschur}, we know that for some $c_{ij} \in \ZZ$, 
\begin{equation}\label{normal}
j_w^0 = {\bf s}_{\lambda(w)}^{(k)} = {\bf \tilde h}_{\lambda_1} {\bf s}_{(\lambda_2,...,\lambda_p)}^{(k)} + \sum_{i=1}^{n-1-\lambda_1} {\bf \tilde h}_{i+\lambda_1} \cdot \left(\sum_j c_{ij} {\bf s}_{\mu_{ij}}^{(k)}\right).
\end{equation}
By Lemma \ref{hJP}, we know that ${\bf \tilde h}_r \in J_P$ for $r > m$.  Since $p({\bf \tilde h}_r) = {\bf \tilde h}_r^n \equiv 0$ if $r > \kk$, then we also have that ${\bf \tilde h}_r \in \ker(p)$.  Since we have shown that $\lambda_1 > \kk$, we thus have $j_w^0 \in \ker(p)$.

Similarly, if $w =w_1 s_{\kk,\kk+1,...,n-1,0}$, then the $\kconj$-conjugate partition corresponds to the word
$w^{\omega_{\kconj}} = w_1^{\omega_{\kconj}} s_{n-\kk,n-\kk-1,...,1,0}$, since the $\kconj$-conjugate partition is
the transpose of the $n$-core, meaning that we reflect which diagonal we add the boxes to, as discussed in Section \ref{Sec:AffSchClasses}. Therefore, we can write
\begin{equation}\label{kconjcalc}
j_{w^{\omega_{\kconj}}}^0 = {\bf s}_{\lambda^{\omega_{\kconj}}}^{(k)} = {\bf \tilde h}_{\lambda_1^{\omega_{\kconj}}} {\bf s}_{(\lambda_2^{\omega_{\kconj}},...,\lambda_p^{\omega_{\kconj}})}^{(k)} + \sum_{i=1}^{n-1-\lambda_1^{\omega_{\kconj}}} {\bf \tilde h}_{i+\lambda_1^{\omega_{\kconj}}} \cdot \left(\sum_j c_{ij} {\bf s}_{\mu_{ij}^{\omega_{\kconj}}}^{(k)}\right)
\end{equation}
using the same $c_{ij}\in \ZZ$ as above.
 Recalling from Theorem \ref{kconjinv} that $\kconj$-conjugation is an automorphism on $H_*(\mathcal{G}r)$, we now $\kconj$-conjugate both sides of Equation \eqref{kconjcalc}, which expresses $j_w^0$ as a sum of terms ${\bf \tilde e}_i \cdot f_i$ for some elements $f_i \in \mathbb B$, where we know that $i > n-\kk$, since $\lambda_1^{\omega_{\kconj}} > n-\kk$.
By Lemma~\ref{hJP}, we know that ${\bf \tilde e}_r \in J_P$ for $r > n-m$, and since $p({\bf \tilde e}_r) = {\bf \tilde e}_r^n \equiv 0$ if $r > n-\kk$, then ${\bf \tilde e}_r \in \ker(p)$.  Since we have shown that $\lambda_1^{\omega_{\kconj}}  > n-\kk$, 
 we thus have that $j_w^0 \in \ker(p)$ in this case as well.

Finally, in the third case, we have $w = w_1 s_{\kk,\kk+1,...,n-1,\kk-1,...,1,0} = w_1 \pi_P(t_{-\theta})$ by Proposition \ref{piP}.  Therefore, by similar arguments to those made in the proof of Lemma~\ref{wred}, we see that $w_1$ must end in $s_0$, $s_{\kk-1}$, or $s_{\kk+1}$. In the latter case, we have that $w$ ends with
\[
s_{\kk+1} s_{\kk,\kk+1,...,n-1,\kk-1,...,1,0} = s_{\kk,\kk+1,\kk,\kk+2,...,n-1,\kk-1,...,1,0} = s_{\kk,\kk+1,\kk+2,...,n-1,\kk,\kk-1,...,1,0},
\]
which reduces to a previous case. This similarly holds if $w_1$ ends in $s_{\kk-1}$. When $w_1$ is forced to end in $s_0$, then by Lemma~\ref{wred}, we know that $j_{w_1}^0 \in J_P[(j^0_{\pi_P(t_{-\theta})})^{-1}]$. Therefore, since $\len(w_1) < \len(w)$, we know that $j_{w_1}^0 \in \ker(p)$ by definition of $w$. Recall from Eq.~\eqref{jProduct} in the proof of Lemma~\ref{wred} that 
\[j_{w_1}^0 j_{\pi_P(t_{-\theta})}^0 = j_{w}^0 +\sum_vc_{w_1}^{v} j_{v\pi_P(t_{-\theta})}^0,\] where $v$ is forced to end in either $s_{\kk-1}$
or $s_{\kk+1}$. This means that $j_{v \pi_P(t_{-\theta})}^0 \in \ker(p)$ because it reduces to a previous case argued above. Therefore, $j_w^0 \in \ker(p)$ in this third case as well.

We have reached a contradiction to the fact that $j_w^0 \notin \ker(p)$.  Therefore $J_P[j^0_{\pi_P(t_{-\theta})})^{-1}]\subseteq \ker(p)$, establishing that $\iso$ is an isomorphism.
\end{proof}

Theorem \ref{algiso} proves that the isomorphism established by Postnikov in \cite{P05} between the localizations of the subalgebra $\mathbb X$ of the affine nilTemperley-Lieb algebra and the quantum cohomology of the Grassmannian is really a direct consequence of the parabolic Peterson isomorphism.

\begin{cor} 
The map $\phi_{Po}$  from \cite{P05} defined in Theorem~\ref{PostIso} factors as
\begin{equation}
\phi_{Po} = T \circ \sd \circ \Psi_P \circ \iso^{-1}: \mathbb X \subset n\widehat{TL_{\kk n}}\longrightarrow QH^*(Gr(\kk,n))[q^{-1}],
\end{equation}
where $\iso$ is the isomorphism established in Theorem~\ref{algiso} and $I_P = I\backslash\{n-\kk\}$. \label{decomp}
\end{cor}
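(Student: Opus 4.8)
The plan is to prove the factorization by showing that $T \circ \sd \circ \Psi_P \circ \iso^{-1}$ and Postnikov's map $\phi_{Po}$ from Theorem~\ref{PostIso} are two $\ZZ$-algebra homomorphisms with the same source and the same target which agree on a generating set of the source; an algebra homomorphism being determined by its values on generators, this forces the two to coincide. First I would settle the bookkeeping. The parabolic fixed in the statement is $I_P = I \backslash \{n-\kk\}$, so Theorems~\ref{modpet} and~\ref{algiso} must be invoked with the roles of $\kk$ and $n-\kk$ swapped. Read this way, Theorem~\ref{algiso} gives an isomorphism $\iso : (H_*(\mathcal{G}r)/J_P)[(\overline{j}^0_{\pi_P(t_{-\theta})})^{-1}] \to \mathbb X$ with $\mathbb X$ a subalgebra of $\anTLkn$ (since $n-(n-\kk) = \kk$), and Theorem~\ref{modpet} gives $T \circ \sd \circ \Psi_P$ with target $QH^*(Gr(n-(n-\kk),n))[q^{-1}] = QH^*(Gr(\kk,n))[q^{-1}]$. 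Hence $T \circ \sd \circ \Psi_P \circ \iso^{-1}$ is a $\ZZ$-algebra isomorphism from $\mathbb X \subset \anTLkn$ to $QH^*(Gr(\kk,n))[q^{-1}]$, exactly the source and target appearing for $\phi_{Po}$ in Theorem~\ref{PostIso}.

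Next I would evaluate the composite on the algebra generators ${\bf \tilde h}^n_r$ and ${\bf \tilde e}^n_r$ of $\mathbb X$, for $1 \le r \le n-1$. By Theorem~\ref{algiso}, $\iso$ is induced by the natural projection $\mathbb B \to \mathbb X$, so $\iso^{-1}({\bf \tilde h}^n_r)$ and $\iso^{-1}({\bf \tilde e}^n_r)$ are simply the images of ${\bf \tilde h}_r$ and ${\bf \tilde e}_r$ in the localized quotient $(H_*(\mathcal{G}r)/J_P)[(\overline{j}^0_{\pi_P(t_{-\theta})})^{-1}]$. Applying Theorem~\ref{modpet} (with $\kk$ replaced by $n-\kk$) then gives $T \circ \sd \circ \Psi_P \circ \iso^{-1}({\bf \tilde h}^n_r) = \sigma_{(r)}$ and $T \circ \sd \circ \Psi_P \circ \iso^{-1}({\bf \tilde e}^n_r) = \sigma_{(1^r)}$, which match $\phi_{Po}({\bf \tilde h}^n_r) = \sigma_{(r)}$ and $\phi_{Po}({\bf \tilde e}^n_r) = \sigma_{(1^r)}$ from Theorem~\ref{PostIso}. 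Since the central element ${\bf z}_\kk = {\bf \tilde e}^n_\kk \cdot {\bf \tilde h}^n_{n-\kk}$ is a product of generators, both maps automatically agree on it too, sending it to $q$ in accordance with the identity ${\bf \tilde e}_\kk \cdot {\bf \tilde h}_{n-\kk} = \overline{j}^0_{\pi_P(t_{-\theta})}$ recorded inside the proof of Theorem~\ref{modpet}. Agreement on the generators then yields $\phi_{Po} = T \circ \sd \circ \Psi_P \circ \iso^{-1}$.

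The main obstacle is not conceptual but one of bookkeeping: one must carefully track, at every stage of the composite, which Grassmannian ($Gr(\kk,n)$ versus $Gr(n-\kk,n)$) and which affine nilTemperley-Lieb algebra ($\anTLkn$ versus $n\widehat{TL_{n-\kk,n}}$) is in play, and verify that the substitution $\kk \leftrightarrow n-\kk$ threads consistently through the source, the target, and the formula ${\bf \tilde z}_\kk = {\bf \tilde e}_\kk \cdot {\bf \tilde h}_{n-\kk}$ governing the central element. It is also worth pausing to confirm that the family ${\bf \tilde h}^n_r$ alone and the family ${\bf \tilde e}^n_r$ alone generate the same subalgebra $\mathbb X$, which follows from the relation in Lemma~\ref{techlem} together with the invertibility of ${\bf z}_\kk$ in $\anTLkn$; this makes it harmless to check agreement on both families at once. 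Once these points are in place, the conclusion is immediate from the already-established Theorems~\ref{PostIso},~\ref{modpet}, and~\ref{algiso}.
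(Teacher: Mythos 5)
Your proposal is correct and follows essentially the same route as the paper: both arguments reduce the factorization to checking agreement on the generators ${\bf \tilde h}^n_r$, using that $\iso$ is induced by the natural projection (so $\iso^{-1}({\bf \tilde h}^n_r)$ is the class of ${\bf \tilde h}_r$, zero when $r>n-\kk$) and then quoting the generator images from Theorem~\ref{modpet} with $\kk$ and $n-\kk$ interchanged. Your additional verifications (the ${\bf \tilde e}^n_r$, the central element ${\bf z}_\kk$, and the $\kk\leftrightarrow n-\kk$ bookkeeping) are consistent with, and slightly more explicit than, the paper's proof.
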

\noindent Note that this result appears as Corollary 4.10 in the published version.

\begin{proof} 
Having already established the isomorphism, it suffices to check that the image of this composition has the same effect on the generators as $\phi_{Po}$. By the proof of Theorem~\ref{algiso}, if $r \le n-\kk$, then ${\bf \tilde h}_r = \iso^{-1}({\bf \tilde h}_r^n)$, which maps to $\sigma_{(r)}$ in the codomain by Theorem~\ref{modpet}. Otherwise, ${\bf \tilde h}_r^n = {\bf \tilde h}_r = 0$. Since the ${\bf \tilde h}_r^n$ generate $\anTLkn$ and the $\sigma_{(r)}$ generate $QH^*(Gr(\kk,n))$, this concludes the proof.
\end{proof}

\begin{rmk}
We conclude with a remark about the prominent recurring role of hook shapes throughout this work.  Recall that the critical element $\pi_P(t_{-\theta})$ around which the affine Fomin-Stanley algebra is localized was a hook shape.  Moreover, powers of this element in turn become powers of the quantum parameter $q$ under the Peterson isomorphism, illuminating the rim hook rule of \cite{BCFF}.  Further, we have also seen that ${\bf \tilde h}_r$ was mapped under the Peterson isomorphism to a hook shape, and that the Strange Duality of Theorem~\ref{SD} connects rows and columns to certain other hook shapes.  Theorem \ref{modpet} further suggests that working with the often preferred bases of row and column shapes requires passing through the hook shapes as an intermediate step in order for the bases on the affine and quantum sides to match.  In other papers which explore the non-commutative $k$-Schur functions using the combinatorics of the affine nilCoxeter algebra, such as \cite{BSZ}, bases indexed by hook shapes often play a similarly prominent role, suggesting that such bases are perhaps more naturally suited to working directly with the Peterson isomorphism.
\end{rmk}

{\small
\bibliography{CookmeyerMilicevicRefs}
\bibliographystyle{alpha}}

\end{document}